\documentclass[11pt]{amsart}
\usepackage[T1]{fontenc}
\usepackage{amssymb,amsmath}
\usepackage{microtype}
\usepackage{bbm}
\usepackage{bm}
\usepackage[]{graphicx}
\usepackage{color}
\usepackage{amssymb, epsfig}
\usepackage{enumitem}
\usepackage[toc,page]{appendix} 
\usepackage{hyperref}
\usepackage{enumitem}

\numberwithin{equation}{section}

\newcommand{\E}{\mathbb{E}}
\newcommand{\clr}{\mathcal{R}}
\newcommand{\PP}{\mathbb{P}}
\newcommand{\Lip}{\operatorname{Lip}}
\newcommand{\DD}{\mathbb{D}}
\newcommand{\NN}{\operatorname{N}}
\newcommand{\RR}{\mathbb{R}}
\newcommand{\ZZ}{\mathbb{Z}}

\renewcommand{\d }{\mathrm{d}}

\numberwithin{figure}{section}
\newtheorem{theorem}{Theorem}[section]
\newtheorem{lemma}[theorem]{Lemma}

\newtheorem{assumption}[theorem]{Assumption}
\newtheorem{corollary}[theorem]{Corollary}
\newtheorem{proposition}[theorem]{Proposition}
\newtheorem{remark}[theorem]{Remark}
\begin{document}

\title[ On the density of the supremum of nonlinear SPDEs ] {On the density of the supremum of nonlinear SPDEs}

 \author[Karali, Stavrianidi, Tzirakis, Zoubouloglou]{G. Karali$^{\dag, \ddag}$, A. Stavrianidi$^{\# \S}$, K. Tzirakis$^{\ddag *}$, P. Zoubouloglou$^{\# \S}$}

 \thanks
 {$^{\dag}$ Department of Mathematics, National and Kapodistrian
 University of Athens, Panepistimiopolis, 15784, Greece.}
 \thanks
 {$^{\ddag}$ Institute of Applied and Computational Mathematics,
 FORTH, GR--711 10 Heraklion, Greece.}
 \thanks
 {$^{*}$ Department of Mathematics and Applied Mathematics,
 University of Crete, GR--714 09 Heraklion, Greece.}
 \thanks
 {$^{\#}$ Department of Mathematics, University of M\"unster, Einsteinstr. 62, D-48149 M\"unster, Germany.}
 \thanks
 {$^{\S}$ Corresponding Authors.}
 \thanks{E-mails: gkarali@math.uoa.gr, alex.st@uni-muenster.de, kostas.tzirakis@gmail.com, p.zoubouloglou@uni-muenster.de}

\subjclass{}
%
%

\begin{abstract}
We study the one-dimensional stochastic partial differential equation
\begin{equation*}
\frac{\partial u}{\partial t}(t,x)
=
-\kappa \frac{\partial^4 u}{\partial x^4}(t,x)
+ \rho \frac{\partial^2 u}{\partial x^2}(t,x)
+ b(u(t,x))
+ \sigma(u(t,x))\, \dot W(t,x),
\end{equation*}
posed on a bounded spatial domain, where $u$ is understood in the random field sense and $\dot W(t,x)$ is space-time white noise. Depending on the value of $\kappa$, this equation includes the nonlinear stochastic heat equation with Dirichlet or Neumann boundary conditions, as well as the linearized stochastic Cahn–Hilliard equation with Neumann boundary conditions. We prove that the supremum of the solution admits a density with respect to the Lebesgue measure. Our approach is based on Malliavin calculus, and in particular on the version of the Bouleau--Hirsch criterion for suprema developed by Nualart and Vives. One of the main difficulties lies in the analysis of the argmax set of the solution and in showing that the Malliavin derivative is almost surely nondegenerate on this set. As part of our arguments, we establish H\"older continuity properties for the Malliavin derivative of the solution as an $L^2-$valued process in the regimes considered in this work.
\end{abstract}

\keywords{Stochastic partial differential equations, stochastic heat equation, Malliavin calculus, Radon-Nikodym derivative, supremum of random field}

\subjclass{Primary 60H15; Secondary 60H07, 60G60, 60H30, 35K55}
\maketitle

\pagestyle{myheadings}
\thispagestyle{plain}



%
\thispagestyle{plain}
%
%

\tableofcontents

\section{Introduction} \label{sbi}
\allowdisplaybreaks

We study the existence of a density for the supremum of solutions to nonlinear stochastic partial differential equations (SPDEs) in one spatial dimension. More precisely, we consider the SPDE
\begin{equation}\label{sCH} 
\frac{\partial u}{\partial t}(t,x)
=
-\kappa \frac{\partial^4 u}{\partial x^4}(t,x)
+ \rho \frac{\partial^2 u}{\partial x^2}(t,x)
+ b(u(t,x))
+ \sigma(u(t,x))\, \dot W(t,x),
\end{equation}
for $(t,x)\in [0,T]\times[0,1]$, where $\kappa,\rho\ge 0$ and $\kappa+\rho>0$. The coefficients $b:\RR\to\RR$ and $\sigma:\RR\to\RR$ are assumed to be Lipschitz and $\sigma$ is upper bounded and lower bounded by a positive constant (see Assumption~\ref{assum:b-s} below). We impose an initial condition $u_0\in C([0,1])$, together with additional assumptions that will be specified in Assumption~\ref{ass:uo-holder}. Finally, $\dot{W}(t,x)$ denotes a space-time white noise in the sense of Walsh \cite{W6}, defined on some probability space $(\Omega,\mathcal{F}, \PP)$.

Our goal is to study the existence of a density with respect to Lebesgue measure, for the supremum of the solution to \eqref{sCH}. We consider three different regimes, corresponding to different choices of the parameters $\kappa$ and $\rho$ and to different boundary conditions. These will be referred to throughout as \emph{Regimes} \ref{:dirichlet}, \ref{case:neumann} and \ref{case:fourthorder}:
\begin{enumerate}[label=(\roman*)]
\item \label{:dirichlet}
$\kappa=0$, corresponding to the stochastic heat equation with Dirichlet boundary conditions
\[
u(t,0)=u(t,1)=0,\qquad t\in[0,T].
\]
For consistency, this requires $u_0(0)=u_0(1)=0$. We may take $\rho=1$ without loss of generality. We denote by $G^{\operatorname{D}}$ the associated Dirichlet heat kernel:
\begin{equation}\label{eq:def-GD}
G_t^{\operatorname{D}}(x,y)
=
2\sum_{k=1}^{\infty} e^{-k^2\pi^2 t}\sin(k\pi x)\sin(k\pi y).
\end{equation}

\item \label{case:neumann}
$\kappa=0$, corresponding to the stochastic heat equation with Neumann boundary conditions
\[
\frac{\partial u}{\partial x}(t,0) = \frac{\partial u}{\partial x}(t,1) =0,\qquad t\in[0,T].
\]
Again, we take $\rho=1$ without loss of generality and denote by $G^{\operatorname{N}}$ the associated Neumann heat kernel:
\begin{equation}\label{eq:neumann-kernel}
G_t^{\operatorname{N}}(x,y)
=
1+2\sum_{k=1}^{\infty} e^{-k^2\pi^2 t}\cos(k\pi x)\cos(k\pi y).
\end{equation}

\item \label{case:fourthorder}
$\kappa>0$, corresponding to a semilinear fourth-order equation with Neumann boundary conditions
\[
\frac{\partial u}{\partial x}(t,0) = \frac{\partial u}{\partial x}(t,1) = \frac{\partial^3 u}{\partial x^3}(t,0) = \frac{\partial^3 u}{\partial x^3}(t,1) = 0, \qquad t\in[0,T].
\]
In this case, we may assume $\kappa=1$ while keeping a general $\rho\ge0$. The Green's function associated with the operator $-\partial_x^4+\rho\partial_x^2$ on $[0,1]$ with these boundary conditions is given by
\begin{equation}\label{eq:H-def}
H_t(x,y)
=
1+2\sum_{k=1}^{\infty}
e^{-(k^4\pi^4+\rho k^2\pi^2)t}
\cos(k\pi x)\cos(k\pi y).
\end{equation}
This regime includes the linearized Cahn--Hilliard equation.
\end{enumerate}

The mild solution to \eqref{sCH} is given by
\begin{equation}\label{weakCH}
\begin{split}
u(t,x)
={}&
\int_0^1 \mathcal{G}_t(x,y)u_0(y)\,dy
+
\int_0^t \int_0^1 \mathcal{G}_{t-s}(x,y)b(u(s,y))\,dy\,ds
\\
&\quad
+\int_0^t\int_0^1 \mathcal{G}_{t-s}(x,y)\sigma(u(s,y))\,W(dy,ds),
\end{split}
\end{equation}
where $\mathcal{G}_t(x,y)$ denotes the corresponding Green's function, that is, $\mathcal{G}=G^{\operatorname{D}}, G^{\operatorname{N}}$ or $H$.

\subsection{Main results and assumptions}\label{subsec:main-results}

We now state the assumptions on the drift and diffusion coefficients appearing in \eqref{sCH}.

\begin{assumption}\label{assum:b-s}
The following conditions hold:
\begin{enumerate}
\item The function $b:\RR\to\RR$ is Lipschitz continuous.
\item The function $\sigma:\RR\to\RR$ is Lipschitz continuous, bounded, and uniformly elliptic, in the sense that there exists a constant $C_\sigma>1$ such that
\[
\frac{1}{C_\sigma}\le \sigma(x) \le C_\sigma,
\qquad x\in\RR.
\]
\end{enumerate}
\end{assumption}

Assumption~\ref{assum:b-s} is already sufficient to establish one of our main results, namely the existence of a density for the supremum when the latter is taken over an arbitrary nonempty compact subset of $(0,T]\times(0,1)$. In order to extend this result to the full set $[0,T]\times[0,1]$ in \emph{Regimes} \ref{:dirichlet}, \ref{case:neumann}, we impose the following local condition on the initial datum $u_0$.

\begin{assumption}\label{ass:uo-holder}
The function $u_0:[0,1]\to\RR$ is locally H\"older continuous of order $\alpha>\frac12$ near a maximizer. More precisely, there exist a maximizer $x^*\in[0,1]$, constants $\alpha>\frac12$, $C_0>0$, and $r_0>0$ such that
\begin{equation}\label{eq:local-holder}
\sup_{y, z \in (x^{*}-r_0, x^{*}+r_0) \cap [0,1]} \frac{|u_0(z)-u_0(y)|}{|y-z|^\alpha}\le C_0.
\end{equation}
\end{assumption}

Observe that Assumption~\ref{ass:uo-holder} is automatically satisfied, for instance, when $u_0\equiv 0$, which is the initial condition considered in the important work \cite{dalangpu}.

We are now ready to state our first main result, which concerns the case $\kappa=0$, that is, the stochastic heat equation. Throughout the paper, we denote by $\lambda$ the Lebesgue measure on $\RR$.

\begin{theorem}\label{maintheorem}
Let $u$ be the solution to \eqref{sCH} with $\kappa=0$ and $\rho=1$, subject to the boundary conditions of either Regime~\ref{:dirichlet} or Regime~\ref{case:neumann}. Suppose that Assumption~\ref{assum:b-s} holds. Then
\begin{equation}\label{eq:sup-u-d12}
\sup_{(t,x)\in[0,T]\times[0,1]} u(t,x)\in \mathbb\mathbb{D}^{1,2},
\end{equation}
and the law of $\sup_{(t,x)\in K}u(t,x)$ is absolutely continuous with respect to $\lambda$ for every nonempty compact set $K\subset(0,T]\times(0,1)$ in Regime \ref{:dirichlet} or $K\subset(0,T]\times [0,1]$ in Regime \ref{case:neumann}. If, in addition, Assumption~\ref{ass:uo-holder} holds, then the law of
\[
\sup_{(t,x)\in[0,T]\times[0,1]} u(t,x)
\]
is absolutely continuous with respect to $\lambda$.
\end{theorem}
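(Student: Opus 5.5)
The plan is to apply the Nualart--Vives version of the Bouleau--Hirsch criterion for suprema. For a continuous random field $\{u(t,x)\}$ on a compact set $K$, with $u(t,x)\in\DD^{1,2}$, suppose the $H$-valued field $(t,x)\mapsto Du(t,x)$ has a continuous version and $\E\sup_K\|Du(t,x)\|_H^2<\infty$. Then $M=\sup_K u\in\DD^{1,2}$ and $DM=Du(\tau)$ on the event $\{\tau\in K \text{ is the a.s. unique point where the sup is attained}\}$. More generally, it suffices to know that $\|Du(t,x)\|_H>0$ at every argmax point, in the localized form: if for every $(t,x)$ in an open cover of $K$ there is a neighbourhood $U$ with $\|Du\|_H>0$ on $U$ a.s., then the law of $M$ is absolutely continuous.

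\textbf{Step 1: regularity of $Du$.} By standard results (Bally--Pardoux, Nualart) under Assumption~\ref{assum:b-s}, $u(t,x)\in\DD^{1,2}$ and
\[
D_{s,y}u(t,x)=\mathcal{G}_{t-s}(x,y)\sigma(u(s,y))+\int_s^t\!\!\int_0^1\mathcal{G}_{t-r}(x,z)B_{r,z}D_{s,y}u(r,z)\,dz\,dr+\int_s^t\!\!\int_0^1\mathcal{G}_{t-r}(x,z)\Sigma_{r,z}D_{s,y}u(r,z)\,W(dz,dr),
\]
where $B$ and $\Sigma$ are bounded adapted processes coming from the Lipschitz coefficients. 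I would estimate
\[
\E\|Du(t,x)-Du(t',x')\|_{L^2([0,T]\times[0,1])}^p\le C\,(|t-t'|^{1/4}+|x-x'|^{1/2})^{p}
\]
by running the usual Green-function increment bounds, namely $\int_0^t\!\int|\mathcal{G}_{t-s}(x,y)-\mathcal{G}_{t'-s}(x',y)|^2\,dy\,ds\lesssim |t-t'|^{1/2}+|x-x'|$, inside a Gronwall argument that uses Burkholder's inequality for Hilbert-valued martingales. Kolmogorov's criterion then gives a continuous $L^2$-valued version and $\E\sup\|Du\|^2<\infty$. This proves \eqref{eq:sup-u-d12}.

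\textbf{Step 2: nondegeneracy on compact $K$.} Fix $(t_0,x_0)\in K$ with $t_0>0$, and take a small $\delta<t_0/2$. I restrict the norm to $s\in[t-\delta,t]$ and write $\|Du(t,x)\|_H^2\ge I_1-I_2$, where
\[
I_1=\tfrac12\int_{t-\delta}^t\int_0^1\mathcal{G}_{t-s}(x,y)^2\sigma(u(s,y))^2\,dy\,ds\ge c\,C_\sigma^{-2}\delta^{1/2}
\]
uniformly in $x\in[0,1]$ (Neumann) or in $x$ in compacts of $(0,1)$ (Dirichlet), and $I_2$ collects the remainder terms. The bound $\E\sup_{(t,x)\in U}I_2^p\le C\delta^{p}$ should hold with a supremum over a small neighbourhood $U$, again via Kolmogorov and Step 1. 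Then $\PP(\inf_U\|Du\|_H^2<\epsilon)\le \PP(\sup_U I_2>c\delta^{1/2}-\epsilon)$, and choosing $\delta=\delta(\epsilon)$ makes this tend to $0$. Hence $\inf_U\|Du\|>0$ a.s. Covering $K$ by finitely many such $U$ and applying the criterion (Nualart--Vives, Prop.~2.1.11 type) gives a density for $\sup_K u$.

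\textbf{Step 3: full domain, the main obstacle.} With $K=[0,T]\times[0,1]$ the argmax might lie at $t=0$, where $Du(0,\cdot)=0$, or on the Dirichlet boundary, where $u=0$. The Dirichlet boundary is harmless: there $u\equiv0$, and the sup exceeds $0$ a.s. by Step 2 applied near the interior. For $t=0$, I would show that a.s. the argmax avoids $\{0\}\times[0,1]$. That is, for $M_0=\max u_0=u_0(x^*)$, I would show $\PP(\sup_{t>0}u(t,x)>M_0 \text{ near } t=0)=1$. Using Assumption~\ref{ass:uo-holder} with $\alpha>1/2$, the deterministic part $\int\mathcal{G}_t(x^*,y)u_0(y)\,dy\ge M_0-Ct^{\alpha/2}$, and the drift term is $O(t)$. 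Meanwhile the stochastic integral at $(t,x^*)$ has fluctuations of order $t^{1/4}$ with Gaussian-like lower tails, since $\sigma\ge C_\sigma^{-1}$. Because $\alpha/2>1/4$, a law-of-the-iterated-logarithm / 0-1 argument along $t_n\downarrow0$ (independent increments of white noise on disjoint time slabs, with conditional variance bounds) yields $u(t_n,x^*)>M_0$ for some $n$ a.s. So the sup is attained in $(0,T]\times[0,1]$, and in fact on some $K_n=[1/n,T]\times\ldots$. Then $\{\sup u\in A\}\subset\bigcup_n\{\sup_{K_n}u\in A\}$ with $\lambda(A)=0$, and the claim follows from Step 2. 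I expect this small-time lower-tail argument to be the delicate step.
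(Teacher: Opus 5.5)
There is a genuine gap in Step~3 for the Dirichlet regime. Your claim that the supremum exceeds $0$ a.s.\ ``by Step~2 applied near the interior'' does not follow. Step~2 gives nondegeneracy of $Du$ on interior compacts, and hence absolutely continuous laws for each $\sup_{K_n}u$, but this says nothing about the sign of $\sup u$. For instance, it does not rule out $\sup_{K_n}u<0$ for all $n$ with $\sup_{K_n}u\uparrow 0$, i.e.\ the maximum being attained only on $\{x=0\}\cup\{x=1\}$.

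When $x^*\in(0,1)$ this is harmless, since your small-time argument already yields $\sup u>M_0\ge u_0(0)=0$ a.s. But Assumption~\ref{ass:uo-holder} allows the only regular maximizer to lie on the boundary, $x^*\in\{0,1\}$ (e.g.\ $u_0(x)=-\sin(\pi x)$). Then $M_0=0$ and your argument at $(t,x^*)$ collapses: $u(t,x^*)\equiv 0$, and the stochastic integral at $x^*$ has zero variance. In that case, excluding both $\{t=0\}$ and the lateral boundary from the argmax amounts to proving $\PP(\sup u>0)=1$, and nothing in your proposal does this.

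The missing idea (the paper's Proposition~\ref{dirichletmaximizeratboundary}) is to evaluate at a moving interior point $x(t)=t^{\theta}$ with $\frac{1}{4\alpha}<\theta<\frac12$.
Because $x(t)\ge 2\sqrt t$, the Dirichlet kernel still satisfies $\int_0^1 G^{\operatorname{D}}_r(x(t),y)^2\,dy\ge c\,r^{-1/2}$ for $r\le t$ (Lemma~\ref{lem:G-L2}(iii)). So the Gaussian part still has standard deviation $\gtrsim t^{1/4}$. Meanwhile, one-sided H\"older continuity at $0$ gives $|u_0(x(t))|+\bigl|\int_0^1G^{\operatorname{D}}_t(x(t),y)u_0(y)\,dy\bigr|\le Ct^{\alpha\theta}+Ce^{-ct^{2\theta-1}}=o(t^{1/4})$. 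Combining this with the drift and remainder bounds and the germ $0$--$1$ law gives $u(t_n,x(t_n))>0$ infinitely often a.s.

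The rest of your plan is viable:
\begin{itemize}
\item Step~1 is fine. With Hilbert-valued BDG and $\sup_{(\theta,r)}\E\|Du(\theta,r)\|^p_{\mathcal H}<\infty$, no Gronwall step is even needed.
\item Your countable-union reduction to $\sup_{K_n}u$ is equivalent to the paper's decomposition $\mathcal L_3=\bigcup_n A_n$.
\item Step~2 is a legitimate alternative to the paper's route (pointwise small-ball estimate, H\"older continuity of $\gamma$, and a grid/Borel--Cantelli argument), provided you actually prove the uniform remainder bound. A clean way is to interpolate the pointwise bound $\E I_2^q\le C\delta^q$ with a $\delta$-free increment bound for $I_2$ and apply Kolmogorov. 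This gives $\E\sup_U I_2^p\le C\delta^{p(1-\eta)}$ for any fixed $\eta\in(0,\frac12)$ and $p$ large. That suffices, since you only need $\PP(\sup_U I_2>c\delta^{1/2}/2)\to0$.
\end{itemize}
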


The statement \eqref{eq:sup-u-d12} is proved in Section~\ref{subsec:second-cond-k=0}; see in particular Theorem~\ref{thm:nualart-criterion-existence}. The absolute continuity of the supremum over compact subsets of $(0,T)\times(0,1)$ follows from Sections~\ref{subsubsec:gammaholder}, \ref{subsubsec:k=0-small-ball}, and \ref{subsubsec:k=0sup-compacts}. The extension to the full domain $[0,T]\times[0,1]$ is established in Section~\ref{subsubsec:k=0-whole-space}.

For Regime~\ref{case:fourthorder}, we first present an assumption that allows us to pass to the full space $[0,T] \times [0,1]$, analogous to Assumption \ref{ass:uo-holder}.

\begin{assumption}\label{ass:u0-fourth}
The function $u_0:[0,1]\to\RR$ is locally in $C^{1,\alpha}$ near a maximizer with $\alpha > \frac{1}{2}$, i.e., there exists a maximizer $x^*\in[0,1]$, constants $C_0>0$, and $r_0>0$ such that $u_0$ is differentiable for all $y \in [0,1] \cap (x^*-r_0,x^*+r_0)$ and, moreover,
\begin{equation}\label{eq:local-holder-4th}
\sup_{y,z \in (x^{*}-r_0, x^{*}+r_0) \cap [0,1]} \frac{|u'_0(y)-u'_0(z)|}{|y-z|^\alpha} \le C_0.
\end{equation}
Moreover, if $x^*  \in \{0,1\}$, then $u_0'(x^*) = 0$, where $u_0'(x)$ denotes the one-sided derivative.
\end{assumption}


We obtain the following analogous statement for Regime \ref{case:fourthorder}.

\begin{theorem}\label{thm:4th-order}
Let $u$ be the solution to \eqref{sCH} with $\kappa=1$ and $\rho\ge 0$, subject to the boundary conditions of Regime~\ref{case:fourthorder}. Suppose that Assumption~\ref{assum:b-s} holds. Then
\[
\sup_{(t,x)\in[0,T]\times[0,1]} u(t,x)\in \mathbb\mathbb{D}^{1,2},
\]
and, for every nonempty compact set $K\subset(0,T]\times[0,1]$, the law of $\sup_{(t,x)\in K}u(t,x)$ is absolutely continuous with respect to $\lambda$. If, in addition, Assumption~\ref{ass:u0-fourth} holds, then the law of
\[
\sup_{(t,x)\in[0,T]\times[0,1]} u(t,x)
\]
is absolutely continuous with respect to $\lambda$.
\end{theorem}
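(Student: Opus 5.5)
The proof will follow the same three-part architecture as Theorem~\ref{maintheorem}, with the heat kernels replaced by $H$. Throughout, I use the Nualart--Vives version of the Bouleau--Hirsch criterion for suprema. Let $K$ be a compact set and $M_K=\sup_K u$. Suppose the following hold:
\begin{itemize}
\item[(a)] $u$ has continuous paths and $\E\sup_K|u|^2<\infty$;
\item[(b)] $u(t,x)\in\mathbb{D}^{1,2}$, and $(t,x)\mapsto Du(t,x)$ admits a continuous version in $L^2([0,T]\times[0,1])$ with $\E\sup_K\|Du(t,x)\|_{L^2}^2<\infty$;
\item[(c)] almost surely, $Du(t,x)\neq 0$ for every $(t,x)$ in the argmax set of $u$ on $K$.
\end{itemize}
Then $M_K\in\mathbb{D}^{1,2}$ and its law is absolutely continuous.

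\textbf{Membership $M\in\mathbb{D}^{1,2}$.} Take $K=[0,T]\times[0,1]$. Standard estimates for $H$ give $\int_0^t\int_0^1 H_s(x,y)^2\,dy\,ds\lesssim t^{3/4}$. They also give increments
\[
\int_0^T\int_0^1 |H_{t-s}(x,y)-H_{t'-s}(x',y)|^2\,dy\,ds\lesssim |x-x'|^{2\gamma_1}+|t-t'|^{2\gamma_2},
\]
with $\gamma_1<1$ and $\gamma_2<3/8$. With these, continuity of $u$ and moment bounds follow from Kolmogorov's criterion, which gives (a). For (b), differentiate the mild equation:
\[
D_{s,y}u(t,x)=H_{t-s}(x,y)\sigma(u(s,y))+\int_s^t\!\!\int_0^1 H_{t-r}(x,z)\bigl[b'(u)D_{s,y}u\,dz\,dr+\sigma'(u)D_{s,y}u\,W(dz,dr)\bigr],
\]
using the usual Lipschitz smoothing if $b,\sigma$ are only Lipschitz. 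The Hölder continuity of $(t,x)\mapsto Du(t,x)\in L^2$ comes from the same kernel increment bounds applied to the leading term. A Gronwall argument in the $L^p(\Omega;L^2)$ norm handles the remaining terms, and Kolmogorov then yields the sup bound. This gives $M\in\mathbb{D}^{1,2}$.

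\textbf{Compact $K\subset(0,T]\times[0,1]$.} Fix $(t,x)\in K$, so $t\ge t_0>0$. For small $\epsilon$, restrict $s\in(t-\epsilon,t)$. The leading term satisfies
\[
\int_{t-\epsilon}^t\int_0^1 H_{t-s}(x,y)^2\sigma^2\,dy\,ds\ge C_\sigma^{-2}c\,\epsilon^{3/4}.
\]
This holds uniformly in $x\in[0,1]$, including the boundary, because the Neumann images only add positive mass at short times, which is why no interior restriction is needed here. The remainder terms are $O(\epsilon^{3/4+\delta})$ in $L^p$, uniformly over $K$ after a chaining or Kolmogorov step. Hence $\inf_{(t,x)\in K}\|Du(t,x)\|_{L^2([t-\epsilon,t]\times[0,1])}>0$ almost surely. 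This is stronger than (c), so the density follows. The uniformity over $K$ requires a quantitative small-ball estimate, $P(\inf_K\|Du\|\le\eta)\to0$, obtained by covering $K$ with a grid and using the Hölder modulus of $Du$.

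\textbf{Full domain $[0,T]\times[0,1]$.} This is the main obstacle, because at $t=0$ we have $Du(0,\cdot)=0$. If the argmax meets $\{t=0\}$ with positive probability, criterion (c) fails. The plan is to show that $\PP(\sup_{t\le\delta,x}u(t,x)\le u(0,x^*))\to0$ as $\delta\to0$, i.e.\ that the supremum is almost surely attained at some $t>0$. Then $M=\lim_n\sup_{[1/n,T]\times[0,1]}u$ on events of probability tending to one, and the previous step applies to each compact $[1/n,T]\times[0,1]$.

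To prove this, use Assumption~\ref{ass:u0-fourth}. Near $x^*$, $u_0(y)\ge u_0(x^*)+u_0'(x^*)(y-x^*)-C|y-x^*|^{1+\alpha}$, and at the boundary we have $u_0'(x^*)=0$. I expect $H_t u_0(x^*)-u_0(x^*)=o(t^{3/8})$, since $H$ has spatial scale $t^{1/4}$ and $(1+\alpha)/4>3/8$ exactly when $\alpha>1/2$. By contrast, the stochastic convolution at $(t,x^*)$ has standard deviation of order $t^{3/8}$, and the drift term is $O(t)$. A law of the iterated logarithm, or a Blumenthal 0--1 type argument along $t_n\downarrow0$, then shows the noise term exceeds $c\,t_n^{3/8}$ infinitely often almost surely. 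Hence $u(t_n,x^*)>u_0(x^*)=\sup u_0$ for some small $t_n$. The delicate points are the near-independence of increments across scales and the nonconstant $\sigma$. I would handle both by freezing $\sigma$ at $u_0(x^*)$ and treating the difference as a higher-order error.
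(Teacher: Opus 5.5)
Your architecture is the paper's: the Nualart--Vives criterion; Kolmogorov's theorem for $Du$ as an $L^2$-valued field; on compacts $K\subset(0,T]\times[0,1]$, the split $\gamma\ge\tfrac12\mathcal{R}_1-\mathcal{R}_2$ with $\mathcal{R}_1\gtrsim\varepsilon^{3/4}$ and $\mathcal{R}_2=O(\varepsilon^{5/4})$ in every $L^q$, followed by a grid and Borel--Cantelli argument; and at $t=0$, the comparison $|D_1(t)|\lesssim t^{(1+\alpha)/4}=o(t^{3/8})$ against a Gaussian term of size $t^{3/8}$, combined with the germ $0$--$1$ law. Two of your variants are harmless. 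Your closing step ($M=\sup_{[1/n,T]\times[0,1]}u$ on events of probability tending to one) is equivalent to the paper's splitting of the argmax set into $\{t=0\}$ and $\{t>0\}$. Likewise, an $L^p(\Omega;L^2)$ Gronwall bound with the Hilbert-valued BDG inequality can replace Lemmas~\ref{lmalden} and~\ref{lemmagdif}.

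Two steps need repair.

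\textbf{The lower bound on $\int_0^\varepsilon\int_0^1 H_r(x,y)^2\,dy\,dr$.} The uniform-in-$x$ bound $\gtrsim\varepsilon^{3/4}$ cannot be justified by saying the Neumann images add positive mass. The kernel $g_t^\rho$ changes sign, so reflected terms can partially cancel the direct one. Instead use $\int_0^1 H_r(x,y)^2\,dy=1+2\sum_{k\ge1}e^{-2(k^4\pi^4+\rho k^2\pi^2)r}\cos^2(k\pi x)$, together with $\sum_{k\le N}\cos^2(k\pi x)\ge cN$ uniformly in $x$, where $N\approx r^{-1/4}$. On $[1/N,1-1/N]$ this is proved as in Lemma~\ref{lem:G-L2}(iii); near the endpoints, $\cos^2(k\pi x)\ge\tfrac12$ for $k\le N/4$. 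Alternatively, cite (3.24) of \cite{AFK} as the paper does.

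\textbf{The ``Hence'' at $t=0$.} Applying the $0$--$1$ law to the Gaussian term alone gives $Z(t_n)\ge c\,t_n^{3/8}$ infinitely often. However, the frozen-$\sigma$ remainder $R(t_n)$ is controlled only in $L^2$. Nothing yet prevents it from being large at exactly those random indices, so the ``Hence'' does not follow as written. The paper avoids this by applying the $0$--$1$ law directly to $\{u(t_n,x^*)>u_0(x^*)\ \text{i.o.}\}$. It uses the joint bound $\PP\bigl(Z(t)\ge c_1t^{3/8},\,|R(t)|\le\tfrac14c_1t^{3/8},\,|D(t)|\le\tfrac14c_1t^{3/8}\bigr)\ge c_2/2$ together with $\PP(\limsup A_n)\ge\limsup\PP(A_n)$. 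This also makes the law of the iterated logarithm and any independence across scales unnecessary. Alternatively, take $t_n=2^{-n}$. Since $\E R(t)^2\lesssim t$, you get $\sum_n\PP(|R(t_n)|>\epsilon t_n^{3/8})\lesssim\epsilon^{-2}\sum_n 2^{-n/4}<\infty$. Borel--Cantelli then gives $R(t_n)=o(t_n^{3/8})$ almost surely along that sequence, and your argument closes.
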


The proof of Theorem~\ref{thm:4th-order} closely follows that of Theorem~\ref{maintheorem}. We point out the main differences in Section~\ref{sec:kappa-neq-0}.

\begin{remark}

We consider Neumann boundary conditions for the linearized Cahn–Hilliard equation because they are the natural boundary conditions for this model. The Cahn–Hilliard dynamics is a mass-conserving gradient flow, and on a bounded interval its standard formulation is coupled with no-flux conditions, which in the linearized equation reduce to the setting of Regime \ref{case:fourthorder}.

\end{remark}

\subsection{Literature review and proof technique} \label{subsec:lit-review}

By now, a substantial literature has developed on the existence and quantitative properties of densities for pointwise evaluations of random-field solutions to SPDEs. For Regimes \ref{:dirichlet} and \ref{case:neumann}, that is, for the stochastic heat equation under Dirichlet or Neumann boundary conditions, this line of investigation was initiated in \cite{pardouxzhang}, where it was shown that for each fixed $(t,x)\in(0,T]\times(0,1)$ (respectively, $(t,x) \in (0,T]\times [0,1]$ for Neumann conditions), the random variable $u(t,x)$ is Malliavin differentiable and admits a density with respect to Lebesgue measure. Subsequent works established that this density is in fact smooth and strictly positive away from the boundary; see \cite{muellernualart,ballypardoux}. These results were later extended to solutions of the stochastic heat equation with measure-valued initial data in \cite{ChenHuNualart2021RegularityPositivitySHE}.

Going beyond the stochastic heat equation, Malliavin differentiability and the existence of a density for pointwise evaluations of the stochastic Cahn--Hilliard equation were established in \cite{weber,CuiHong20AbsoluteContinuity}. See also \cite{AFK}, where the authors proved the existence of a density for pointwise evaluations of the stochastic Cahn--Hilliard/Allen--Cahn combined model. We note that \eqref{sCH} under Regime \ref{case:fourthorder}, that is, when $\kappa>0$, corresponds to the linearized Cahn-Hilliard for appropriate $b,\sigma$. In particular, a straightforward adaptation of the argument in \cite{AFK} yields that, for every fixed $(t,x)\in(0,T]\times[0,1]$, the corresponding solution $u(t,x)$ also admits a density.

A natural question, once the density of pointwise evaluations has been understood, is whether analogous results hold for the supremum of the underlying stochastic process or random field. In general, obtaining a closed-form expression for the density of the supremum is an almost impossible task. Nevertheless, even qualitative or quantitative information on that density, such as smoothness, or upper and lower bounds, can be useful in applications, for instance in estimating exit probabilities from prescribed domains. 
At the same time, even proving the mere existence of a density for the supremum is typically a delicate problem.

For classical processes, some results are available. The density of the supremum of Brownian motion is explicitly known. For fractional Brownian motion, the existence of a density for the supremum was established in \cite{nualartvives}, and its smoothness was later proved in \cite{zadinualart}. In the context of diffusion processes, Hayashi and Kohatsu-Higa showed in \cite{HayashiKohatsuHiga2013} that, for a multidimensional diffusion with smooth, uniformly elliptic, and commutative diffusion vector fields, the random vector
\[
(X^1_{\theta_1},\dots,X^d_{\theta_1}),
\]
where $\theta_1$ is the almost surely unique time at which $X^1$ attains its maximum on $[0,T]$, admits a smooth density. In particular, this implies that $\sup_{0\le t\le T} X^1_t$ itself has a smooth density.

The question becomes substantially more delicate for random fields. The existence of a smooth density for the supremum of the Brownian sheet over a bounded rectangle was established in \cite{floritnualart}. Perhaps the work closest to ours is that of Dalang and Pu \cite{dalangpu}, who derived estimates for the density of the supremum of the linear stochastic heat equation, that is, \eqref{sCH} in the case $\kappa=b=0$ and $\sigma\equiv 1$. In that setting, the solution is a Gaussian random field. Moreover, the initial condition in \cite{dalangpu} is $u_0\equiv 0$, and therefore Assumption \ref{ass:uo-holder} is satisfied. Since \cite{dalangpu} obtains stronger conclusions in a less general setting, the methods used there are substantially different from ours. In particular, the motivation for studying the density of the supremum of the solution in \cite{dalangpu} stems from obtaining upper bounds for hitting probabilities for the solution to the linear stochastic heat equation; see also 
\cite{DalangKhoshnevisanNualart2007Additive,DalangKhoshnevisanNualart2009Multiplicative} for related works on hitting probabilities. As already noted in \cite[Remark 1.6(2)]{dalangpu}, the nonlinear case is considerably more challenging. It is unclear whether estimates on the density as in \cite{dalangpu} can be obtained for the nonlinear equation by extending the tools used herein. To the best of our knowledge, the present paper provides the first result on the existence of a density for the supremum of solutions to nonlinear SPDEs.

As in all of the works mentioned above, Malliavin calculus lies at the heart of our proofs. In particular, the Bouleau--Hirsch criterion for the existence of a density was adapted in \cite{nualartvives} to the case where the random variable is the supremum of a stochastic process or random field; we recall this criterion in Theorem~\ref{thm:nualart-criterion-existence} below.

To establish the existence of a density in our setting, we must verify two key conditions, namely Conditions (ii) and (iii) in Theorem~\ref{thm:nualart-criterion-existence}. The first of these requires suitable integrability of the supremum of the squared norm of the Malliavin derivative of the underlying random field, viewed as an element of the relevant Gaussian Hilbert space. The main technical difficulty comes from the fact that the supremum appears inside the expectation. We overcome this by proving a Kolmogorov-type continuity result for the Malliavin derivative. We thereby obtain the H\"older continuity of the Malliavin derivative as an $L^{2}(dsdy)$ process, with exponents matching those of the underlying field in the regimes considered here.

The second condition requires proving that the Malliavin derivative is nondegenerate on the $\arg\max$ set of the solution. This naturally leads us to study the argmax set associated with the random field $(t,x)\mapsto u(t,x)$. A standard strategy for verifying this condition is to prove that the maximizer is almost surely unique, and then approximate it by maxima over a dense countable subset; see, for instance, Lemmas 2.1.8 and 2.1.9 in \cite{Nua06book} for an implementation of this argument for the Brownian sheet. In our setting, however, establishing almost sure uniqueness of the maximizer appears to be a difficult problem in its own right. Existing criteria in the literature are largely restricted either to one-parameter processes (see, e.g., \cite{Pimentel2014,Adler1990,AdlerTaylor2007,Piterbarg1996}) or to Gaussian random fields (see, e.g., \cite{LopezPimentel2018} and the references discussed in \cite{dalangpu}), and therefore do not apply to our SPDEs. Moreover, in the regimes considered here, the initial condition is deterministic, which in particular forces, for example, $Du(0,x)=0$ for all $x\in[0,1]$. In addition, Dirichlet boundary conditions force $Du(t,0)=Du(t,1)=0$ for all $t \in [0,T]$. As a result, the verification of this condition is naturally reduced to two main tasks: first, proving that the Malliavin derivative is nondegenerate on every compact subset of the interior $(0,T)\times(0,1)$; and second, showing that the $\arg\max$ set of $u$ almost surely does not intersect $\{t=0\}$ or $\{x=0\}$ and $\{x=1\}$ in the case of Dirichlet boundary conditions.

A further difficulty, compared with most of the existing literature, is the lack of Gaussianity of the underlying random field. With the exception of \cite{HayashiKohatsuHiga2013}, the works discussed above concern Gaussian processes or Gaussian random fields. Although the process considered in \cite{HayashiKohatsuHiga2013} is not necessarily Gaussian, the authors impose additional structural assumptions in order to guarantee almost sure uniqueness of the maximizer; see also \cite{Nakatsu2016} for a related problem. In our setting, this absence of Gaussian structure makes several parts of the analysis substantially more involved.

\subsection{Notation} \label{subsec:notation}

Throughout the paper, we work with the jointly continuous version of the random field $\{u(t,x):(t,x)\in [0,T]\times[0,1]\}$.

We write $C$ and $c$ for positive constants whose value may change from line to line. When needed, subscripts indicate the dependence of a constant on the relevant parameters, such as $T$, $b$, $\sigma$, or $\rho$.

We write $\mathcal C([0,T])=\mathcal C([0,T];\RR)$ to be the space of continuous functions from $[0,T]$ to $\RR$, equipped with the usual norm $\|f\|_\infty:=\sup_{t\in[0,T]}|f(t)|$.

For $p\in[1,\infty]$, we denote by $L^p(0,1)$ the usual space on $(0,1)$, with norm $\|\cdot\|_{L^p}$. If $f:\RR \to\RR$, we denote by
\[
\Lip(f):=\sup_{x\neq y}\frac{|f(x)-f(y)|}{|x-y|}
\]
its Lipschitz constant, whenever $f$ is Lipschitz continuous.

We denote by $C_p^\infty(\RR^n)$ the class of all smooth functions $\varphi:\RR^n\to\RR$ such that $\varphi$ and all of its partial derivatives have at most polynomial growth at infinity.

We write
\begin{equation} \label{eq:heatkernelonR}
p_t(x,y)=p_t(x-y):=\frac{1}{\sqrt{4\pi t}}\exp\left(-\frac{|x-y|^2}{4t}\right),
\qquad t>0,\ \ x,y\in\RR,
\end{equation}
for the standard one-dimensional heat kernel on $\RR$. 

Moreover, for $t>0$ and $z\in\mathbb R$, we write
\begin{equation} \label{eq:gtrho}
    g_t^\rho(z):=\frac{1}{2\pi}\int_{\mathbb R}e^{-t(\xi^4+\rho\xi^2)}e^{iz\xi}\,d\xi, \qquad t >0,\ \ z \in \mathbb{R}
\end{equation}
for the fundamental solution to the fourth order equation $u_{t}=-u_{xxxx}+\rho u_{xx}$  with $\rho \geq 0$ on $\RR.$ 

Finally, for $a,b > 0$, define the beta function
\begin{equation*}
    \operatorname{B}(a,b) := \int_{0}^{1}x^{a-1}(1-x)^{b-1}dx =\int_{0}^{\infty}x^{a-1}(x+1)^{-(a+b)}dx, \quad a, b > 0.
\end{equation*}

\subsection{Organization of the paper} \label{subsec:organization}

The paper is organized as follows. In Section \ref{sec:prelim} we recall some preliminaries, including elements of the Malliavin calculus for our context (in Section \ref{subsec:malliavin}) and the anisotropic Kolmogorov criterion, as well as a criterion for the existence of density for the supremum of a continuous random field (in Section \ref{subsec:Kolmogorov}); we also recall some well-known technical lemmas for the Green's functions under considerations in Section \ref{subsec:green-estimates}. In Section \ref{sec:auxiliary}, we prove some auxiliary lemmas that are needed for the proof of Theorem \ref{maintheorem} (see Section \ref{subsec:kappa=0}) and Theorem \ref{thm:4th-order} (see Section \ref{subsec:kappa>0}). Finally, Sections \ref{sec:proof-kappa=0} and \ref{sec:kappa-neq-0} deal respectively with the proofs of Theorems \ref{maintheorem} and \ref{thm:4th-order}.

\section{Preliminaries}\label{sec:prelim}

\subsection{Elements of Malliavin calculus} \label{subsec:malliavin}
Let \(W=\{W(h),\,h\in\mathcal H\}\) be the isonormal Gaussian process associated with the space--time white noise, where $\mathcal H=L^2([0,T]\times[0,1];\RR)$. Let \(\mathcal S\) denote the family of smooth random variables of the form
\[
F=f\bigl(W(h_1),\ldots,W(h_n)\bigr),
\]
where \(n\geq 1\), \(h_1,\ldots,h_n\in\mathcal H\), and \(f\in C_p^\infty(\RR^n)\). For \(F\in\mathcal S\), its Malliavin derivative is the $\mathcal{H}$-valued stochastic process \(DF=(D_{t,x}F)_{(t,x)\in[0,T]\times[0,1]}\) defined by
\[
D_{t,x}F=\sum_{i=1}^n \partial_i f\bigl(W(h_1),\ldots,W(h_n)\bigr)h_i(t,x).
\]
More generally, for any integer \(k\geq 1\), the derivative of order \(k\), denoted by \(D^kF\), is defined in the usual way and takes values in \(\mathcal H^{\otimes k}\). Then, for \(k,p\geq 1\), the space \(\mathbb D^{k,p}\) of $k$-th order Malliavin differentiable functions with integrability of order $p$ is defined as the closure of \(\mathcal S\) under the norm
\[
\|F\|_{k,p}^p
=
\E\bigl[|F|^p\bigr]
+
\sum_{j=1}^k
\E\bigl[\|D^jF\|_{\mathcal H^{\otimes j}}^p\bigr].
\]

More precisely, the Malliavin derivative of the mild solution given in \eqref{weakCH} for $s<t$ is the solution to the evolution equation 
\begin{equation}\label{PPddfo3}
\begin{split}
 D_{s,y}u(t,x) = \,& \mathcal{G}_{t-s}(x,y)\sigma(u(s,y)) 
 \;+\;
\int_{s}^{t}\int_{0}^{1}\mathcal{G}_{t-\theta}(x,r)m(\theta,r)D_{s,y}u(\theta,r) dr d\theta  
\\
&\;+\; \int_{s}^{t}\int_{0}^{1} \mathcal{G}_{t-\theta}(x,r) \hat{m}(\theta,r)D_{s,y}u(\theta,r) W(dr,d \theta) ,
\end{split} 
\end{equation}
and
$$D_{s,y} u(t,x)=0,\qquad \mbox{for}\;\; s> t.$$
Proposition 1.2.4 from \cite{Nua06book} applies and gives us that there exist random functions $m,\hat{m}$ such that 
\begin{eqnarray*}\label{pod1}
D_{s,y}[b(u(\theta,r))]&=&m(\theta,r)D_{s,y}u(\theta,r) ,
\\
D_{s,y}[\sigma(u(\theta,r))]&=&\hat{m}(\theta,r)D_{s,y}u(\theta,r),
\end{eqnarray*}
with $|m(t,x)| \le \Lip(b)$, $|\hat{m}(t,x)| \le \Lip(\sigma) $ a.s. for any $t \in [0,T]$ and $x \in [0,1]$.

\subsection{Anisotropic Kolmogorov criterion and a Bouleau-Hirsch type criterion} \label{subsec:Kolmogorov}

In this section, we recall two key well-known theorems that will be used to prove our main results. 

The first result is a variant of the Kolmogorov-Chentsov theorem for SPDEs, often referred to as the anisotropic Kolmogorov criterion. The following is a simpler formulation of the one given in Theorem A.3.1 from \cite{dalangsole}, coupled with Remark A.3.2(a) therein.

\begin{theorem}[Anisotropic Kolmogorov criterion] \label{thm:kolmogorov}
Let $\{v(t,x): (t,x) \in [0,T] \times [0,1]\}$ be a stochastic process taking values in a Banach space $(\mathbb{B}, \|\cdot\|_{\mathbb{B}})$ and fix $\lambda_1, \lambda_2 \in (0,1]$. Suppose that there exists a constant $C>0$ and a $p > \frac{1}{\lambda_1} + \frac{1}{\lambda_2}$ such that
\begin{equation} \label{eq:Kolmogorov}
\E \|v(t,x) - v(s,y)\|_{\mathbb{B}}^p \le C \left( |t-s|^{\lambda_1} + |x-y|^{\lambda_2}  \right)^p, \quad s,t \in [0,T] , \; x,y \in [0,1].
\end{equation}
Then, there exists a version $\widetilde v$ of $v$ on $[0,T] \times [0,1]$ such that, a.s., $\widetilde v$ is $\mu_1-$H\"older continuous in time for all $\mu_1 < \lambda_1 \left(1 -  \frac{\lambda_1 + \lambda_2}{\lambda_1 \lambda_2 p} \right)$ and $\mu_2-$H\"older continuous in space for all $\mu_2 < \lambda_2 \left(1 -  \frac{\lambda_1 + \lambda_2}{\lambda_1 \lambda_2 p}\right)$.

In particular $\{\widetilde v(t,x): t \in [0,T], x \in [0,1]\}$ is continuous (in $\mathbb{B}$) and, moreover, if  $ \E\|\widetilde v (t_0,x_0)\|_{\mathbb{B}}^p < \infty$ for some $t_0 \in [0,T], x_0 \in [0,1]$, then 
\begin{equation}
\E \left( \sup_{t \in [0,T]} \sup_{x \in [0,1]} \|\widetilde v (t,x)\|_{\mathbb{B}}^p \right) < +\infty .
\end{equation}
\end{theorem}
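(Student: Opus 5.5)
This is the standard anisotropic Kolmogorov criterion, which the paper takes from Theorem A.3.1 and Remark A.3.2(a) of \cite{dalangsole}. To prove it directly, I would reduce to the isotropic Kolmogorov--Chentsov theorem after an anisotropic change of scale, and use a chaining argument over dyadic points for both the modulus of continuity and the moment bound on the supremum.

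\textbf{Dyadic grid and level-wise increments.} Set $\delta_1 = 1/\lambda_1$ and $\delta_2 = 1/\lambda_2$. At level $n$, consider the grid points $t = k 2^{-n\delta_1} T$ and $x = j 2^{-n\delta_2}$; where the exponents are non-integer, take the nearest dyadic refinements. Two neighbouring grid points differ by an increment whose right-hand side in \eqref{eq:Kolmogorov} is of order $2^{-np}$. The number of neighbouring pairs at level $n$ is of order $2^{n(\delta_1+\delta_2)}$. By Chebyshev's inequality and a union bound,
\[
\PP\Bigl(\max_{\text{neighbours at level } n} \|v(t,x)-v(s,y)\|_{\mathbb B} > 2^{-n\gamma}\Bigr) \le C\, 2^{n(\delta_1+\delta_2 + \gamma p - p)} .
\]
This bound is summable whenever $\gamma < 1 - (\delta_1+\delta_2)/p$. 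The admissible range of $\gamma$ is nonempty precisely because $p > \delta_1+\delta_2$.

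\textbf{Chaining and H\"older exponents.} By Borel--Cantelli, the level-$n$ increments are eventually bounded by $2^{-n\gamma}$. The usual chaining argument then gives
\[
\|v(t,x)-v(s,y)\|_{\mathbb B} \le C_\omega \bigl(|t-s|^{\lambda_1}+|x-y|^{\lambda_2}\bigr)^{\gamma}
\]
on dyadic points. Two points at time distance $|t-s|\approx 2^{-n\delta_1}$ are linked through grids of level at least $n$. This yields time-H\"older exponent $\lambda_1\gamma$ and space-H\"older exponent $\lambda_2\gamma$. Since $\gamma < 1-\frac{\lambda_1+\lambda_2}{\lambda_1\lambda_2 p}$, these are exactly the stated ranges for $\mu_1$ and $\mu_2$. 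Next, extend by uniform continuity to a continuous process $\widetilde v$ on all of $[0,T]\times[0,1]$. By \eqref{eq:Kolmogorov}, $v$ is continuous in probability, so $\widetilde v(t,x)=v(t,x)$ a.s. for each fixed $(t,x)$; that is, $\widetilde v$ is a version of $v$. Completeness of $\mathbb B$ is what makes the extension possible.

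\textbf{Moment bound on the supremum.} For the final claim I would use the Garsia--Rodemich--Rumsey route, or equivalently a quantitative version of the chaining step. Let $M = \sup_n 2^{n\gamma}\max_{\text{level } n}\|\Delta\|_{\mathbb B}$, the sup over levels of the rescaled maximal neighbouring increment. Then
\[
\E M^p \le \sum_n 2^{n\gamma p}\, \E \max_{\text{level } n}\|\Delta\|_{\mathbb B}^p \le C\sum_n 2^{n(\gamma p +\delta_1+\delta_2 - p)} < \infty .
\]
Since the domain has bounded diameter, chaining gives $\sup_{t,x}\|\widetilde v(t,x)-\widetilde v(t_0,x_0)\|_{\mathbb B} \le C M$. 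Combining this with $\E\|\widetilde v(t_0,x_0)\|^p_{\mathbb B}<\infty$ and the elementary inequality $(a+b)^p\le 2^{p-1}(a^p+b^p)$ gives the stated integrability of the supremum.

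The main obstacle is the bookkeeping of the anisotropic grid. One must ensure that chaining paths between points close in the quasi-metric $|t-s|^{\lambda_1}+|x-y|^{\lambda_2}$ use only steps of geometrically decreasing quasi-length. This is exactly what the scaling $\delta_i = 1/\lambda_i$ achieves. With that scaling in place, the argument reduces to the classical Kolmogorov proof in a space of effective dimension $\delta_1+\delta_2$.
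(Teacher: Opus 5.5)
The paper gives no argument for this statement. It imports it as a simplified special case of Theorem A.3.1 in \cite{dalangsole}, combined with Remark A.3.2(a) there. Your proposal is therefore a self-contained alternative rather than a reconstruction of an in-paper proof, and it is correct.

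Its virtue is that it makes the exponents transparent. Because $\lambda_1,\lambda_2\le 1$, the function $d_\lambda((t,x),(s,y)):=|t-s|^{\lambda_1}+|x-y|^{\lambda_2}$ is a genuine metric. With respect to it, $[0,T]\times[0,1]$ has entropy dimension $\frac{1}{\lambda_1}+\frac{1}{\lambda_2}$, and the hypothesis is a Kolmogorov condition of order $p$ in $d_\lambda$. Chaining then yields a $d_\lambda$-H\"older exponent $\gamma<1-\frac{\lambda_1+\lambda_2}{\lambda_1\lambda_2 p}$, which becomes $\lambda_i\gamma$ in each coordinate.

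To make the sketch airtight, three details need fixing.
\begin{enumerate}
\item Take the grids to be the nested ones with spacings $T2^{-\lceil n/\lambda_1\rceil}$ and $2^{-\lceil n/\lambda_2\rceil}$. Then neighbour increments still have $d_\lambda$-size $O(2^{-n})$, there are $O(2^{n(1/\lambda_1+1/\lambda_2)})$ of them, and consecutive approximations of a point are joined by a bounded number of level-$(n+1)$ neighbour steps.
\item In the supremum bound, take $\gamma>0$ strictly. The series for $\E M^p$ would tolerate $\gamma=0$, but the chaining sum $\sum_n 2^{-n\gamma}$ would not converge.
\item Obtain ``for all $\mu_i$ below the threshold'' by intersecting the almost sure events along a sequence $\gamma_k$ increasing to the critical value. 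This works because $\widetilde v$, the continuous extension from dyadic points, does not depend on $\gamma$.
\end{enumerate}

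The citation buys the authors brevity and a more general statement. Your route gives an explicit derivation of both the H\"older ranges and the $L^p$ bound on the supremum, which is the form the paper later uses with $\mathbb{B}=L^2([0,T]\times[0,1])$.
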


The following result is a combination of Propositions 2.1.10 and 2.1.11 of \cite{Nua06book}. For a random field $v$ on $K \subseteq [0,T] \times [0,1]$, where $K$ is compact, we define the argmax set to be
\begin{equation} \label{eq:arg-max-set}
    \mathcal{S}_{K} := \arg \max _{(t,x) \in K} v(t,x).
\end{equation}

The following theorem is a version of the Bouleau-Hirsch criterion (see, e.g., Theorem 7.2.1 \cite{Nualart_Nualart_2018}) for the Malliavin differentiability and the existence of density of the supremum of a random field; it was first developed in the work of Nualart and Vives \cite{nualartvives}. In its general formulation, this criterion can be found as Theorem 2.1.3 of \cite{Nua06book}, and we adjust it here to the case of SPDEs with spatial dimension $1$.

\begin{theorem}\label{thm:nualart-criterion-existence}
Let $\{v(t,x): (t,x) \in [0,T] \times [0,1]\}$ be a continuous stochastic process and let $K$ denote any nonempty compact subset of $[0,T]\times [0,1]$. Consider the following three conditions:
\begin{enumerate}[label=(\roman*), ref=(\roman*)]
\item \label{crit:nualart-1} $\E \left( \sup_{(t,x) \in K} |v(t,x)|^2 \right) < +\infty$ and $v(t,x) \in \mathbb{D}^{1,2}$ for all $(t,x) \in K$.
\item \label{crit:nualart-2} The process $\left\{D_{\cdot,\cdot} v(t,x), (t,x) \in K \right\}$  possesses a continuous version as an $L^{2}(ds \, dy)$ valued process and $\E \left( \sup_{(t,x) \in K} \int_0^T \int_0^1 |D_{s,y} v(t,x)|^2 dy ds \right) < +\infty$.
\item \label{crit:nualart-3} $\PP\left(\text{for all }(t,x) \in  \mathcal{S}_K: \int_0^T \int_0^1 |D_{s,y} v(t,x)|^2 dy ds  > 0 \right) = 1$, where $\mathcal{S}_K$ is defined in \eqref{eq:arg-max-set}.
\end{enumerate}
If $v$ satisfies conditions \ref{crit:nualart-1} and \ref{crit:nualart-2}, then the random variable $\sup_{(t,x) \in K} v(t,x) \in \mathbb{D}^{1,2}$. If, in addition, $v$ satisfies Condition \ref{crit:nualart-3}, then it is absolutely continuous with respect to $\lambda$. 
\end{theorem}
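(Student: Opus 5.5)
The theorem is a version of the Nualart--Vives criterion (Propositions 2.1.10 and 2.1.11 of \cite{Nua06book}), so I would prove it in two stages, by approximation and closability. Fix a countable dense set $\{(t_i,x_i)\}_{i\ge1}\subset K$ and put
\[
M_n:=\max_{1\le i\le n} v(t_i,x_i), \qquad M:=\sup_{(t,x)\in K} v(t,x).
\]
By continuity of $v$, $M_n\uparrow M$ almost surely. Condition \ref{crit:nualart-1} with dominated convergence then gives $M_n\to M$ in $L^2(\Omega)$, with dominating variable $\sup_K|v|^2$.

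For the first claim, I would show that each $M_n$ lies in $\mathbb{D}^{1,2}$. The maximum of finitely many $\mathbb{D}^{1,2}$ variables is a Lipschitz function of them, so Proposition 1.2.4 of \cite{Nua06book} applies. Explicitly, $DM_n=\sum_i \mathbbm{1}_{A_{n,i}} Dv(t_i,x_i)$, where $A_{n,i}$ is the event that $i$ is the first index achieving the maximum. This gives
\[
\|DM_n\|_{\mathcal H}^2\le \sup_{(t,x)\in K}\|Dv(t,x)\|_{\mathcal H}^2 .
\]
The right-hand side is integrable by \ref{crit:nualart-2}, so $\sup_n \E\|DM_n\|_{\mathcal H}^2<\infty$. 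Since $D$ is closed, with bounded norms and $M_n\to M$ in $L^2$, we get $M\in\mathbb{D}^{1,2}$, and $DM_n\to DM$ weakly in $L^2(\Omega;\mathcal H)$ along a subsequence (Lemma 1.2.3 of \cite{Nua06book}).

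For absolute continuity, I would first identify $DM$. Let $\tau_n$ be the (measurably selected) point $(t_i,x_i)$ attaining $M_n$. Every limit point of $\tau_n$ lies in $\mathcal S_K$ by continuity, and $DM_n=Dv(\tau_n)$. Using continuity of $(t,x)\mapsto Dv(t,x)$ in $\mathcal H$ from \ref{crit:nualart-2}, I would show that $DM$ lies almost surely in the closed convex hull of $\{Dv(t,x):(t,x)\in\mathcal S_K\}$. The key fact is that for $\varepsilon>0$, eventually $\tau_n\in\mathcal S_K^\varepsilon$, the $\varepsilon$-neighbourhood of the argmax set. This holds because $v(\tau_n)=M_n\to M$, $K$ is compact and $v$ is continuous.

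The hull description is not quite enough on its own: a convex combination of nonzero vectors could still vanish. So I would instead use the localized criterion in the style of Bouleau--Hirsch, namely Theorem 2.1.3 of \cite{Nua06book}. It suffices to show that $\|DM\|_{\mathcal H}>0$ almost surely on a family of events $\Omega_{m}$ covering $\Omega$ up to null sets. For rational open balls $B$ and rationals $\delta>0$, define
\[
\Omega_{B,\delta}:=\Bigl\{\mathcal S_K\subset B,\ \inf_{(t,x)\in \overline{B}\cap K}\|Dv(t,x)\|_{\mathcal H}>0,\ \sup_{K\setminus B}v\le M-\delta\Bigr\}.
\]
By \ref{crit:nualart-3}, continuity and compactness, these countably many events cover $\Omega$ almost surely. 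On such an event I would replace $M$ by the local supremum $M^B:=\sup_{\overline B\cap K}v$, which agrees with $M$ there, and apply the local property of $D$ to $M^B$.

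The main obstacle is that nonvanishing of the derivative at each maximizer does not by itself force $DM\neq0$. The standard fix is to work with a single direction. For this step I would follow Nualart--Vives and use the conditional form of Bouleau--Hirsch via the sub-$\sigma$-field generated by the supremum, or approximate by localizing further until the maximizer is essentially unique. In the worst case I would cite Theorem 2.1.3 of \cite{Nua06book} directly, since the statement asserts it is a direct specialization of that result.
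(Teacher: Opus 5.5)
Your first stage (the finite maxima $M_n$, the Lipschitz chain rule, the bound $\|DM_n\|_{\mathcal H}^2\le\sup_K\|Dv\|_{\mathcal H}^2$, and closability) is correct. It coincides with the proof of Proposition 2.1.10 of \cite{Nua06book}, which the paper cites together with Proposition 2.1.11 instead of giving its own proof.

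\textbf{The gap.} The second stage never shows that $\|DM\|_{\mathcal H}>0$ a.s., and none of the routes you list delivers this.
\begin{enumerate}
\item Citing Theorem 2.1.3 of \cite{Nua06book} directly does not close the gap. That theorem takes $\|DM\|_{\mathcal H}>0$ a.s. as its hypothesis.
\item The events $\Omega_{B,\delta}$ do not remove the obstacle. On them, $DM=DM^B$ is still only known to lie in the closed convex hull of the nonzero vectors $Dv(t,x)$, $(t,x)\in\overline B\cap K$. That is exactly the convex-hull problem you identified. Moreover, with single balls these events need not cover $\Omega$: a ball containing two distant maximizers can contain zeros of $\|Dv\|_{\mathcal H}$, since Condition (iii) only controls $Dv$ on $\mathcal S_K$.
\item Localizing until the maximizer is essentially unique is not available. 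Uniqueness of the maximizer is precisely what the paper says it cannot establish, and no localization produces it.
\end{enumerate}

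\textbf{The missing idea.} This is the content of Proposition 2.1.11 of \cite{Nua06book}: the convex hull degenerates to a single point. Almost surely, $DM=Dv(t,x)$ simultaneously for \emph{every} $(t,x)\in\mathcal S_K$.

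To prove it, take a countable dense set $\{h_j\}$ in the unit ball of $\mathcal H$. For rational open balls $B$, $h=h_j$ and $k\in\mathbb N$, set
\[
G_{B,h,k}:=\Bigl\{\langle Dv(t,x)-DM,h\rangle_{\mathcal H}>\tfrac1k \text{ for all }(t,x)\in B\cap K\Bigr\}\cap\Bigl\{\sup_{B\cap K}v=M\Bigr\}.
\]
\begin{itemize}
\item \emph{Covering.} By continuity of $Dv$ in $\mathcal H$, the event ``some maximizer has $Dv(t,x)\neq DM$'' is contained in the countable union of the sets $G_{B,h,k}$.
\item \emph{Identification on $G_{B,h,k}$.} There $M^B:=\sup_{B\cap K}v=M$, so the local property of $D$ gives $DM^B=DM$.
\item \emph{Finite maxima stay separated.} Let $M^B_n$ be the maxima over the dense points lying in $B$. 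Then $DM^B_n=Dv(\tau^B_n)$ with $\tau^B_n\in B\cap K$. Hence $\langle DM^B_n-DM^B,h\rangle_{\mathcal H}>1/k$ on $G_{B,h,k}$ for every $n$.
\item \emph{Contradiction.} Your first stage, applied on $\overline{B\cap K}$, gives $DM^B_n\to DM^B$ weakly in $L^2(\Omega;\mathcal H)$. Testing against $\mathbbm{1}_{G_{B,h,k}}h$ yields $\PP(G_{B,h,k})\le k\,\E\bigl[\mathbbm{1}_{G_{B,h,k}}\langle DM^B_n-DM^B,h\rangle_{\mathcal H}\bigr]\to0$.
\end{itemize}

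With this identity, Condition (iii) gives $\|DM\|_{\mathcal H}>0$ a.s., and the one-dimensional Bouleau--Hirsch criterion finishes the proof. Neither the convex-hull description nor a separation condition like $\sup_{K\setminus B}v\le M-\delta$ is needed.
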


\subsection{Green's function estimates} \label{subsec:green-estimates}

The following representations for the Green's functions will be useful in the sequel.
\begin{lemma}
For all $t>0$ and $x,y\in[0,1]$, the Dirichlet and Neumann heat kernels and the Green's function $H_t$ admit the alternative representation
\begin{equation}\label{eq:images-Neumann}
\begin{split}
G_t^{\operatorname{D}}(x,y)=\sum_{m\in\ZZ}\left(p_t(x-y+2m)-p_t(x+y+2m)\right), \\
G_t^{\NN}(x,y)=\sum_{m\in\ZZ}\left(p_t(x-y+2m)+p_t(x+y+2m)\right), \\
H_t(x,y) = \sum_{m \in \ZZ} \left( g_t^\rho(x-y + 2m )+g_t^\rho (x+y + 2m)\right),
\end{split}
\end{equation}
where the series $\sum_{m\in\ZZ} p_t(x-y+2m)$, $\sum_{m\in\ZZ} p_t(x+y+2m)$, $\sum_{m \in \ZZ} g_t^\rho(x-y + 2m )$, and $\sum_{m \in \ZZ} g_t^\rho(x+y + 2m )$ are absolutely convergent, hence the representations in \eqref{eq:images-Neumann} are well-defined.
\end{lemma}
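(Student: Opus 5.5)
The plan is to derive all three identities from a single mechanism: the method of images combined with Poisson summation. In each case the eigenfunction series can be recognised as a periodised and then symmetrised fundamental solution on $\RR$. We first verify absolute convergence, since this justifies every interchange of sums below.

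\textbf{Step 1: absolute convergence.} For the heat kernel this is immediate. For fixed $t>0$ and $z\in\RR$ we have $p_t(z+2m)\le (4\pi t)^{-1/2}e^{-(|2m|-|z|)^2/(4t)}$ once $|2m|>|z|$, so the series is dominated by a Gaussian tail. For $g_t^\rho$ we need decay of $g_t^\rho(z)$ as $|z|\to\infty$. The symbol $\xi\mapsto e^{-t(\xi^4+\rho\xi^2)}$ is a Schwartz function, so its inverse Fourier transform $g_t^\rho$ is also Schwartz. In particular $|g_t^\rho(z)|\le C_t(1+|z|)^{-2}$, which is summable over $z+2m$, $m\in\ZZ$. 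If sharper bounds are wanted, a contour shift gives the known estimate $|g_t^\rho(z)|\le Ct^{-1/4}\exp(-c|z|^{4/3}t^{-1/3})$, but it is not needed here.

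\textbf{Step 2: Poisson summation.} Let $\phi$ be a Schwartz function on $\RR$ with Fourier transform $\hat\phi(\xi)=\int\phi(z)e^{-iz\xi}dz$. Poisson summation with period $2$ reads
\[
\sum_{m\in\ZZ}\phi(z+2m)=\frac12\sum_{k\in\ZZ}\hat\phi(k\pi)e^{ik\pi z}.
\]
We apply this to $\phi=p_t$, where $\hat\phi(\xi)=e^{-t\xi^2}$, and to $\phi=g_t^\rho$, where $\hat\phi(\xi)=e^{-t(\xi^4+\rho\xi^2)}$. Both symbols are even and both series converge absolutely. Pairing $k$ with $-k$ then gives
\[
\sum_{m}\phi(z+2m)=\tfrac12+\sum_{k\ge1}\hat\phi(k\pi)\cos(k\pi z).
\]

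\textbf{Step 3: recombination.} We evaluate at $z=x-y$ and $z=x+y$ and use the product formulas
\[
\cos(k\pi(x-y))\pm\cos(k\pi(x+y))=2\cos(k\pi x)\cos(k\pi y)\ \text{or}\ 2\sin(k\pi x)\sin(k\pi y).
\]
Adding the two series yields $1+2\sum_{k\ge1}\hat\phi(k\pi)\cos(k\pi x)\cos(k\pi y)$. This is \eqref{eq:neumann-kernel} for $\phi=p_t$ and \eqref{eq:H-def} for $\phi=g_t^\rho$, since $\hat\phi(k\pi)=e^{-(k^4\pi^4+\rho k^2\pi^2)t}$. Subtracting instead cancels the constant $\tfrac12$ terms and leaves $2\sum_{k\ge1}e^{-k^2\pi^2t}\sin(k\pi x)\sin(k\pi y)$, which is \eqref{eq:def-GD}.

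\textbf{Main obstacle.} The only non-routine point is the decay of $g_t^\rho$ in Step 1, because it is not an explicit Gaussian. It is handled by the Schwartz-class argument above: the symbol is smooth and it and all its derivatives decay superexponentially, so integrating by parts twice in $\xi$ gives $|z|^2|g_t^\rho(z)|\le \frac{1}{2\pi}\int|\partial_\xi^2 e^{-t(\xi^4+\rho\xi^2)}|d\xi<\infty$.
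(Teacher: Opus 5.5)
Your proof is correct and follows the paper's route for $H_t$: Poisson summation at $\pi\ZZ$ for the period-$2$ periodisation of the fundamental solution, then the product-to-sum identities for $\cos(k\pi(x\mp y))$. Beyond the paper's proof, you run the same argument for $G^{\operatorname{D}}$ and $G^{\NN}$, which the paper simply cites from Bally--Pardoux, and you justify absolute convergence through the Schwartz decay of $g_t^\rho$, which the paper leaves implicit.
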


\begin{proof}
The representations for the heat kernels $G_t^{\operatorname{D}}$ and $G_t^{\NN}$ are well-known, see, e.g., Remark 2.1 and (2.4) in \cite{ballypardoux}. We were not able to find the proof for the representation of $H_t$ in the literature, so we include it here for completeness.

First, using that $\cos(k\pi x)\cos(k\pi y)=\frac12\bigl(\cos(k\pi(x-y))+\cos(k\pi(x+y))\bigr)$, we can rewrite $H_t$ as
\begin{equation} \label{eq:meow12}
H_t(x,y)
= 1 + 
2\sum_{k=0}^\infty e^{-((k\pi)^4+\rho(k\pi)^2)t}\cos(k\pi x)\cos(k\pi y) =
\frac12\Bigl( P_t(x-y)+P_t(x+y)\Bigr),
\end{equation}
where
\[
P_t(s):=1+2\sum_{k=1}^\infty e^{-((k\pi)^4+\rho(k\pi)^2)t}\cos(k\pi s).
\]
Recalling the form of $g_t^\rho(z)$ in \eqref{eq:gtrho}, its Fourier transform is given by $\widehat{g_t^\rho}(\xi)=e^{-t(\xi^4+\rho\xi^2)}$. Then, evaluating the Poisson summation formula at $\pi\mathbb Z$ gives
\begin{align*}
P_t(s) =
\sum_{k\in\mathbb Z} e^{-((k\pi)^4+\rho(k\pi)^2)t}e^{ik\pi s} =
2\sum_{m\in\mathbb Z} g_t^\rho(s+2m).
\end{align*}
which completes the proof.
\end{proof}

The following lemma establishes some Gaussian upper bounds for the Dirichlet and Neumann heat kernels, as well as some of their derivatives.

\begin{lemma}
\label{lem:Neumann-Gaussian-|x-y|}
 Let $G$ denote either $G^{\operatorname{D}}$ or $G^{\operatorname{N}}$.
There exists a constant $C_T \in(0,\infty)$ such that for all
$0<t\le T$ and all $x,y\in[0,1]$,
\begin{align}
    G_t(x,y)
\;&\le\;
C_T p_t(x,y),  \label{eq:G-lower-upper-T}\\
|\partial_x G_t(x,y)|\;&\le\; C_T\,t^{-1}\exp\!\left(-\frac{|x-y|^2}{8\,t}\right), \label{eq:G-deriv-x-upper-T} \\
|\partial_t G_t (x,y)|\;&\le\; C_T\,t^{-3/2}\exp\!\left(-\frac{|x-y|^2}{8\,t}\right). \label{eq:Dirichlet-tder-upper-T}
\end{align}
\end{lemma}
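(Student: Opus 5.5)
The plan is to derive all three bounds from the method-of-images representation \eqref{eq:images-Neumann}, using termwise estimates on the Gaussian kernel $p_t$ and its derivatives. The case $G=G^{\NN}$ is the main one. For $G^{\operatorname{D}}$, the triangle inequality gives $|G^{\operatorname{D}}_t(x,y)|$ bounded by the same sum of absolute values. Since $G^{\operatorname{D}}_t\ge 0$, this yields \eqref{eq:G-lower-upper-T} as well. So it is enough to bound
\[
\Sigma_t(x,y):=\sum_{m\in\ZZ}\bigl(|q_t(x-y+2m)|+|q_t(x+y+2m)|\bigr),
\]
where $q_t$ is $p_t$, $\partial_z p_t$ or $\partial_t p_t$.

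The key geometric observation concerns $x,y\in[0,1]$. For every $m\in\ZZ$ we have $|x-y+2m|\ge |x-y|$, and indeed $|x-y+2m|\ge 2|m|-1$. Similarly, $|x+y+2m|\ge |x-y|$: for $m\ge0$ this holds because $x+y\ge|x-y|$, and for $m\le -1$ because $|x+y+2m|\ge 2-(x+y)\ge |x-y|$. We also have $|x+y+2m|\ge 2|m|-2$. Each term therefore has argument $z$ with $|z|\ge |x-y|$, and all but finitely many terms (a fixed number, independent of $t,x,y$) have $|z|\ge |m|$.

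Next come the explicit derivatives:
\[
\partial_z p_t(z)=-\frac{z}{2t}p_t(z),\qquad \partial_t p_t(z)=\Bigl(\frac{z^2}{4t^2}-\frac{1}{2t}\Bigr)p_t(z).
\]
Using $\sup_{r\ge0} r^k e^{-r^2/8t}\le C_k t^{k/2}$, we get
\[
|\partial_z p_t(z)|\le C t^{-1}e^{-z^2/8t},\qquad |\partial_t p_t(z)|\le C t^{-3/2}e^{-z^2/8t}.
\]
For each term, split the exponent as $e^{-z^2/8t}\le e^{-|x-y|^2/16t}e^{-z^2/16t}$. The finitely many near terms are bounded directly. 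For the far terms, $e^{-m^2/16t}\le e^{-m^2/16T}$, which is summable in $m$. This gives the bounds with $e^{-|x-y|^2/16t}$ in place of $e^{-|x-y|^2/8t}$.

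The main obstacle is matching the exact constants $1/4$ in \eqref{eq:G-lower-upper-T} and $1/8$ in the derivative bounds. To fix this I would not split the exponent. For \eqref{eq:G-lower-upper-T}, I would instead use $z^2\ge |x-y|^2+(z^2-|x-y|^2)$ and show $z^2-|x-y|^2\ge c\,m^2$ for $|m|\ge 2$. The summand is then bounded by $p_t(x-y)e^{-cm^2/4t}$, which is summable uniformly in $t\le T$ with sum at most $C_T$. The near terms satisfy $|z|\ge|x-y|$, so each is at most $p_t(x-y)$. For the derivative bounds, the same argument applied to $e^{-z^2/8t}$ (keeping the full exponent, since the polynomial factor was absorbed at rate $1/8$ from $1/4$) gives exactly \eqref{eq:G-deriv-x-upper-T} and \eqref{eq:Dirichlet-tder-upper-T}.

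Finally, termwise differentiation of the series is justified by locally uniform convergence of the differentiated series on $t\in[\epsilon,T]$, which follows from the same summable bounds.
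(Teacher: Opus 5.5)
Your proposal is correct and follows essentially the same route as the paper. Both use the image-series representation and the fact that every image point lies at distance at least $|x-y|$, with squared excess growing like $m^2$ (the paper writes this via $(u+2n)^2\ge u^2+4n^2$ and $\min\{x+y,2-x-y\}\ge|x-y|$). Both then bound $\partial_z p_t$ and $\partial_t p_t$ by $Ct^{-1}e^{-z^2/8t}$ and $Ct^{-3/2}e^{-z^2/8t}$, and differentiate termwise; your refinement that keeps the full exponent (rather than splitting it) is exactly what produces the stated constants $1/4$ and $1/8$.
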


\begin{proof}

In the rest of the proof, we write $G$ to denote $G^{\operatorname{N}}$. The proofs for $G^{\operatorname{D}}$ are analogous.

Set $a:=|x-y|\in[0,1]$. For the inequality in \eqref{eq:G-lower-upper-T}, we provide estimates for both series in \eqref{eq:images-Neumann}. Let $u\ge 0$ and $n\ge 0$. Since $(u+2n)^2 \ge u^2 +4n^2$, we have, for $t>0$,
\begin{equation}\label{eq:gauss-shift-T}
\exp\!\left(-\frac{(u+2n)^2}{4t}\right)
\le
\exp\!\left(-\frac{u^2}{4t}\right)\exp\!\left(-\frac{n^2}{t}\right).
\end{equation}

We first bound the ``$x-y$'' part in \eqref{eq:images-Neumann}. Note that since $a\le 1$, for every $k\in\ZZ\setminus\{0\}$ we have
\begin{equation} \label{eq:090}
    |x-y+2k|\ge a+2(|k|-1).
\end{equation}
Then, with explanations given below,
\begin{equation} \label{eq:091}
\begin{split}
    \sum_{k\in\ZZ} p_t(x-y+2k)
&\le
p_t(a)
+
2 (4\pi t)^{-1/2}\sum_{n=1}^\infty \exp\!\left(-\frac{(a+2n-2)^2}{4t}\right) \\
&\le p_t(a)
+
2 (4\pi t)^{-1/2}\sum_{n=1}^\infty \exp\!\left(-\frac{a^2}{4t}\right)\exp\!\left(-\frac{(n-1)^2}{t}\right) \\
&\le p_t(a)
+
2 (4\pi t)^{-1/2} \exp\!\left(-\frac{a^2}{4t}\right) \sum_{n=0}^\infty \exp\!\left(-\frac{n^2}{t}\right) \\
&\le \frac{C_T}{\sqrt{t}}\,\exp\!\left(-\frac{a^2}{4t}\right).
\end{split}
\end{equation}
In the estimates above, the first line follows by \eqref{eq:090}, the second line follows by \eqref{eq:gauss-shift-T} with $u=a$ and $n-1$ in place of $n$. In the last line, $C_T$ is a constant that depends only on $\sum_{m\ge 0}e^{-m^2/T}$.

Next, let $b := \min_{k\in\ZZ} |x+y+2k| = \min\{x+y,\,2-x-y\} \in [0,1]$ because, for $x+y \in [0,2]$, the minimum is achieved either for $k=0$ or $k=-1$.
Since $x+y\ge |x-y|=a$ and $2-x-y\ge a$ for $x,y\in[0,1]$, we have $b\ge a$.
Choose $k_0\in\{0,-1\}$ such that $|x+y+2k_0|=b$. Then for every $m\in\ZZ$, 
\[
|x+y+2(k_0+m)| \ge b + 2(|m|-1)_+,
\]
and arguing as in \eqref{eq:091} (with $b$ in place of $a$) we obtain
\begin{equation}\label{eq:xyplus-part-T}
\sum_{k\in\ZZ} p_t(x+y+2k)
\le
\frac{C_T}{\sqrt{t}}\,\exp\!\left(-\frac{b^2}{4t}\right)
\le
\frac{C_T}{\sqrt{t}}\,\exp\!\left(-\frac{a^2}{4t}\right),
\qquad 0<t\le T,
\end{equation}
with the same constant $C_T$ as in \eqref{eq:091}.
Combining \eqref{eq:images-Neumann}, \eqref{eq:091}, and \eqref{eq:xyplus-part-T} yields
\[
G_t(x,y)\le 2 \frac{C_T}{\sqrt{t}}\,\exp\!\left(-\frac{a^2}{4t}\right),
\qquad 0<t\le T,
\]
which proves the inequality in \eqref{eq:G-lower-upper-T}.

For the estimate in \eqref{eq:G-deriv-x-upper-T}, recall again that the series in \eqref{eq:images-Neumann} is absolutely and locally uniformly convergent for each $t>0$,
so we may differentiate termwise:
\[
\partial_x G_t(x,y)
=
\sum_{n\in\ZZ}\left(\partial_x p_t(x-y+2n)+\partial_x p_t(x+y+2n)\right).
\]
Moreover,
\begin{equation} \label{eq:0980}
   \partial_x p_t(z)= -\frac{z}{2t}\,p_t(z), \quad |\partial_x p_t(z)|
=
\frac{|z|}{2t}\,(4\pi t)^{-1/2}e^{-z^2/(4t)}
\le
C\,t^{-1}\,e^{-z^2/(8t)}, 
\end{equation}
where we used the pointwise inequality $|z|e^{-z^2/(4t)}\le C\sqrt{t}\,e^{-z^2/(8t)}$ for $z\in\RR,\ t>0$. Therefore, combining the last two equations, 
\[
|\partial_x G_t(x,y)|
\le
C\,t^{-1}\sum_{n\in\ZZ}\left(
e^{-\frac{(x-y+2n)^2}{8t}}
+
e^{-\frac{(x+y+2n)^2}{8t}}
\right).
\]
Analogous calculations leading to \eqref{eq:091} and \eqref{eq:xyplus-part-T} respectively give that, for $0<t\le T$,
\begin{equation} \label{eq:4567}
    \sum_{n\in\ZZ} e^{-\frac{(x-y+2n)^2}{8t}}
\le
C_T\,e^{-\frac{a^2}{8t}}, \quad \sum_{n\in\ZZ} e^{-\frac{(x+y+2n)^2}{8t}} \le 
C_T\,e^{-\frac{a^2}{8t}}.
\end{equation}
Putting these estimates together yields
\[
|\partial_x G_t(x,y)|
\le
C_T\,t^{-1}\exp\!\left(-\frac{|x-y|^2}{8t}\right),
\qquad x,y\in[0,1],\ 0<t\le T,
\]
which is the inequality in \eqref{eq:G-deriv-x-upper-T}.

For \eqref{eq:Dirichlet-tder-upper-T}, we again use absolute and locally uniform convergence to differentiate termwise:
\[
\partial_t G_t (x,y)
=
\sum_{n\in\ZZ}\left(\partial_t p_t(x-y+2n)+\partial_t p_t(x+y+2n)\right).
\]
Arguing as in \eqref{eq:0980}, a direct computation gives
\begin{equation}\label{eq:gt-t-der-bound-D-2}
\partial_t p_t(z)=\left(-\frac{1}{2t}+\frac{z^2}{4t^2}\right)p_t(z), \quad 
|\partial_t p_t(z)|
\le
C\,t^{-3/2}e^{-z^2/(8t)},
\end{equation}
where we used the pointwise bound $|z|^2 e^{-z^2/(4t)}\le C t\,e^{-z^2/(8t)}$ for $z\in\RR,\ t>0$. Therefore, from \eqref{eq:4567}, for $x,y\in[0,1],\ 0<t\le T, $
\[
|\partial_t G_t(x,y)|
\le
C\,t^{-3/2}\sum_{n\in\ZZ}\left(
e^{-\frac{(x-y+2n)^2}{8t}}
+
e^{-\frac{(x+y+2n)^2}{8t}}
\right) \le
C_T\,t^{-3/2}\exp\!\left(-\frac{|x-y|^2}{8t}\right),
\]
which is \eqref{eq:Dirichlet-tder-upper-T}.
\end{proof}

In the next lemma we collect some elementary $L^2$ estimates for the kernels $G^{\operatorname{D}}_\tau, G^N_\tau, p_\tau $ for small $\tau > 0$. The estimates in (i) and (ii) are direct corollaries of the form of the heat kernel $p$ and the representations in \eqref{eq:images-Neumann}, so we omit the proofs.

\begin{lemma}\label{lem:G-L2}
\begin{enumerate}
\renewcommand{\labelenumi}{(\roman{enumi})}

\item For the heat kernel $p_\tau(x)$ on $\RR$, we have that 
\begin{equation}\label{eq:p-L2}
\int_0^1 p^2_{\tau}(x,y) \,dy \;\le\; C \,\tau^{-1/2},
\end{equation}
where $C $ can be chosen as $\frac{1}{2\sqrt{2\pi}}$. 

\item Let $G$ denote either $G^{\operatorname{D}}$ or $G^{\operatorname{N}}$. For all $\tau\in(0,T]$ and all $x\in[0,1]$,
\begin{equation}\label{eq:G-L2}
\int_0^1 G^2_{\tau}(x,y) \,dy \;\le\; C_T \tau^{-1/2}.
\end{equation}
\item \label{lem:L2lower}
Take $x(t)=t^{\theta}$ with $0 < \theta< \frac{1}{2}$. There exist constants $c>0$ and $t_0\in(0,1/16)$ such that for all
$0<r\le t\le t_0$,
\begin{equation}\label{eq:L2lower-claim}
\int_0^1 G_r^{\operatorname{D}}(x(t),y)^2\,dy \ge c\,r^{-1/2}.
\end{equation}
\end{enumerate}
\end{lemma}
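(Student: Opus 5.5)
For (i) and (ii) the argument is a direct computation. For \eqref{eq:p-L2}, enlarge the domain of integration to $\RR$ and use the semigroup property:
\[
\int_{\RR}p_\tau(x-y)^2\,dy=p_{2\tau}(0)=(8\pi\tau)^{-1/2}=\tfrac{1}{2\sqrt{2\pi}}\tau^{-1/2}.
\]
For \eqref{eq:G-L2}, square the Gaussian upper bound \eqref{eq:G-lower-upper-T}, $G_\tau\le C_T p_\tau$, and then apply (i).

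The substance is (iii). There I will use the image representation \eqref{eq:images-Neumann} for $G^{\operatorname{D}}$ and show that, on a window of length of order $\sqrt r$ around $x=x(t)$, the term $p_r(x-y)$ dominates every other image term. Fix $0<r\le t\le t_0$, set $x=t^\theta$, and take $I:=[x-\sqrt r,\,x+\sqrt r]$. We have $\sqrt r\le\sqrt t=t^{\theta}\,t^{1/2-\theta}\le x\,t_0^{1/2-\theta}$ and $\theta<1/2$. So for $t_0$ small enough, $\sqrt r\le x/2$, and hence $I\subset[x/2,\,3x/2]\subset(0,1/4)$. The last inclusion also requires $t_0^\theta<1/6$, which is a further smallness condition on $t_0$.

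On $I$ I will estimate each term as follows.
\begin{itemize}
\item The main term satisfies $p_r(x-y)\ge(4\pi r)^{-1/2}e^{-1/4}$.
\item The first reflected term has $x+y\ge 3x/2\ge x$. Since $x^2/r\ge t^{2\theta-1}\ge t_0^{2\theta-1}$, this gives
\[
p_r(x+y)\le(4\pi r)^{-1/2}\exp\bigl(-t_0^{2\theta-1}/4\bigr),
\]
which is an arbitrarily small multiple of $(4\pi r)^{-1/2}$ once $t_0$ is small.
\item For the images with $m\ne0$, both $x-y$ and $x+y$ lie in $(-1/4,1/2)$. Hence $|x\pm y+2m|\ge 1$, and arguing as in \eqref{eq:091}, their total contribution is at most $C r^{-1/2}e^{-1/(8r)}$. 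This is again small relative to $r^{-1/2}$ because $r\le t_0$.
\end{itemize}
These terms enter $G^{\operatorname{D}}$ with mixed signs, but in absolute value they are all negligible. Choosing $t_0$ small therefore gives $G^{\operatorname{D}}_r(x,y)\ge \tfrac12(4\pi r)^{-1/2}e^{-1/4}$ for all $y\in I$.

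Finally, I restrict the integral to $I$:
\[
\int_0^1 G_r^{\operatorname{D}}(x,y)^2\,dy\ \ge\ |I|\cdot c\,r^{-1}=2\sqrt r\cdot c\,r^{-1}=c'\,r^{-1/2}.
\]
The only delicate point is the bookkeeping that fixes $t_0$, uniformly in $r\le t$. Three conditions are needed: $I$ must stay inside $(0,1)$ and away from $0$; the reflection at $0$ must be negligible, which is exactly where $\theta<1/2$ enters, through $x^2/r\to\infty$; and the far images must be negligible. Each holds for $t_0$ sufficiently small, depending only on $\theta$, and the requirement $t_0<1/16$ can be imposed at the same time.
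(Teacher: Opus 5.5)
Your proof is correct. For (i) and (ii) it matches what the paper has in mind; the paper omits those proofs as direct consequences of the Gaussian form and the image representation. For (iii) you take a genuinely different route.

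The paper argues spectrally. Orthogonality of $\{\sin(n\pi y)\}$ gives the exact identity $\int_0^1 G_r^{\operatorname{D}}(x,y)^2\,dy=2\sum_{n\ge1}e^{-2n^2\pi^2 r}\sin^2(n\pi x)$. It truncates at $N=\lfloor r^{-1/2}\rfloor$, where the weights are at least $e^{-2\pi^2}$. It then bounds $\sum_{n\le N}\cos(2n\pi x)$ by $1/\sin(\pi x)$, which gives $\sum_{n\le N}\sin^2(n\pi x)\ge N/4$ for $x\in[1/N,1-1/N]$. Finally it checks that $x(t)=t^\theta\ge 2\sqrt r\ge 1/N$.

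You instead prove a pointwise near-diagonal lower bound, $G_r^{\operatorname{D}}(x,y)\ge c\,r^{-1/2}$ for $|y-x|\le\sqrt r$, and integrate it over that window.

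\emph{Spectral route.} It needs no sign bookkeeping for the negative reflected term and handles both endpoints symmetrically. The price is that it depends on the explicit Dirichlet eigenbasis.

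\emph{Your route.} It gives a stronger pointwise statement and uses only the Gaussian behaviour of the image terms, so it would transfer to other kernels with such representations. The price is controlling $p_r(x+y)$, which is exactly where you need $x(t)$ large compared with $\sqrt r$.

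In both proofs $\theta<\tfrac12$ enters the same way: it guarantees $x(t)\gtrsim\sqrt t\ge\sqrt r$. Since $p_r(x+y)/p_r(x-y)=e^{-xy/r}$, in your argument a lower bound $x^2/r\ge K$ for a suitable fixed constant $K$ already suffices. So the full divergence $x^2/r\to\infty$ is more than you need, though harmless.
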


\begin{proof}[Proof of item (iii)]
Fix $r\in(0,t] \le 1/16$ and set $N:=\lfloor r^{-1/2}\rfloor\ge 4$. Using \eqref{eq:def-GD} and the orthogonality of $\{ \sin(n \pi y) \}$ in $L^2(0,1)$, we have
\begin{equation}\label{eq:L2lower-step1}
\int_0^1 G_r^{\operatorname{D}}(x,y)^2\,dy
=2\sum_{n=1}^\infty e^{-2n^2\pi^2 r}\sin^2(n\pi x) \ge 2e^{-2\pi^2}\sum_{n=1}^N \sin^2(n\pi x) = 2e^{-2\pi^2}\left(\frac{N}{2}-\frac12\sum_{n=1}^N \cos(2n\pi x)\right),
\end{equation}
where the first equality follows by taking the $L^2(0,1)$ norm of $\{\sin(n\pi \cdot)\}$. Moreover, the cosine sum has the closed form
\begin{equation}\label{eq:cos-sum-bound}
    \sum_{n=1}^N \cos(2n\pi x)
=\frac{\sin(N\pi x)\cos((N+1)\pi x)}{\sin(\pi x)}, \quad \text{hence} \; \Big|\sum_{n=1}^N \cos(2n\pi x)\Big|\le \frac{1}{|\sin(\pi x)|}.
\end{equation}
If $x\in[1/N,1/2]$, then $\sin(\pi x)\ge 2x$ and since $N \ge 4$, by plugging \eqref{eq:cos-sum-bound} into \eqref{eq:L2lower-step1},
\begin{equation}\label{eq:sin2-sum-lower}
\sum_{n=1}^N \sin^2(n\pi x)\ge \frac{N}{2}-\frac{1}{4x} \ge \frac{N}{4}.
\end{equation}
By symmetry, the same bound holds when $x\in[1/2,1 -\frac{1}{N}]$ by using $\sin(\pi x)=\sin(\pi(1-x))$.
Combining \eqref{eq:L2lower-step1} and \eqref{eq:sin2-sum-lower} yields, for $x \in [1/N, 1 - 1/N]$,
\begin{equation}\label{eq:L2lower-step2}
\int_0^1 G_r^{\operatorname{D}}(x,y)^2\,dy \ge c_0\,N \ge c_1\,r^{-1/2}.
\end{equation}

Setting $x=x(t)=t^\theta$ and since $0<r\le t\le t_0$, $N=\lfloor r^{-1/2}\rfloor$, for $r\le 1/4$ we have
$N\ge r^{-1/2}-1 \ge \frac12 r^{-1/2}$, hence
$
\frac1N \le 2\sqrt r.
$
Since $\theta < 1/2$, we can choose $t_0$ small so that for all $t\le t_0$ $t^\theta > 2 \sqrt{t}$, $t_0^{\theta}<\frac{1}{2}$ and thus
\[
x(t)=t^\theta \ge 2\sqrt t \ge 2\sqrt r \ge \frac{1}{N},
\qquad\text{and}\qquad 1-x(t)\ge \frac12\ge \frac{1}{N}.
\]
Hence $x(t)\ge 1/N$ and $1-x(t)\ge 1/N$, and \eqref{eq:L2lower-step2}
gives \eqref{eq:L2lower-claim}. 
\end{proof}

\section{Auxiliary Lemmas} \label{sec:auxiliary}

\subsection{For the case $\kappa = 0$}\label{subsec:kappa=0} In this case, the Green's functions $G^{\operatorname{D}}$ and $G^{\operatorname{N}}$ correspond to Regimes \ref{:dirichlet} and \ref{case:neumann} respectively, while the Malliavin derivative is given by \eqref{PPddfo3}, with $G^{\operatorname{D}}$ (resp. $G^{\operatorname{N}}$) replacing $\mathcal{G}$. In Section \ref{subsubsec:estimates-kernels-kappa=0}, we provide some useful estimates for $G$. In Section \ref{subsubsec:estimates-malliavin-kappa=0}, we prove some bounds for the Malliavin derivative.

\subsubsection{Estimates for the kernels} \label{subsubsec:estimates-kernels-kappa=0}

The following lemma provides us with $L^2$ estimates for space-time increments of $G_t$. The proofs of items \ref{lem:G-space-increment-L2} and \ref{lem:time-increment-weighted} of Lemma \ref{lemma:space-time-increments-G} resemble these of Lemmas B.2.1(3) and B.3.1(3) in \cite{dalangsole}. 

\begin{lemma} \label{lemma:space-time-increments-G}

Let $G$ denote either $G^{\operatorname{D}}$ or $G^{\operatorname{N}}$. The following are true.
\begin{enumerate}[label=(\roman*)]
\item \label{lem:G-space-increment-L2}
For every $\alpha\in(0,1)$ there exists $C_{\alpha,T}>0$ such that for all
$t\in(0,T]$ and all $x,\bar x\in[0,1]$,
\begin{equation}\label{eq:G-space-increment-L2}
\int_0^1 \left(G_t(x,r)-G_t(\bar x,r)\right)^2\,dr
\;\le\;
C_{\alpha, T}\,|x-\bar x|^{2\alpha}\,t^{-1/2-\alpha}.
\end{equation}

\item \label{lem:product-bound}
Let $0\le s<t\le T$ and $x,\bar x,y\in[0,1]$. For every $\alpha\in(0,1/2)$ there exists
$C_{\alpha,T}>0$ such that
\begin{equation}\label{eq:product-bound}
\int_s^t\int_0^1
\left(G_{t-\theta}(x,r)-G_{t-\theta}(\bar x,r)\right)^2\,p_{\theta-s}(y,r)^2\,dr\,d\theta
\;\le\;
C_{\alpha,T}\,|x-\bar x|^{2\alpha}\,(t-s)^{-1/2-\alpha}.
\end{equation}

\item \label{lem:time-increment-weighted}
Fix $0\le s< t,\bar t\le T$ and set $t_0:=t\wedge \bar t$.
Then for every $\beta\in(0,1/4)$ there exists $C_{\beta, T}>0$ such that,
for every $\theta \in [0,t_0)$ and $\bar x\in[0,1]$,
\begin{equation}\label{eq:weighted-time-increment}
\int_0^1
\left(G_{t-\theta}(\bar x,r)-G_{\bar t-\theta}(\bar x,r)\right)^2\,
\,dr
\;\le\;
C_{\beta, T}\,|t-\bar t|^{2\beta}\,(t_0-\theta)^{-1/2-2\beta}.
\end{equation}

\item \label{lem:time-increment-product-correct}
Let $0\le s<t,\bar t\le T$ and set $t_0:=t\wedge \bar t$.
Then for every $\beta\in(0,1/4)$ there exists $C_{\beta,T}>0$ such that for all
$\bar x,y\in[0,1]$,
\begin{equation}\label{eq:time-increment-product-correct}
\int_s^{t_0}\int_0^1
\left(G_{t-\theta}(\bar x,r)-G_{\bar t-\theta}(\bar x,r)\right)^2\,
p_{\theta-s}(y,r)^2\,dr\,d\theta
\;\le\;
C_{\beta,T}\,|t-\bar t|^{2\beta}\,(t_0-s)^{-1/2-2\beta}.
\end{equation}

\end{enumerate}
\end{lemma}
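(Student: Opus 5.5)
For items \ref{lem:G-space-increment-L2} and \ref{lem:time-increment-weighted} I will interpolate between two elementary bounds. For \ref{lem:G-space-increment-L2}, the first bound is trivial: by \eqref{eq:G-L2},
\[
\int_0^1 \bigl(G_t(x,r)-G_t(\bar x,r)\bigr)^2\,dr \le C_T\, t^{-1/2}.
\]
For the second bound, I write $G_t(x,r)-G_t(\bar x,r)=\int_{\bar x}^{x}\partial_z G_t(z,r)\,dz$ and apply Cauchy--Schwarz, then \eqref{eq:G-deriv-x-upper-T}. Integrating $t^{-2}e^{-|z-r|^2/(4t)}$ in $r$ gives the bound $C_T|x-\bar x|^2 t^{-3/2}$. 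Taking the minimum of the two bounds and raising them to the powers $1-\alpha$ and $\alpha$ yields $C|x-\bar x|^{2\alpha}t^{-1/2-\alpha}$.

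For \ref{lem:time-increment-weighted} I argue the same way. Say $t_0=t<\bar t$. The crude bound is $2\int_0^1 G_{t-\theta}^2+G_{\bar t-\theta}^2 \le C(t_0-\theta)^{-1/2}$, using that $\bar t-\theta\ge t_0-\theta$. For the smooth bound I write $G_{\bar t-\theta}-G_{t-\theta}=\int_{t-\theta}^{\bar t-\theta}\partial_\tau G_\tau\,d\tau$. Minkowski's integral inequality and \eqref{eq:Dirichlet-tder-upper-T} give $\|\partial_\tau G_\tau(\bar x,\cdot)\|_{L^2}\le C\tau^{-5/4}$. This leads to
\[
\Bigl(\int_{t_0-\theta}^{\infty}\tau^{-5/4}\mathbf 1_{\{\tau\le t_0-\theta+|t-\bar t|\}}\,d\tau\Bigr)^2 \le C|t-\bar t|^2 (t_0-\theta)^{-5/2}.
\]
Interpolating with exponent $\beta$ gives $|t-\bar t|^{2\beta}(t_0-\theta)^{-1/2-2\beta}$. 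Here $\beta<1/4$ is not needed yet, but it becomes necessary in \ref{lem:time-increment-product-correct}.

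Items \ref{lem:product-bound} and \ref{lem:time-increment-product-correct} are the main obstacle, since the extra factor $p_{\theta-s}(y,r)^2$ is not integrable in $\theta$ against the singularity $(t-\theta)^{-1/2-\alpha}$ unless we control the sup norm carefully. I plan to split $[s,t]$ at the midpoint $m=(s+t)/2$.

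On $\theta\in[s,m]$, I bound the $G$-increment pointwise rather than in $L^2$. By the mean value theorem and \eqref{eq:G-deriv-x-upper-T}, combined with the trivial bound $|G|\le C(t-\theta)^{-1/2}$, I get $|G_{t-\theta}(x,r)-G_{t-\theta}(\bar x,r)|\le C|x-\bar x|^{\alpha}(t-\theta)^{-1/2-\alpha/2}$. Since $t-\theta\ge (t-s)/2$ on this range, this is at most $C|x-\bar x|^{\alpha}(t-s)^{-1/2-\alpha/2}$. By \eqref{eq:p-L2}, $\int p_{\theta-s}^2\,dr\le C(\theta-s)^{-1/2}$, which is integrable and contributes $(t-s)^{1/2}$. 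The total is $C|x-\bar x|^{2\alpha}(t-s)^{-1/2-\alpha}$, as needed.

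On $\theta\in[m,t]$, I instead use $\sup_r p_{\theta-s}(y,r)^2\le C(\theta-s)^{-1}\le C(t-s)^{-1}$ together with item \ref{lem:G-space-increment-L2}. This gives $(t-s)^{-1}|x-\bar x|^{2\alpha}\int_m^t (t-\theta)^{-1/2-\alpha}\,d\theta$, which is finite precisely because $\alpha<1/2$. The result is $C|x-\bar x|^{2\alpha}(t-s)^{-1/2-\alpha}$.

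Item \ref{lem:time-increment-product-correct} follows identically with $t_0$ in place of $t$. Pointwise time-increment bounds come from \eqref{eq:Dirichlet-tder-upper-T} interpolated with the sup bound, giving $|t-\bar t|^{\beta}(t_0-\theta)^{-1/2-\beta}$. Near $t_0$ I use item \ref{lem:time-increment-weighted}, where integrability of $(t_0-\theta)^{-1/2-2\beta}$ requires $\beta<1/4$.
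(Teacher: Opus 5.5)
Your proof is correct, and it takes a different route from the paper's in all four items. For (i) and (iii) the paper argues spectrally. It expands the increment in the eigenbasis and uses orthogonality in $r$. It then bounds $(\cos(k\pi x)-\cos(k\pi\bar x))^2\le Ck^{2\alpha}|x-\bar x|^{2\alpha}$ and $|e^{-\lambda a}-e^{-\lambda b}|\le C_\beta|a-b|^\beta\lambda^\beta e^{-\lambda(a\wedge b)}$, and sums $\sum_k k^{2\alpha}e^{-2k^2\pi^2 t}\le Ct^{-1/2-\alpha}$. You instead interpolate, via $\min(A,B)\le A^{1-\alpha}B^{\alpha}$, between the crude $L^2$ bound and a derivative bound built from \eqref{eq:G-deriv-x-upper-T} and \eqref{eq:Dirichlet-tder-upper-T}.

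For (ii) and (iv) the paper proves the pointwise H\"older bound \eqref{eq:holder-pointwise-gaussian}, which keeps Gaussian localization around $x$ and $\bar x$. This costs a case analysis on whether $r$ lies between $x$ and $\bar x$, and in (iv) a split according to $|u-\bar u|\le\tau$ or $|u-\bar u|>\tau$. The paper then applies the Gaussian product estimate \eqref{eq:gaus-estim} to get the integrand $u^{-1/2-\alpha}v^{-1/2}(u+v)^{-1/2}$ and finishes with one Beta integral.

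Your midpoint split decouples the two singularities instead. On $[s,m]$ you take the kernel increment in $L^\infty$, which needs no spatial decay because $t-\theta\ge (t-s)/2$, and $p_{\theta-s}^2$ in $L^1$. On $[m,t]$ you take $p_{\theta-s}^2$ in $L^\infty$ and reuse (i), respectively (iii); this is exactly where $\alpha<1/2$, respectively $\beta<1/4$, is needed. The exponents check out on both halves, and your two regimes reproduce the paper's integrand up to constants, so nothing is lost.

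Your route is shorter and more portable, since it uses only sup, $L^2$ and derivative bounds on the kernel. The paper's spectral route for (i) and (iii) needs no derivative estimates at all. It is also the method the paper reuses for the fourth-order kernel $H$ in Lemmas \ref{greenincrementsintegral} and \ref{lemmagdif}, where no derivative bounds for $H$ are established.
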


\begin{proof}
Since the proofs for the two kernels are very similar, we prove the lemma for $G = G^{\operatorname{N}}$ (denoted simply by $G$ for the rest of the proof). At the end of each item, we highlight the differences for the proof when $G = G^{\operatorname{D}}$.

\textit{Proof of item \emph{(i)}.}
Using \eqref{eq:neumann-kernel} and the orthogonality
\(
\int_0^1 \cos(k\pi r)\cos(\ell\pi r)\,dr = \frac12\,\mathbf 1_{\{k=\ell\}}
\)
for $k,\ell\ge 1$, we can write
\[
\int_0^1 (G_t(x,r)-G_t(\bar x,r))^2\,dr
=
2\sum_{k\ge1} e^{-2k^2\pi^2 t}\left(\cos(k\pi x)-\cos(k\pi\bar x)\right)^2.
\]
Since $|\cos a-\cos b|\le 2 \min(1,|a-b|)$ and $\min(1,u)\le u^\alpha$ for $\alpha\in[0,1]$,
we obtain
\begin{equation} \label{eq:cos-sin-1}
    \left(\cos(k\pi x)-\cos(k\pi\bar x)\right)^2
\le (2 \min(1,|k\pi x - k\pi \bar{x}|))^2 \le 4 k^{2\alpha} \pi^{2\alpha} |x-\bar x|^{2\alpha}.
\end{equation}
Therefore,
\begin{flalign} \label{eq:0087}
\int_0^1 (G_t(x,r)-G_t(\bar x,r))^2\,dr
&\le
C_\alpha \,|x-\bar x|^{2\alpha}\sum_{k\ge1} k^{2\alpha} e^{-2k^2\pi^2 t} \notag\\
&\le C_\alpha \,|x-\bar x|^{2\alpha} \int_{0}^{\infty} (u+1)^{2\alpha} e^{-2 \pi^2 t u^{2}}\,du
\notag\\
&\le C_\alpha \,|x-\bar x|^{2\alpha} 2^{2\alpha}\int_{0}^{\infty}\left(u^{2\alpha}+1\right)e^{-2 \pi^2 t u^{2}}\,du
\notag\\
&= C_\alpha \,|x-\bar x|^{2\alpha} \left(t^{-\alpha-\frac12}\int_{0}^{\infty}u^{2\alpha}e^{-2 \pi^2 u^{2}}\,du
\;+\;t^{-\frac12}\int_{0}^{\infty}e^{-2 \pi^2 u^{2}}\,du\right)
\notag\\
&\le C_\alpha \,|x-\bar x|^{2\alpha}\,\left(t^{-\alpha-\frac12}+t^{-\frac12}\right)
\notag\\
&\le C_{\alpha,T}\,|x-\bar x|^{2\alpha}\,t^{-\alpha-\frac12}.
\end{flalign}
This
yields \eqref{eq:G-space-increment-L2}.

For $G^{\operatorname{D}}$, the same proof holds with $\sin$ replacing $\cos$. In particular, \eqref{eq:cos-sin-1} holds in this case as~well.

\medskip
\textit{Proof of item \emph{(ii)}.}
Let $u\doteq t-\theta\in(0,t-s)$ and $v\doteq \theta-s\in(0,t-s)$ so that $u+v=t-s$.
We claim that for all $\alpha\in(0,1/2)$, all $u\in(0,T]$, and all $r\in[0,1]$,
\begin{equation}\label{eq:holder-pointwise-gaussian}
|G_u(x,r)-G_u(\bar x,r)|
\le
C_T \,|x-\bar x|^\alpha\,u^{-(1+\alpha)/2}\left(
e^{-|x-r|^2/(8 u)}+e^{-|\bar x-r|^2/(8 u)}
\right).
\end{equation}
Indeed, if $|x-\bar x|>\sqrt{u}$, then
$|x-\bar x|^\alpha u^{-\alpha/2}\ge 1$ and the bound follows immediately from \eqref{eq:G-lower-upper-T} (for the Neumann kernel) applied to $G_u(x,r)+G_u(\bar x,r)$ (by replacing the $4u$ in the denominator of the exponent with $8u$ for an upper bound).

Assume, on the other hand, that $|x-\bar x|\le \sqrt{u}$. By the mean value theorem and \eqref{eq:G-deriv-x-upper-T}, for some $\xi = \lambda x + (1-\lambda) \bar x, \; \lambda \in (0,1)$,
\begin{equation}\label{eq:MVT}
\begin{split}
|G_u(x,r)-G_u(\bar x,r)|
&\le
|x-\bar x|\,|\partial_x G_u(\xi,r)| \\
&\le
C_T \,u^{(1-\alpha)/2}|x-\bar x|^\alpha \,u^{-1}\exp\!\left(-\frac{|\lambda x + (1-\lambda) \bar x-r|^2}{8\,u}\right) \\
&\le C_T \,|x-\bar x|^\alpha\,u^{-(1+\alpha)/2}\left(
e^{-|x-r|^2/(8u)}+e^{-|\bar x-r|^2/(8 u)}
\right) .
\end{split}
\end{equation}
To justify the last line in \eqref{eq:MVT}, note that, if $r\notin [x,\bar x]$ (or $[\bar x,x]$), then $\xi=\lambda x+(1-\lambda)\bar x$ lies between $x$ and $\bar x$, so
\[
|\xi-r|\ge \min\{|x-r|,|\bar x-r|\}, \quad \text{and therefore }e^{-|\xi-r|^2/(8u)}\le e^{-|x-r|^2/(8u)}+e^{-|\bar x-r|^2/(8u)}.
\]
If $r\in [x,\bar x]$, then $|x-r|,|\bar x-r|\le |x-\bar x|\le \sqrt u$, hence
\[
e^{-|x-r|^2/(8u)}+e^{-|\bar x-r|^2/(8u)}\ge 2e^{-1/8}\ge 1\ge e^{-|\xi-r|^2/(8u)}.
\]
This proves \eqref{eq:holder-pointwise-gaussian}.

Squaring \eqref{eq:holder-pointwise-gaussian} and multiplying by $p_v(y,r)^2$ gives
\begin{align}
\int_0^1 \left(G_u(x,r)-G_u(\bar x,r)\right)^2\,p_v(y,r)^2\,dr
&\le
C_T\,|x-\bar x|^{2\alpha}\,u^{-(1+\alpha)}\,v^{-1}
\nonumber\\
&\quad\times
\int_0^1
\left(
e^{-|x-r|^2/(4 u)}+e^{-|\bar x-r|^2/(4 u)}
\right)
e^{-|y-r|^2/(4 v)}\,dr\nonumber \\
&\le C_T \,|x-\bar x|^{2\alpha}\,u^{-1/2-\alpha}\,v^{-1/2}\,(u+v)^{-1/2}. 
\label{eq:two-gaussians-integral}
\end{align}
For the last line \eqref{eq:two-gaussians-integral}, we have used the Gaussian estimate
\begin{equation} \label{eq:gaus-estim}
    \int_0^1 e^{-|x-r|^2/(c u)}e^{-|y-r|^2/(c v)}\,dr
\;\le\;
C\,\sqrt{\frac{uv}{u+v}},
\end{equation}
and the respective bound with $x$ replaced by $\bar x$.

Returning to $\theta$ (so $u=t-\theta$, $v=\theta-s$, $u+v=t-s$), recasting \eqref{eq:two-gaussians-integral} and integrating over $\theta\in(s,t)$,
\begin{flalign}
\int_s^t \int_0^1 \left(G_{t-\theta}(x,r)-G_{t-\theta}(\bar x,r)\right)^2&\,p_{\theta-s}(y,r)^2\,dr  d \theta  
\nonumber\\
&\le \int_s^t C_T \,|x-\bar x|^{2\alpha}\,(t-s)^{-1/2}\,(t-\theta)^{-1/2-\alpha}\,(\theta-s)^{-1/2} d \theta 
\nonumber\\
&\le C_T \,|x-\bar x|^{2\alpha}\,(t-s)^{-1/2} \int_s^t (t-\theta)^{-1/2-\alpha}(\theta-s)^{-1/2}\,d\theta 
\nonumber\\
&\le C_T \,|x-\bar x|^{2\alpha}\,(t-s)^{-1/2 - \alpha} B\left(\frac12-\alpha,\frac12\right)\nonumber\\
&= C_{\alpha,T} |x-\bar x|^{2\alpha}\,(t-s)^{-1/2 - \alpha}, \label{eq:Beta-example}
\end{flalign}
proving \eqref{eq:product-bound}.

We remark that, for $G^{\operatorname{D}}$, the same proof holds by using the respective estimates in Lemma \ref{lem:Neumann-Gaussian-|x-y|} for the kernel $G^{\operatorname{D}}$.

\medskip
\textit{Proof of item \emph{(iii)}.}
For $\theta<t_0$ we have $t-\theta>0$ and $\bar t-\theta>0$, and thus \eqref{eq:neumann-kernel} gives
\begin{equation} \label{eq:diff-g}
G_{t-\theta}(\bar x,r)-G_{\bar t-\theta}(\bar x,r)
=
2\sum_{k\ge1}\left(e^{-k^2\pi^2 (t-\theta)}-e^{-k^2\pi^2 (\bar t-\theta)}\right)
\cos(k\pi \bar x)\cos(k\pi r).
\end{equation}
By orthogonality in $r$, for $\theta\in(s,t_0)$,
\begin{flalign}\label{eq:ortho-time}
\|G_{t-\theta}(\bar x,r)-G_{\bar t-\theta}(\bar x,r) \|^2_{L^2([0,1])} &=  \int_0^1 \left(G_{t-\theta}(\bar x,r)-G_{\bar t-\theta}(\bar x,r)\right)^2\,dr
\nonumber\\
&\le
2\sum_{k\ge1}
\left(e^{-k^2\pi^2 (t-\theta)}-e^{-k^2\pi^2 (\bar t-\theta)}\right)^2 \nonumber\\
&\le C_\beta\,|t-\bar t|^{2\beta}\,\sum_{k\ge1} k^{4\beta}\,e^{-2k^2\pi^2 (t_0-\theta)} \nonumber\\
&\le C_{\beta,T}\,|t-\bar t|^{2\beta}\,(t_0-\theta)^{-1/2-2\beta}.
\end{flalign}
For the third line, we have used the fact that, for fixed $\beta\in(0,1)$, $\lambda\ge0$, and $a,b>0$,
\begin{equation}\label{eq:frac-exp-diff}
\big|e^{-\lambda a}-e^{-\lambda b}\big|
\le
C_\beta\,|a-b|^\beta\,\lambda^\beta\,e^{-\lambda(a\wedge b)},
\end{equation}
and we have applied this estimate with $\lambda=k^2\pi^2$, $a=t-\theta$, $b=\bar t-\theta$. The fourth line follows by $\sum_{k\ge1} k^{4\beta}e^{-c k^2 \tau}\le C\,\tau^{-1/2-2\beta}$ for $\tau\in(0,T]$, which in turn is a result of identical calculations
that we used to bound the series in the first line of \eqref{eq:0087} (with $4\beta$ replacing $2\alpha$). 

For $G^{\operatorname{D}}$, the same proof holds with $\sin$ replacing $\cos$. This completes the proof of (iii).

\medskip
\textit{Proof of item \emph{(iv)}.}
\medskip
Fix $\theta\in(s,t_0)$ and set $u:=t-\theta, \bar u:= \bar t-\theta,\tau:=u\wedge \bar u=t_0-\theta, v:= \theta-s$. We first treat the case $|u-\bar u|\le \tau$. By the mean value theorem,
\[
\big|G_u(\bar x,r)-G_{\bar u}(\bar x,r)\big|
\le
|u-\bar u|\sup_{w\in[\tau,\,2\tau]}\big|\partial_w G_w(\bar x,r)\big|.
\]
Using \eqref{eq:Dirichlet-tder-upper-T}, 
\[
|\partial_w G_w(\bar x,r)|
\le
C_T\,w^{-3/2}\exp\!\left(-\frac{|\bar x-r|^2}{8w}\right)
\le
C_T\,\tau^{-3/2}\exp\!\left(-\frac{|\bar x-r|^2}{16\tau}\right),
\]
since $w\in[\tau,2\tau]$. Hence
\begin{equation}\label{eq:small-time-diff-pointwise-beta}
\begin{split}
\big|G_u(\bar x,r)-G_{\bar u}(\bar x,r)\big|
&\le
C_T\,|u-\bar u|\,\tau^{-3/2}\exp\!\left(-\frac{|\bar x-r|^2}{16\tau}\right) \\
&\le C_T\,|u-\bar u|^\beta\,\tau^{-1/2-\beta}
\exp\!\left(-\frac{|\bar x-r|^2}{16\tau}\right),
\end{split}
\end{equation}
where we used that, since $|u-\bar u|\le \tau$, then for every $\beta\in(0,1)$, we have $|u-\bar u|\,\tau^{-3/2}
\le
|u-\bar u|^\beta\,\tau^{-1/2-\beta}$.

Squaring \eqref{eq:small-time-diff-pointwise-beta}, multiplying by $p_v(y-r)^2$, and integrating in $r$, we get
\begin{align}
\int_0^1 \big(G_u(\bar x,r)-G_{\bar u}(\bar x,r)\big)^2 p_v(y-r)^2\,dr
&\le
C_T\,|u-\bar u|^{2\beta}\tau^{-1-2\beta}
\int_0^1 e^{-|\bar x-r|^2/(8\tau)}\,p_v(y-r)^2\,dr \nonumber\\
&\le
C_T\,|u-\bar u|^{2\beta}\tau^{-1-2\beta}v^{-1}
\int_0^1 e^{-|\bar x-r|^2/(8\tau)}e^{-|y-r|^2/(4v)}\,dr \nonumber\\
&\le
C_T\,|u-\bar u|^{2\beta}\tau^{-1-2\beta}v^{-1}
\int_0^1 e^{-|\bar x-r|^2/(8\tau)}e^{-|y-r|^2/(8v)}\,dr \nonumber\\
&\le
C_T\,|u-\bar u|^{2\beta}\tau^{-1/2-2\beta}(\tau+v)^{-1/2}v^{-1/2},
\label{eq:small-case-weighted}
\end{align}
where in the last line we used \eqref{eq:gaus-estim}.

We now consider the case $|u-\bar u|>\tau$. Using the bound
\[
\big(G_u(\bar x,r)-G_{\bar u}(\bar x,r)\big)^2
\le
2G_u(\bar x,r)^2+2G_{\bar u}(\bar x,r)^2,
\]
we get that
\begin{multline}
\int_0^1 \big(G_u(\bar x,r)-G_{\bar u}(\bar x,r)\big)^2 p_v(y-r)^2\,dr \\ 
\le
2\int_0^1 G_u(\bar x,r)^2 p_v(y-r)^2\,dr
+
2\int_0^1 G_{\bar u}(\bar x,r)^2 p_v(y-r)^2\,dr.
\label{eq:large-case-split}
\end{multline}
We estimate the first term in the second line in \eqref{eq:large-case-split}; the second is identical. By \eqref{eq:G-lower-upper-T},
\[
G_u(\bar x,r)
\le
C_T\,u^{-1/2}\exp\!\left(-\frac{|\bar x-r|^2}{8u}\right),
\]
and since $p_v(y-r)^2\le C\,v^{-1}e^{-|y-r|^2/(4v)}$, it follows that
\begin{align}
\int_0^1 G_u(\bar x,r)^2 p_v(y-r)^2\,dr
&\le
C_T\,u^{-1}v^{-1}\int_0^1
e^{-|\bar x-r|^2/(4u)}e^{-|y-r|^2/(4v)}\,dr \nonumber\\
&\le
C_T\,u^{-1}v^{-1}\int_0^1
e^{-|\bar x-r|^2/(8u)}e^{-|y-r|^2/(8v)}\,dr \nonumber\\
&\le
C_T\,u^{-1/2}(u+v)^{-1/2}v^{-1/2},
\label{eq:large-case-one-term}
\end{align}
where again we used \eqref{eq:gaus-estim}. Since $u\ge \tau$ and $|u-\bar u|>\tau$, we have $u^{-1/2}\le \tau^{-1/2}\le |u-\bar u|^{2\beta}\tau^{-1/2-2\beta}.$
Using also that $u+v=t-s\ge t_0-s$, \eqref{eq:large-case-one-term} yields
\[
\int_0^1 G_u(\bar x,r)^2 p_v(y-r)^2\,dr
\le
C_T\,|u-\bar u|^{2\beta}\tau^{-1/2-2\beta}(t_0-s)^{-1/2}v^{-1/2}.
\]
The same bound holds with $\bar u$ in place of $u$. Thus, by \eqref{eq:large-case-split},
\begin{equation}\label{eq:large-case-weighted}
\int_0^1 \big(G_u(\bar x,r)-G_{\bar u}(\bar x,r)\big)^2 p_v(y-r)^2\,dr
\le
C_T\,|u-\bar u|^{2\beta}\tau^{-1/2-2\beta}(t_0-s)^{-1/2}v^{-1/2}.
\end{equation}

Combining \eqref{eq:small-case-weighted} and \eqref{eq:large-case-weighted}, and recalling that $u-\bar u=t-\bar t,\qquad \tau=t_0-\theta,\qquad v=\theta-s$, we obtain for all $\theta\in(s,t_0)$ that
\begin{flalign*}
\int_s^{t_0}\int_0^1
\big(G_{t-\theta}(\bar x,r)-&G_{\bar t-\theta}(\bar x,r)\big)^2
p_{\theta-s}(y-r)^2\,dr\,d\theta \\
&\le
C_T\,|t-\bar t|^{2\beta}(t_0-s)^{-1/2}
\int_s^{t_0}
(\theta-s)^{-1/2}(t_0-\theta)^{-1/2-2\beta}\,d\theta \\ 
&=
C_T\,|t-\bar t|^{2\beta}(t_0-s)^{-1/2-2\beta}
B\!\left(\frac12,\frac12-2\beta\right) \\
&= C_{\beta,T}\,|t-\bar t|^{2\beta}\,(t_0-s)^{-1/2-2\beta}.
\label{eq:final-iv-beta}
\end{flalign*}
For the third line above, we used that $\beta<1/4$, hence the Beta function is finite, proving~\eqref{eq:time-increment-product-correct}.

For $G^{\operatorname{D}}$, the same proof applies using the corresponding bounds from Lemma~\ref{lem:Neumann-Gaussian-|x-y|}. This completes the proof. \end{proof}

The following is an immediate consequence of Lemma \ref{lemma:space-time-increments-G}.

\begin{corollary} \label{cor:space-time-diff}
For any $\alpha \in (0,1/2)$, $0  \le t \le \bar t \le T$, and $x, \bar x \in [0,1]$, there exists $C_{\alpha,T} > 0$ such that
\begin{equation*}  
    \int_{0}^{t} \int_{0}^{1}|G_{t-s}(x, y)-G_{\bar{t}-s}(\bar{x}, y)|^{2} \mathrm{~d} y \mathrm{~d} s \le C_{\alpha,T} \left( |\bar{x}-x|^{2\alpha}+|\bar{t}-t|^{\alpha} \right) .
\end{equation*}
\end{corollary}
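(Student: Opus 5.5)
We split the increment through the intermediate point $(t,\bar x)$, using $(a+b)^2\le 2a^2+2b^2$:
\[
|G_{t-s}(x,y)-G_{\bar t-s}(\bar x,y)|^2\le 2|G_{t-s}(x,y)-G_{t-s}(\bar x,y)|^2+2|G_{t-s}(\bar x,y)-G_{\bar t-s}(\bar x,y)|^2 .
\]
This reduces the claim to a pure space increment and a pure time increment, each integrated over $s\in(0,t)$ and $y\in[0,1]$. We then handle the two terms separately with Lemma~\ref{lemma:space-time-increments-G}, items (i) and (iii).

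\emph{Space term.} By item~(i) of Lemma~\ref{lemma:space-time-increments-G}, for $\alpha\in(0,1/2)$ we have
\[
\int_0^1 (G_{t-s}(x,y)-G_{t-s}(\bar x,y))^2\,dy\le C_{\alpha,T}|x-\bar x|^{2\alpha}(t-s)^{-1/2-\alpha}.
\]
Since $1/2+\alpha<1$, the function $(t-s)^{-1/2-\alpha}$ is integrable on $(0,t)$. Its integral is bounded by $T^{1/2-\alpha}/(1/2-\alpha)$, which yields $C_{\alpha,T}|x-\bar x|^{2\alpha}$.

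\emph{Time term.} If $t=\bar t$ there is nothing to prove, so assume $t<\bar t$; then $t_0=t$. Fix $\beta=\alpha/2\in(0,1/4)$. Item~(iii) applies for every $s\in[0,t)$ (the condition $\theta<t_0$ there is exactly $s<t$) and gives
\[
\int_0^1 (G_{t-s}(\bar x,y)-G_{\bar t-s}(\bar x,y))^2\,dy\le C_{\beta,T}|t-\bar t|^{2\beta}(t-s)^{-1/2-2\beta}=C_{\alpha,T}|t-\bar t|^{\alpha}(t-s)^{-1/2-\alpha}.
\]
Integrating over $s\in(0,t)$ gives $C_{\alpha,T}|\bar t-t|^{\alpha}$, again because $1/2+\alpha<1$.

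Adding the two estimates proves the corollary. The only delicate point is matching exponents: the time exponent $\beta$ in item~(iii) must satisfy $\beta<1/4$, and this is precisely why the time Hölder exponent comes out as $\alpha$ (equal to $2\beta$) rather than $2\alpha$. The integrability of $(t-s)^{-1/2-\alpha}$ near $s=t$ is what forces the restriction $\alpha<1/2$.
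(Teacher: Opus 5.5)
Your proposal is correct and follows essentially the paper's argument: split through the intermediate point $(t,\bar x)$, bound the space increment with item~(i) and the time increment with item~(iii) of Lemma~\ref{lemma:space-time-increments-G} taking $\beta=\alpha/2$, and integrate $(t-s)^{-1/2-\alpha}$ over $(0,t)$. The only cosmetic difference is that the paper applies the triangle inequality in $L^2(dy\,ds)$ and then squares, which is equivalent to your pointwise use of $(a+b)^2\le 2a^2+2b^2$.
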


\begin{proof}
From \eqref{eq:G-space-increment-L2}--\eqref{eq:weighted-time-increment}, note that, for $\alpha \in (0,1/2), \beta \in (0,1/4)$
\begin{equation} \label{eq:estimate-i1}
\begin{split}
\int_0^t \int_0^1 \left(G_{t-s}(x,r)-G_{t-s}(\bar x,r)\right)^2\,dr ds
&\;\le\;
C_{\alpha,T} \,|x-\bar x|^{2\alpha}\,\int_0^t (t-s)^{-1/2-\alpha} ds \\
&\;\le C_{\alpha,T} |x-\bar x|^{2\alpha}, \\
\int_0^t \int_0^1
\left(G_{t-s}(\bar x,r)-G_{\bar t-s}(\bar x,r)\right)^2\,
\,dr \,ds
&\;\le\;
C_{\beta,T} \,|t-\bar t|^{2\beta}\int_0^t \,(t_0-s)^{-1/2-2\beta} ds \\
&\;\le C_{\beta,T} \,|t-\bar t|^{2\beta}.
\end{split}
\end{equation}
Moreover, the triangle inequality for the $L^{2}(dy ds)$ norm gives us that 
\begin{flalign*}
\|G_{t-s}(x,y)-&G_{\bar t-s}(\bar x,y)\|_{L^2(d y d s )} 
\\
& =\|\left(G_{t-s}(x,y)-G_{t-s}(\bar{x},y)\right)+ \left(G_{t-s}(\bar{x},y)-G_{\bar{t}-s}(\bar{x},y)\right) \|_{L^2(dy ds )} 
\\
& \leq \|G_{t-s}(x,y)-G_{t-s}(\bar{x},y)\|_{L^2(d y ds)} +\|G_{ t-s}(\bar{x},y)-G_{\bar{t}-s}(\bar{x},y) \|_{L^2(dy ds )}, 
\end{flalign*}
and thus squaring this inequality and leveraging \eqref{eq:estimate-i1} with $\alpha = 2 \beta \in (0,1/2)$,
\begin{flalign*}
\int_0^t \int_0^1&|G_{t-s}(x, y)-G_{\bar{t}-s}(\bar{x}, y)|^{2} \mathrm{d} y \mathrm{d}s =  \|G_{t-s}(x,y)-G_{\bar t-s}(\bar x,y)\|^2_{L^2(dyds )}  \\
&\leq C \left(\|G_{t-s}(x,y)-G_{t-s}(\bar{x},y)\|^{2}_{L^2(d y ds)} +\|G_{ t-s}(\bar{x},y)-G_{\bar{t}-s}(\bar{x},y)\|^2_{L^2(dy ds )}\right) \\
&\leq C_{\alpha,T} \left( |\bar{x}-x|^{2\alpha}+|\bar{t}-t|^{\alpha} \right) .
\end{flalign*}
This concludes the proof.
\end{proof}

\subsubsection{Estimates for the Malliavin derivative} \label{subsubsec:estimates-malliavin-kappa=0}

The following key estimate provides a pointwise bound for the $k$-th moment of the Malliavin derivative in terms of the heat kernel on $\RR$. The proof follows  a standard strategy (see, e.g., Theorem~6.4 in \cite{ChenKhoshnevisanNualartPu2021EJP690}), with three differences: 
(i) our model includes a drift term $b$, 
(ii) it is driven by space--time white noise in spatial dimension $d=1$, and 
(iii) it is posed on the compact spatial interval $[0,1]$ with Dirichlet or Neumann heat kernel.

Since we work on a finite time horizon, both the Neumann and Dirichlet heat kernels on $[0,1]$ are bounded above by a constant multiple of the heat kernel on $\RR$ (see, e.g., Lemma~\ref{lem:Neumann-Gaussian-|x-y|} for the Neumann case and \cite[(2.86)]{Nua06book} for the Dirichlet case). For the Dirichlet kernel, one can take this multiplicative constant equal to $1$. We prove the estimate using the Dirichlet kernel; the Neumann case follows by the same calculations.

For all $T>0$ and $k\ge 2$, set
\begin{align}\label{eq:CTk-gamma}
C_{T,k} 
&\doteq 
\sup_{n\ge 0}\ \sup_{(t,x)\in[0,T]\times[0,1]}
\left(\mathbb{E} |\sigma\left(u_n(t,x)\right) |^{k}\right)^{\frac{1}{k}}
\;<\;\infty,
\nonumber\\
\gamma 
&\doteq 6\left(T\,\Lip(b)^2+\left(z_k\Lip(\sigma)\right)^2\right),
\end{align}
where $u_n$ denotes the $n$-th Picard iterate, and $\{z_k\}_{k\ge 2}$ are the constants in the BDG inequality, satisfying
\begin{equation}\label{eq:zk}
z_2=1
\qquad\text{and}\qquad
z_k\le 2\sqrt{k}
\qquad\text{for every }k>2.
\end{equation}
Notice that under our assumption of $\sigma$ being upper bounded we have $C_{T,k} \leq \sup_{u \in \mathbb{R}} \sigma(u).$

Finally, define a sequence of functions $\{h_n\}_{n\ge 0}$ by $h_0(t)\equiv 1$ and
\begin{equation}\label{eq:hn}
h_n(t) 
\doteq 
\int_0^t h_{n-1}(s)\,p_{t-s}(0)\,ds
=
\frac{2^{-n}\,t^{n/2}}{\Gamma(\frac n2+1)},
\qquad t>0,\ n\ge 1.
\end{equation}
The explicit expression follows from the identity
\[
\int_0^t s^{\alpha-1}(t-s)^{-1/2}\,ds
=
t^{\alpha-1/2}\,
\frac{\Gamma(\alpha)\Gamma(\tfrac12)}{\Gamma(\alpha+\tfrac12)},
\qquad \alpha>0,
\]
and the fact that $p_{t}(0)=(4\pi t)^{-1/2}$. By induction, $h_n$ is nondecreasing for all $n\ge 0$.

We also use the following lemma (verbatim Lemma~6.6 in \cite{ChenKhoshnevisanNualartPu2021EJP690}), rewritten for the heat kernel associated with $\Delta$.

\begin{lemma}\label{lem:inequality-g}
Fix $T>0$. For every nondecreasing function $g:[0,T]\to\RR_+$ and all $t\in(0,T)$ and $y\in\RR $,
\[
\int_0^t g(s)\,p_{\,s(t-s)/t}(y)\,ds
\le
2^{3/2}\int_0^t g(s)\,p_{t-s}(y)\,ds.
\]
\end{lemma}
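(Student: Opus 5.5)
The plan is to split the integral at the midpoint $s=t/2$. On the half $[t/2,t]$ the variance $s(t-s)/t$ is comparable to $t-s$, so the kernels can be compared pointwise. The other half $[0,t/2]$ will be reflected onto the first one, using the symmetry $s\mapsto t-s$ of $s(t-s)/t$ together with the monotonicity of $g$.

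\emph{Step 1 (pointwise comparison on $[t/2,t]$).} For $s\in[t/2,t)$ set $\tau:=s(t-s)/t$. Since $s/t\in[1/2,1]$, we have $\tfrac12(t-s)\le \tau\le t-s$. From the explicit form \eqref{eq:heatkernelonR}, $p_\tau(y)=(4\pi\tau)^{-1/2}e^{-y^2/(4\tau)}$.
\begin{itemize}
\item The lower bound $\tau\ge (t-s)/2$ gives $(4\pi\tau)^{-1/2}\le \sqrt2\,(4\pi(t-s))^{-1/2}$.
\item The upper bound $\tau\le t-s$ gives $e^{-y^2/(4\tau)}\le e^{-y^2/(4(t-s))}$.
\end{itemize}
Hence
\[
p_{s(t-s)/t}(y)\le \sqrt2\,p_{t-s}(y),\qquad s\in[t/2,t),\ y\in\RR,
\]
and, since $g\ge0$,
\[
\int_{t/2}^t g(s)\,p_{s(t-s)/t}(y)\,ds\le \sqrt2\int_{t/2}^t g(s)\,p_{t-s}(y)\,ds .
\]

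\emph{Step 2 (reflection of $[0,t/2]$).} In $\int_0^{t/2} g(s)\,p_{s(t-s)/t}(y)\,ds$ substitute $s\mapsto t-s$. The quantity $s(t-s)/t$ is invariant, so the integral becomes
\[
\int_{t/2}^t g(t-s)\,p_{s(t-s)/t}(y)\,ds .
\]
For $s\ge t/2$ we have $t-s\le s$, so $g(t-s)\le g(s)$ because $g$ is nondecreasing. Applying the pointwise bound of Step 1 then gives
\[
\int_0^{t/2} g(s)\,p_{s(t-s)/t}(y)\,ds\le \sqrt2\int_{t/2}^t g(s)\,p_{t-s}(y)\,ds .
\]

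\emph{Step 3 (conclusion).} Adding the two halves yields the bound $2\sqrt2\int_{t/2}^t g(s)\,p_{t-s}(y)\,ds$. Since the integrand is nonnegative, this is at most $2^{3/2}\int_0^t g(s)\,p_{t-s}(y)\,ds$, which is the claim.

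There is no real obstacle; the only point needing care is Step 2. There the monotonicity of $g$ is used in exactly the right direction, which is why the reflection must be onto $[t/2,t]$, where the kernel $p_{t-s}$ is the singular one. The endpoints $s=0$ and $s=t$ form a null set and do not matter.
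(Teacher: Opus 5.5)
Your argument is correct. On $[t/2,t]$ the comparison $\tfrac12(t-s)\le s(t-s)/t\le t-s$ gives $p_{s(t-s)/t}(y)\le\sqrt2\,p_{t-s}(y)$, and the reflection $s\mapsto t-s$, together with the monotonicity of $g$, moves the other half of the integral onto $[t/2,t]$; this produces exactly the constant $2\cdot\sqrt2=2^{3/2}$. The paper gives no proof of its own, quoting the statement verbatim from Lemma~6.6 of \cite{ChenKhoshnevisanNualartPu2021EJP690}, and your midpoint split with reflection is the natural argument behind that constant, so your write-up supplies the self-contained justification the paper leaves to the reference.
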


The following theorem plays a crucial role in providing bounds for the Malliavin derivative in the sequel.

\begin{theorem}\label{thm:malliavin-derivative}
Let $u$ denote the mild solution to \eqref{sCH}, and let $G_{t}$ denote either $G_{t}^{\operatorname{D}}$ or $G_{t}^{\operatorname{N}}$. Then,
\[
u(t,x)\in\bigcap_{k\ge 2}\mathbb{D}^{1,k}
\qquad\text{for every }(t,x)\in(0,T)\times[0,1].
\]
Moreover, for fixed $T>0$, if $t\in(0,T)$ and $x\in[0,1]$, then
\begin{equation}\label{eq:Lp_norm}
\E\!\left(\left|D_{s,y}u(t,x)\right|^{k}\right)^{1/k}
\le
\sqrt{6}\,C_{T,k}\,
\exp\!\left(\gamma^2 t/2\right)\,
p_{t-s}(x-y),
\end{equation}
for almost every $(s,y)\in(0,t)\times[0,1]$,
where $C_{T,k}$ and $\gamma$ are defined in \eqref{eq:CTk-gamma} and $z_k$ in \eqref{eq:zk}.
\end{theorem}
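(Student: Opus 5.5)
We argue along the Picard scheme, as in Theorem~6.4 of \cite{ChenKhoshnevisanNualartPu2021EJP690}. Let $u_0(t,x)=\int_0^1 G_t(x,y)u_0(y)\,dy$ and let $u_{n+1}$ be defined by the right-hand side of \eqref{weakCH} with $u_n$ in place of $u$. Standard arguments (Proposition 1.2.4 of \cite{Nua06book} and closability of $D$) show inductively that $u_n(t,x)\in\mathbb{D}^{1,k}$ for every $k\ge2$. They also show that
\[
D_{s,y}u_{n+1}(t,x)=G_{t-s}(x,y)\sigma(u_n(s,y))+\int_s^t\!\!\int_0^1 G_{t-\theta}(x,r)m_n(\theta,r)D_{s,y}u_n(\theta,r)\,dr\,d\theta+\int_s^t\!\!\int_0^1 G_{t-\theta}(x,r)\hat m_n(\theta,r)D_{s,y}u_n(\theta,r)\,W(dr,d\theta),
\]
with $|m_n|\le \Lip(b)$ and $|\hat m_n|\le\Lip(\sigma)$.

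The core of the argument is a uniform-in-$n$ bound of the form
\[
\|D_{s,y}u_n(t,x)\|_k\le \sqrt6\,C_{T,k}\,p_{t-s}(x-y)\sum_{j=0}^{n}\gamma^{j}\,h_j(t-s)^{1/2}\cdot(\text{const})^j,
\]
or more precisely a squared version, which we prove by induction. Writing $f_n(s,y;t,x):=\|D_{s,y}u_n(t,x)\|_k^2$, we use $(a+b+c)^2\le 3(a^2+b^2+c^2)$, Minkowski's inequality for the drift term (with Cauchy--Schwarz in $\theta$, producing the factor $T\Lip(b)^2$), and BDG for the stochastic term (producing $z_k^2\Lip(\sigma)^2$). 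We also use $G\le p$ (Dirichlet, with constant $1$; Neumann, with constant $C_T$ absorbed into the definitions). This should give
\[
f_{n+1}(s,y;t,x)\le 3C_{T,k}^2p_{t-s}(x-y)^2+\gamma\int_s^t\!\!\int_0^1 p_{t-\theta}(x-r)^2 f_n(s,y;\theta,r)\,dr\,d\theta .
\]
We then plug in the inductive hypothesis $f_n\le 6C_{T,k}^2 p_{\theta-s}(r-y)^2\,H_n(\theta-s)$, with $H_n$ nondecreasing. The key identity is
\[
p_{t-\theta}(x-r)^2p_{\theta-s}(r-y)^2=\frac{1}{\sqrt{8\pi (t-\theta)}}\frac{1}{\sqrt{8\pi(\theta-s)}}\,p_{(t-\theta)/2}(x-r)\,p_{(\theta-s)/2}(r-y).
\]
After extending the $r$-integral to $\RR$, the semigroup property gives $p_{(t-s)/2}(x-y)$. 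This can be rewritten as $p_{t-s}(x-y)^2\cdot\sqrt{t-s}/\sqrt{(t-\theta)(\theta-s)}$ times a constant, which is exactly the Lemma~\ref{lem:inequality-g} structure $p_{\tau(t'-\tau)/t'}(0)$. Lemma~\ref{lem:inequality-g} (applied at $y=0$ with $t'=t-s$) then converts the integral into $2^{3/2}\int_0^{t-s}H_n(\tau)p_{t-s-\tau}(0)\,d\tau$. Iterating produces the functions $h_j$ from \eqref{eq:hn}, giving $f_n\le 6C_{T,k}^2p_{t-s}(x-y)^2\sum_{j}(c\gamma)^jh_j(t-s)$. Each $h_j$ is nondecreasing, so the induction closes.

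Next we sum the series: $\sum_j \gamma^j 2^{-j}t^{j/2}/\Gamma(j/2+1)$ is a Mittag-Leffler-type function bounded by $C\exp(\gamma^2 t/2)$ up to constants, hence by $\exp(\gamma^2t)$ after taking square roots appropriately, which yields \eqref{eq:Lp_norm} for $u_n$ uniformly in $n$. Finally, $u_n(t,x)\to u(t,x)$ in $L^k$, and $\sup_n\E\|Du_n(t,x)\|_{\mathcal H}^k<\infty$ follows from the bound, since $\int_0^t\!\int p_{t-s}^2\,dy\,ds<\infty$ via Minkowski with $k/2\ge1$. Lemma 1.2.3 of \cite{Nua06book} then gives $u(t,x)\in\mathbb{D}^{1,k}$ and $Du_n\to Du$ weakly. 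Since the pointwise bound is preserved under weak limits, tested against nonnegative functions in $(s,y)$ and by duality with $L^{k/(k-1)}(\Omega)$, the estimate transfers to $D_{s,y}u(t,x)$ for a.e.\ $(s,y)$.

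I expect the main obstacle to be the exact constant bookkeeping in the inductive step. Matching $p^2\cdot p^2$ to the form $p_{s(t-s)/t}$ needed for Lemma~\ref{lem:inequality-g} must be done carefully, the boundary images for the Neumann kernel must be handled through $G\le C_Tp$, and the $\Gamma$-series must be summed into $\exp(\gamma^2t/2)$. The constants $\sqrt6$ and $\gamma$ are designed precisely so that this works, and I would follow Theorem~6.4 of \cite{ChenKhoshnevisanNualartPu2021EJP690} line by line there, adding only the drift term.
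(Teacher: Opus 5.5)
Your proposal is correct and follows the paper's proof essentially step by step: the Picard scheme, the squared-moment induction with the three-term bound (Minkowski/Cauchy--Schwarz for the drift, BDG for the noise), Lemma~\ref{lem:inequality-g}, summation of the $h_j$, closability, and the duality/weak-limit transfer. The only cosmetic difference is that you evaluate $\int p^2p^2$ via $p_\tau^2=(8\pi\tau)^{-1/2}p_{\tau/2}$ and the semigroup property instead of the bridge factorization, which gives the same factor $2^{-1/2}p_{(\theta-s)(t-\theta)/(t-s)}(0)$; on the constants, start the induction with $3C_{T,k}^2$ rather than $6C_{T,k}^2$, since with your recursion factor $\gamma$ the series is $\sum_j(2\gamma)^jh_j(t)\le 2e^{\gamma^2t}$, which yields exactly $\sqrt6\,C_{T,k}e^{\gamma^2t/2}$.
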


\begin{proof} Let $u_0(t\,,x) \doteq \int_0^1 G_{t}(x,y) u_0(y) dy$ for all $t>0$ and $x\in [0,1]$, and introduce the Picard iterations:
\begin{flalign*}
u_{n+1}(t\,,x) :=& u_0(t\,,x) + \int_{[0,t]\times [0,1]} G_{t-s} (x,y) b(u_n(s\,,y)) \d s\,\d y \\
&+ \int_{[0,t]\times [0,1]} G_{t-s} (x,y) \sigma(u_n(s\,,y)) \,W(\d s\,\d y),
\end{flalign*}
for all $n\ge0$, $t>0$, and $x\in[0,1]$. 
	
We claim that $u_{n}(t\,,x)\in \mathbb{D}^{1,k}$ for every $(t\,,x) \in (0\,,T)\times [0,1]$ and $k\ge 2$ and that, moreover,
	\begin{equation} \label{e2}
		\E\!\left(\left| D_{s,y} u_{n}(t\,,x) \right|^{k}\right)^{1/k} \le \sqrt{3} C_{T,k}  \,
		p_{t-s}(x-y)\left(\sum_{i = 0}^{n}\gamma^ih_i(t - s)\right)^{1/2},
	\end{equation}
	for almost every $(s\,,y) \in (0\,,t) \times [0,1]$. We proceed with induction. The left-hand side of \eqref{e2} is equal to zero, so the statement in \eqref{e2} is true for $n= 0$ since the process $u_0(t,x)$ is deterministic.
	Now suppose \eqref{e2} holds for  $n$, and we derive its validity for $n+1$. 
	
As in the proof of Proposition 2.4.4 of Nualart \cite{Nua06book}, $b(u_n(t\,,x)), \sigma(u_n(t\,,x))\in\mathbb{D}^{1,k}$  for every $(t\,,x) \in(0,T)\times [0,1]$;
moreover,
\[
D( \sigma( u_n(t\,,x))) = \Sigma_n Du_n(t\,,x), \quad 		D( b( u_n(t\,,x))) = B_n Du_n(t\,,x), \qquad\text{a.s.,}
\]
where $\Sigma_n,B_n$ are adapted processes (on $[0,T] \times [0,1]$), uniformly bounded by the Lipschitz constants of $\sigma$ and $b$ respectively.
    
Furthermore, we have the identity 
	\begin{align*}
		D_{r,z} u_{n+1}(t\,,x) 
		 &= G_{t-r} (x,z) \sigma(u_n(r\,,z)) + \int_{(r,t)\times [0,1]} G_{t-s} (x,y)
		 	\Sigma_n  D_{r,z} u_n(s\,,y )\,W(\d s \,\d y) \\
      &\quad + \int_{(r,t)\times [0,1]} G_{t-s} (x,y)
		 	B_n  D_{r,z} u_n(s\,,y )\, \d s \,\d y
			\qquad\text{a.s.,}
	 \end{align*}
	 and so 
	 \begin{align*}
		 & \E\!\left(\left|   D_{r,z} u_{n+1}(t\,,x)  \right|^{k}\right)^{1/k} \le
			G_{t-r} (x,z)  \E\!\left(\left| \sigma(u_n(r\,,z))  \right|^{k}\right)^{1/k} \\
		&\qquad \qquad + \E\!\left(\left|  \int_{(r,t)\times [0,1]} G_{t-s} (x,y)
		 	B_n  D_{r,z} u_n(s\,,y )\, \d s \,\d y \right|^{k}\right)^{1/k} \\
        &\qquad \qquad+ \E\!\left(\left|  \int_{(r,t)\times [0,1]}
			G_{t-s} (x,y)  \Sigma_n  D_{r,z} u_n(s\,,y )\,W(\d s \,\d y)
			\right|^{k}\right)^{1/k},
	\end{align*}
	for every integer $k\ge2$. We deal first with the stochastic integral term. The Burkholder--Davis--Gundy and Minkowski integral inequality imply that
	\begin{align*}
		& \E \left( \left|  \int_{(r,t)\times [0,1]} G_{t-s} (x,y)
			\Sigma_n  D_{r,z} u_n(s\,,y )\,W(\d s \,\d y)\right |^k \right)  \\
		& \le \left[z_k{\rm Lip} (\sigma)\right]^k  \E \left( \left|
			\int_r^t  \d s \int_{0}^1 \d y \
			G^2_{t-s}(x,y)
			|D_{r,z} u_n(s\,,y)|^2  \right| ^{k/2} \right) \\
        &\le \left[ z_k {\rm Lip} (\sigma)\right]^k   \left[  \int_r^t 
			\int_{0}^1   
			G^2_{t-s}(x,y)
			\E\!\left(\left| D_{r,z} u_n(s\,,y) \right|^{k}\right)^{2/k} \d y  \d s \right] ^{k/2}.
	\end{align*}
    In addition, the Minkowski integral inequality yields
    \begin{multline*}
    \E\!\left(\left|  \int_{(r,t)\times [0,1]} G_{t-s} (x,y)
		 	B_n  D_{r,z} u_n(s\,,y ) \, \d s \, \d y
			\right|^{k}\right)^{1/k}  \\
            \le {\rm Lip} (b)  \int_r^t 
			\int_{0}^1   
			G_{t-s}(x,y)
			\E\!\left(\left| D_{r,z} u_n(s\,,y) \right|^{k}\right)^{1/k} \d y \, \d s.
    \end{multline*}
The last three displays give the following inequality on the Malliavin derivative of
	$u_{n+1}(t\,,x)$:
\begin{align}
\E\!\left(\left|D_{r,z} u_{n+1}(t,x)\right|^{k}\right)^{1/k}
&\le C_{T,k}\,G_{t-r}(x,z) \nonumber\\
&\quad + {\rm Lip}(b)\int_r^t \int_{0}^1
G_{t-s}(x,y)\, \E\!\left(\left|D_{r,z}u_n(s,y)\right|^{k}\right)^{1/k}\,\mathrm{d}y\,\mathrm{d}s \nonumber\\
&\quad + z_k\,{\rm Lip}(\sigma)\left[
\int_r^t \int_0^1
G_{t-s}^2(x,y)\,\E\!\left(\left|D_{r,z}u_n(s,y)\right|^{k}\right)^{2/k}\,\mathrm{d}y\,\mathrm{d}s
\right]^{1/2}. \label{eq:555}
\end{align}
Since \eqref{e2} holds for $n$, we have
\begin{equation}\label{W}
\begin{split}
&\int_r^t\int_0^1
G_{t-s}(x,y)^2\,\E\!\left(\left|D_{r,z}u_n(s,y)\right|^{k}\right)^{2/k}\,dy\,ds \\
&\le 3C_{T,k}^2\sum_{i=0}^{n}\gamma^i
\int_r^t h_i(s-r)\int_0^1 p_{t-s}(x-y)^2\,p_{s-r}(y-z)^2\,dy\,ds \\
&= 3C_{T,k}^2\,p_{t-r}(x-z)^2\sum_{i=0}^{n}\gamma^i
\int_r^t h_i(s-r)\int_0^1
p_{\frac{(s-r)(t-s)}{t-r}} \!\left(y-z-\frac{s-r}{t-r}(x-z)\right)^{\!2}\,dy\,ds \\
&\le 3C_{T,k}^2\,p_{t-r}(x-z)^2\sum_{i=0}^{n}\gamma^i
\int_r^t h_i(s-r)\int_{\RR}
p_{\frac{(s-r)(t-s)}{t-r}} \!\left(y-z-\frac{s-r}{t-r}(x-z)\right)^{\!2}\,dy\,ds \\
&= 3C_{T,k}^2\,p_{t-r}(x-z)^2\sum_{i=0}^{n}\gamma^i
\int_r^t h_i(s-r)\,p_{2(s-r)(t-s)/(t-r)}(0)\,ds \\
&= 3C_{T,k}^2\,p_{t-r}(x-z)^2\sum_{i=0}^{n}\gamma^i
\int_0^{t-r} h_i(s)\,p_{2s(t-r-s)/(t-r)}(0)\,ds.
\end{split}
\end{equation}
In the first inequality we used the pointwise bound $G_{t-s}(x,y)\le p_{t-s}(x-y)$, together with \eqref{e2}.
For the third line we used the following identity: for $0<a<b$ and $x,y,z\in\RR$,
\[
p_{b-a}(x-y)\,p_a(y-z)
=
p_b(x-z)\,p_{a(b-a)/b}\!\left(y-z-\frac{a}{b}(x-z)\right).
\]
The fourth line follows by extending the integral from $[0,1]$ to $\RR$.
For the fifth line we used that, for every $\tau>0$,
\[
\int_{\RR} p_\tau(x)^2\,dx
=
\int_{\RR} p_\tau(x)\,p_\tau(-x)\,dx
=
(p_\tau * p_\tau)(0)
=
p_{2\tau}(0),
\]
by symmetry and the semigroup (convolution) property of the heat kernel. The last line
follows from the change of variables $s\mapsto s-r$.

Next, since each $h_i$ is nondecreasing, Lemma~\ref{lem:inequality-g} (with $y=0$)
implies, for $\tau=t-r$,
\begin{equation}\label{eq:1001}
\begin{split}
\int_0^{\tau} h_i(s)\,p_{2s(\tau-s)/\tau}(0)\,ds
&=2^{-1/2}\int_0^{\tau} h_i(s)\,p_{s(\tau-s)/\tau}(0)\,ds \\
&\le 2^{-1/2}\cdot 2^{3/2}\int_0^{\tau} h_i(s)\,p_{\tau-s}(0)\,ds \\
&= 2\int_0^{\tau} h_i(s)\,p_{\tau-s}(0)\,ds
=2\,h_{i+1}(\tau),
\end{split}
\end{equation}
where we used the heat kernel scaling $p_{2a}(0)=2^{-1/2}p_a(0)$ and the recursive definition of $h_i$ in \eqref{eq:hn}.
Combining \eqref{W} and \eqref{eq:1001} yields
\begin{equation}\label{eq:1002}
\int_r^t\int_0^1
G_{t-s}(x,y)^2\,\E\!\left(\left|D_{r,z}u_n(s,y)\right|^{k}\right)^{2/k}\,dy\,ds
\le
6\,C_{T,k}^2\,p_{t-r}(x-z)^2\sum_{i=0}^{n}\gamma^i\,h_{i+1}(t-r).
\end{equation}

For the drift term, Minkowski's inequality together with Cauchy--Schwarz gives
\begin{flalign}\label{eq:1003}
\E\!&\left(\left|
\int_{(r,t)\times[0,1]} G_{t-s}(x,y)\,B_n(s,y)\,D_{r,z}u_n(s,y)\,ds\,dy
\right|^{k}\right)^{2/k} \nonumber\\
&\le \Lip(b)^2\left(\int_r^t\int_0^1 G_{t-s}(x,y)\,\E\!\left(\left|D_{r,z}u_n(s,y)\right|^{k}\right)^{1/k}\,dy\,ds\right)^{\!2} \nonumber\\
&\le (t-r)\,\Lip(b)^2\int_r^t\int_0^1
G_{t-s}(x,y)^2\,\E\!\left(\left|D_{r,z}u_n(s,y)\right|^{k}\right)^{2/k}\,dy\,ds \nonumber\\
&\le 6\,T\,\Lip(b)^2\,C_{T,k}^2\,p_{t-r}(x-z)^2
\sum_{i=0}^{n}\gamma^i\,h_{i+1}(t-r),
\end{flalign}
where we used $(t-r)\le T$ and \eqref{eq:1002} in the last step.

Finally, using $(a+b+c)^2\le 3a^2+3b^2+3c^2$ for $a,b,c\ge 0$ and \eqref{eq:555},
together with \eqref{eq:1002}--\eqref{eq:1003}, we obtain
\begin{align*}
\E\!\left(\left|D_{r,z}u_{n+1}(t,x)\right|^{k}\right)^{2/k}
&\le 3C_{T,k}^2\,p_{t-r}(x-z)^2
+ 3\E\!\left(\left|\int_{(r,t)\times[0,1]} G_{t-s}(x,y)\,B_n\,D_{r,z}u_n\,ds\,dy\right|^{k}\right)^{2/k} \\
&\quad + 3\left(z_k\Lip(\sigma)\right)^2
\int_r^t\int_0^1 G_{t-s}(x,y)^2\,\E\!\left(\left|D_{r,z}u_n(s,y)\right|^{k}\right)^{2/k}\,dy\,ds \\
&\le 3C_{T,k}^2\,p_{t-r}(x-z)^2 \\
&\quad + 18\,T\,\Lip(b)^2\,C_{T,k}^2\,p_{t-r}(x-z)^2
\sum_{i=0}^{n}\gamma^i\,h_{i+1}(t-r) \\
&\quad + 18\left(z_k\Lip(\sigma)\right)^2 C_{T,k}^2\,p_{t-r}(x-z)^2
\sum_{i=0}^{n}\gamma^i\,h_{i+1}(t-r).
\end{align*}
More precisely,
\begin{equation}
\begin{split}
\E\!\left(\left|D_{r,z}u_{n+1}(t,x)\right|^{k}\right)^{2/k}
&\le 3C_{T,k}^2\,p_{t-r}(x-z)^2\\
&\qquad \times 
\left(1 + 6\left(T\,\Lip(b)^2+\left(z_k\Lip(\sigma)\right)^2\right)
\sum_{i=0}^{n}\gamma^i\,h_{i+1}(t-r)\right) \\
& = 3C_{T,k}^2\,p_{t-r}(x-z)^2
\left(1 + \sum_{i=0}^{n}\gamma^{i+1}\,h_{i+1}(t-r)\right) \\
&= 3C_{T,k}^2\,p_{t-r}(x-z)^2
\sum_{i=0}^{n+1}\gamma^{i}\,h_{i}(t-r),
\end{split}
\end{equation}
which is precisely \eqref{e2} for $n+1$.

Moreover, for $\gamma\ge0$ fixed, we can derive the uniform in $n$ bound 
\begin{equation} \label{eq:bound-gamma-hn} 
\sum_{n=0}^\infty \gamma^n h_n(t)\le 2\exp\!\left(\frac{\gamma^2 t}{4}\right). 
\end{equation} 
To obtain the bound in \eqref{eq:bound-gamma-hn}, write $a=\gamma\sqrt t/2$ and by using the form of $h_n$ in \eqref{eq:hn}, 
\[ \sum_{n=0}^\infty \gamma^n h_n(t) =\sum_{n=0}^\infty \frac{a^n}{\Gamma\!\left(\frac n2+1\right)} =\sum_{m=0}^\infty \frac{a^{2m}}{m!} +\sum_{m=0}^\infty \frac{a^{2m+1}}{\Gamma(m+3/2)} =e^{a^2}+e^{a^2}\operatorname{erf}(a), 
\]
where the odd-series identity is standard and can be verified by differentiating with respect to $a$ and using the value at $a=0$. Since $0\le \operatorname{erf}(a)\le 1$ for $a\ge0$, the bound follows. 

Finally, it remains to show that the estimate \eqref{eq:Lp_norm} holds for $u(t,x)$,
for fixed $(t,x)\in(0,T)\times[0,1]$. Fix $(t,x)$ and $k\ge 2$. By \eqref{e2} and the bound \eqref{eq:bound-gamma-hn},
\begin{equation}\label{eq:uniform-Dun}
\sup_{n\ge 0}\E\left(\|Du_n(t,x)\|_{\mathcal H}^k\right)<\infty,
\qquad \mathcal H \doteq L^2([0,T]\times[0,1]).
\end{equation}
Since $u_n(t,x)\to u(t,x)$ in $L^k(\Omega)$, the closability of the Malliavin derivative
(e.g.\ \cite[Lemma~1.5.3]{Nua06book}) implies that $u(t,x)\in\mathbb{D}^{1,k}$.
Moreover, extracting a subsequence if necessary, 
\begin{equation}\label{eq:weak-Dun}
Du_n(t,x)\rightarrow Du(t,x)\quad\text{weakly in }L^k(\Omega;\mathcal H).
\end{equation}

Let $\{\psi_\varepsilon\}_{\varepsilon>0}\subset C_c^\infty(\RR^2)$ be a standard approximation of the identity:
$\psi_\varepsilon\ge 0$ and $\int_{\RR^2}\psi_\varepsilon=1$.
For $(s,y)\in(0,t)\times(0,1)$ and $\varepsilon>0$, define
\[
D^\varepsilon_{s,y}u(t,x)
\doteq
\int_0^t\int_0^1 D_{s',y'}u(t,x)\,\psi_\varepsilon(s'-s,y'-y)\,dy'\,ds'
=
\big\langle Du(t,x),\psi_\varepsilon(\cdot-s,\cdot-y)\big\rangle_{\mathcal H}.
\]
For each fixed $\varepsilon>0$ and $(s,y)$, H\"older's inequality yields
\begin{equation}\label{eq:mollifier-duality}
\E\!\left(\left|D^\varepsilon_{s,y}u(t,x)\right|^{k}\right)^{1/k}
=
\sup_{\E\!\left(\left|F\right|^{k/(k-1)}\right)^{(k-1)/k}\le 1}
\left|\E\!\left[F\,D^\varepsilon_{s,y}u(t,x)\right]\right|.
\end{equation}
Fix $F\in L^{k/(k-1)}(\Omega)$ with $\E\!\left(\left|F\right|^{k/(k-1)}\right)^{(k-1)/k}\le 1$.
By \eqref{eq:weak-Dun}, for every $\varepsilon>0$,
\begin{equation}\label{eq:765}
\E\!\left[F\,D^\varepsilon_{s,y}u(t,x)\right]
=
\lim_{n\to\infty}
\E\!\left[F\,D^\varepsilon_{s,y}u_n(t,x)\right].
\end{equation}
Moreover, using \eqref{e2} and the fact that $\psi_\varepsilon$ has unit mass, we obtain
\begin{align}\label{eq:098}
\left|\E\!\left[F\,D^\varepsilon_{s,y}u_n(t,x)\right]\right|
&\le \E\!\left(\left|D^\varepsilon_{s,y}u_n(t,x)\right|^k\right)^{1/k} \nonumber\\
&\le \int_0^t\int_0^1 \E\!\left(\left|D_{s',y'}u_n(t,x)\right|^k\right)^{1/k}\,\psi_\varepsilon(s'-s,y'-y)\,dy'\,ds' \nonumber\\
&\le \sqrt{3}\,C_{T,k}\int_0^t\int_0^1
p_{t-s'}(x-y')\left(\sum_{i=0}^\infty \gamma^i h_i(t-s')\right)^{1/2}
\,\psi_\varepsilon(s'-s,y'-y)\,dy'\,ds'.
\end{align}
 Hence, letting $n\to\infty$ in \eqref{eq:098}
and using \eqref{eq:765}, we obtain for every $\varepsilon>0$,
\[
\left|\E\!\left[F\,D^\varepsilon_{s,y}u(t,x)\right]\right|
\le
\sqrt{3}\,C_{T,k}\,
\left(\psi_\varepsilon * \Phi\right)(s,y),
\quad \text{ where }
\Phi(s',y')\doteq p_{t-s'}(x-y')\left(\sum_{i=0}^\infty \gamma^i h_i(t-s')\right)^{1/2}.
\]

Taking the supremum over $\E\!\left(\left|F\right|^{k/(k-1)}\right)^{(k-1)/k}\le 1$ and using \eqref{eq:mollifier-duality} gives
\[
\E\!\left(\left|D^\varepsilon_{s,y}u(t,x)\right|^{k}\right)^{1/k}
\le
\sqrt{3}\,C_{T,k}\,(\psi_\varepsilon * \Phi)(s,y).
\]
Since $\Phi$
is continuous on $(0,t)\times(0,1)$, the convolution with $\psi_\varepsilon$ converges pointwise to its value
at $(s,y)$ as $\varepsilon\downarrow 0$.
Finally, along a sequence $\varepsilon_m\downarrow 0$, we have
$D^{\varepsilon_m}_{s,y}u(t,x)\to D_{s,y}u(t,x)$ for a.e.\ $(s,y, \omega)$
(after choosing a representative of $Du(t,x)$), and $(\psi_{\varepsilon_m}*\Phi)(s,y)\to \Phi(s,y)$.
Therefore, by Fatou's lemma,
\[
\E\!\left(\left|D_{s,y}u(t,x)\right|^{k}\right)^{1/k}
\le
\sqrt{3}\,C_{T,k}\,
p_{t-s}(x-y)\left(\sum_{i=0}^\infty \gamma^i h_i(t-s)\right)^{1/2},
\qquad\text{for a.e.\ }(s,y)\in(0,t)\times(0,1).
\]
Applying \eqref{eq:bound-gamma-hn} yields \eqref{eq:Lp_norm}, completing the proof.
\end{proof}

\subsection{For the case $\kappa > 0$} \label{subsec:kappa>0}

In this case, the Green's function $H$ is given by \eqref{eq:H-def}, while the Malliavin derivative is still given by \eqref{PPddfo3}, with $H$ replacing $\mathcal{G}$. In Section \ref{subsubsec:green-H}, we provide some useful estimates for $H$. In Section \ref{subsubsec:Malliavin-H}, we prove some bounds for the Malliavin derivative. 

\subsubsection{Estimates for the kernel} \label{subsubsec:green-H}
The following lemmas give some $L^\infty$, $L^2$ and pointwise estimates for $H$.

\begin{lemma} \label{pointwiseboundonH}
For every $T>0$ there exist constants $C,c>0$ such that for all
$0<t\le T$ and all $x,y\in[0,1]$,
\[
|H_t(x,y)|
\le
C_{T,\rho} t^{-1/4}
\left(
e^{-c|x-y|^{4/3}/t^{1/3}}
+
e^{-c\,m(x,y)^{4/3}/t^{1/3}}
\right),
\]
where
\begin{equation} \label{def:m}
    m(x,y):=\min(x+y,\,2-x-y).
\end{equation}
\end{lemma}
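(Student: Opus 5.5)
The plan is to work from the image representation \eqref{eq:images-Neumann}, $H_t(x,y)=\sum_{m\in\ZZ}\bigl(g_t^\rho(x-y+2m)+g_t^\rho(x+y+2m)\bigr)$, and reduce everything to a pointwise bound for the whole-line kernel $g_t^\rho$ from \eqref{eq:gtrho}.

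\textbf{Step 1: the whole-line kernel.} I will show that, for $0<t\le T$ and $z\in\RR$,
\[
|g_t^\rho(z)|\le C_{T,\rho}\,t^{-1/4}e^{-c|z|^{4/3}/t^{1/3}}.
\]
The route is to shift the Fourier contour. For $|z|\le t^{1/4}$ the crude bound $|g_t^\rho(z)|\le \frac{1}{2\pi}\int_\RR e^{-t\xi^4}d\xi=Ct^{-1/4}$ suffices, since the exponential is bounded below there.

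For $|z|>t^{1/4}$, assume $z>0$ by symmetry and move the contour to $\xi\mapsto \xi+i\eta$ with $\eta=\lambda (z/t)^{1/3}$, where $\lambda>0$ is small. Entireness and decay in horizontal strips justify the shift. Then I expand
\[
\Re\bigl((\xi+i\eta)^4\bigr)=\xi^4-6\xi^2\eta^2+\eta^4 \quad\text{and}\quad \Re\bigl(\rho(\xi+i\eta)^2\bigr)=\rho(\xi^2-\eta^2).
\]
Bounding $6\xi^2\eta^2\le \tfrac12\xi^4+18\eta^4$ gives
\[
|e^{-t(\cdot)}e^{iz(\xi+i\eta)}|\le e^{-t\xi^4/2}\,e^{17t\eta^4+\rho t\eta^2-z\eta}.
\]
With this choice, $t\eta^4=\lambda^4 z^{4/3}t^{-1/3}$ and $z\eta=\lambda z^{4/3}t^{-1/3}$. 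Choosing $\lambda$ small, e.g.\ $17\lambda^3<1/2$, yields a net factor $e^{-\frac{\lambda}{2}z^{4/3}t^{-1/3}}$.

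The remaining term $\rho t\eta^2=\rho\lambda^2z^{2/3}t^{1/3}$ is handled using $z\le 3$ (all arguments used below satisfy this after discarding far images, which are dealt with separately) and $t\le T$, so it is bounded by a constant $C_{T,\rho}$. Integrating $e^{-t\xi^4/2}$ over $\xi$ gives the factor $t^{-1/4}$.

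For large $|z|$, needed when summing over images, I instead use $e^{\rho t\eta^2}\le e^{\rho^2 t/(4\epsilon)}e^{\epsilon t\eta^4}$. This absorbs the term into the quartic part, at the cost of a constant depending on $T$ and $\rho$. This is the main technical step; the rest is bookkeeping.

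\textbf{Step 2: summing the images.} For the $x-y$ family, mimic \eqref{eq:090}: $|x-y+2k|\ge |x-y|+2(|k|-1)$ for $k\neq0$. Since $(a+b)^{4/3}\ge a^{4/3}+b^{4/3}$ for $a,b\ge0$, each term with $|k|\ge2$ is bounded by
\[
t^{-1/4}e^{-c|x-y|^{4/3}t^{-1/3}}e^{-c(2|k|-2)^{4/3}T^{-1/3}},
\]
which is summable in $k$ uniformly in $t\le T$. The terms with $|k|=1$ satisfy $|x-y\pm2|\ge 1\ge|x-y|$, so they are directly bounded by the same exponential.

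For the $x+y$ family, choose $k_0\in\{0,-1\}$ achieving $m(x,y)$ from \eqref{def:m}. Arguing as in \eqref{eq:xyplus-part-T}, $|x+y+2(k_0+j)|\ge m(x,y)+2(|j|-1)_+$, and the same summation gives the term $e^{-c\,m(x,y)^{4/3}/t^{1/3}}$.

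Adding the two families yields the claimed bound. Note that keeping $m(x,y)$ separately is not even necessary, since $m\ge|x-y|$, but it is what the statement asserts. Finally, absolute values are needed throughout because $g_t^\rho$ is not positive.
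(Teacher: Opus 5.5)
Your proposal is correct and has the same architecture as the paper's proof. Both start from the image representation \eqref{eq:images-Neumann}, bound each whole-line term $g_t^\rho$ pointwise, and sum the images using $|x-y+2k|\ge |x-y|+2(|k|-1)$, the analogous bound for the $x+y$ family, and superadditivity of $a\mapsto a^{4/3}$. The paper phrases the summation through $\operatorname{dist}(s,2\ZZ)$ for $s\in[-1,1]$, which is the same bookkeeping.

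The genuine difference is your Step 1. The paper simply imports $|g_t^\rho(z)|\le C_{T,\rho}t^{-1/4}e^{-c|z|^{4/3}/t^{1/3}}$ from Barbatis--Davies. You instead derive it by shifting the Fourier contour to $\Im\xi=\lambda(z/t)^{1/3}$. Your computation checks out:
\begin{itemize}
\item $\Re\bigl((\xi+i\eta)^4\bigr)=\xi^4-6\xi^2\eta^2+\eta^4$;
\item $6\xi^2\eta^2\le\frac12\xi^4+18\eta^4$;
\item the scaling $t\eta^4=\lambda^4z^{4/3}t^{-1/3}$, $z\eta=\lambda z^{4/3}t^{-1/3}$ gives net decay $e^{-\frac{\lambda}{2}z^{4/3}t^{-1/3}}$.
\end{itemize}

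What your route buys is a self-contained, elementary argument that shows exactly where the $T$- and $\rho$-dependence of the constant comes from. The adverse term $\rho t\eta^2$ is absorbed into the quartic term at the price of $e^{\rho^2T/(4\epsilon)}$, with the constraint $(17+\epsilon)\lambda^3<\frac12$. The citation is shorter, but it leaves the reader to check that the cited result covers the lower-order term $\rho\partial_x^2$.

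Two simplifications are available:
\begin{itemize}
\item The absorption $\rho\eta^2\le\epsilon\eta^4+\rho^2/(4\epsilon)$ works for every $z>0$, so the case splits at $z\le 3$ and at $|z|\le t^{1/4}$ are unnecessary.
\item Your remark $m(x,y)\ge|x-y|$ shows the second exponential is redundant. This would also shorten the paper's later separate treatment of the $m(x,y)$-term.
\end{itemize}
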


\begin{proof}
From \eqref{eq:images-Neumann}, 
\begin{equation} \label{eq:meow-1}
H_t(x,y) = \sum_{m \in \ZZ} \left( g_t^\rho(x-y + 2m )+g_t^\rho (x+y + 2m)\right)
\end{equation}
where recall that for $t>0$ and $z\in\mathbb R$, $g_t^\rho$ was defined in \eqref{eq:gtrho}. Moreover, equation (1.2) in \cite{BARBATISDAVIES} for $m=2,N=1$ says that 
\begin{equation} \label{eq:meow19}
    |g_t^\rho(z)|
\le
C_{T,\rho} t^{-1/4}\exp\!\left(-c\frac{|z|^{4/3}}{t^{1/3}}\right),
\qquad z\in\mathbb R,\ \ 0<t\le T.
\end{equation}

We now obtain an upper bound for $\sum_{m\in\mathbb Z}\exp\!\left(-c\frac{|s+2m|^{4/3}}{t^{1/3}}\right)$.
Since the sum is $2$-periodic in $s$, it suffices to consider $s\in[-1,1]$.
Then $\operatorname{dist}(s,2\mathbb Z)=|s|$, and
\begin{align}
\sum_{m\in\mathbb Z}\exp\!\left(-c\frac{|s+2m|^{4/3}}{t^{1/3}}\right)
&=
\exp\!\left(-c\frac{|s|^{4/3}}{t^{1/3}}\right) \nonumber \\
&\quad+
\sum_{n=1}^\infty \exp\!\left(-c\frac{(2n-s)^{4/3}}{t^{1/3}}\right)
+
\sum_{n=1}^\infty \exp\!\left(-c\frac{(2n+s)^{4/3}}{t^{1/3}}\right) \nonumber \\
&\le C_T \exp\!\left(-c\frac{|s|^{4/3}}{t^{1/3}}\right)
\left(
1+\sum_{n=2}^\infty \exp\!\left(-c_2\frac{n^{4/3}}{t^{1/3}}\right)
\right)  \label{eq:meow1}\\
&\le
C_T \exp\!\left(-c\frac{|s|^{4/3}}{t^{1/3}}\right) \nonumber \\
&=  C_T \exp\!\left(-c\frac{\operatorname{dist}(s,2\mathbb Z)^{4/3}}{t^{1/3}}\right).  \label{eq:meow2}
\end{align}
For \eqref{eq:meow1}, we used that, for $s\in[-1,1]$ and $n\ge2$, $2n\pm s\ge n$ and so $(2n\pm s)^{4/3}\ge |s|^{4/3}+c_1 n^{4/3}$, where we can take e.g., $c_1:= \frac{3^{4/3} - 1}{2^{4/3}}$ and $c_2 := c c_1$. After decreasing $c_1>0$ if necessary, we have 
\begin{align*}
\exp\!\left(-c\frac{(2n\pm s)^{4/3}}{t^{1/3}}\right)
&\le
\exp\!\left(-c\frac{|s|^{4/3}}{t^{1/3}}\right)
\exp\!\left(-c c_1\frac{n^{4/3}}{t^{1/3}}\right).
\end{align*}

Finally, returning to $H_t$, \eqref{eq:meow-1}, \eqref{eq:meow19}, and \eqref{eq:meow2} say that
\begin{align}
|H_t(x,y)|
&\le
C_{T,\rho} t^{-1/4}
\left(
\exp\!\left(-c\frac{\operatorname{dist}(x-y,2\mathbb Z)^{4/3}}{t^{1/3}}\right)
+
\exp\!\left(-c\frac{\operatorname{dist}(x+y,2\mathbb Z)^{4/3}}{t^{1/3}}\right)
\right) \nonumber\\
&\le C_{T,\rho} t^{-1/4}
\left(
e^{-c|x-y|^{4/3}/t^{1/3}}
+
e^{-c\,m(x,y)^{4/3}/t^{1/3}}
\right). \label{eq:mew5}
\end{align}
where $m$ was defined in \eqref{def:m}.
The inequality in \eqref{eq:mew5} follows because $x,y\in[0,1]$ implies that $\operatorname{dist}(x-y,2\mathbb Z)=|x-y|$ and $\operatorname{dist}(x+y,2\mathbb Z)=\min(x+y,2-x-y)$. This finishes the proof.
\end{proof}

\begin{corollary} \label{4thgreenbounds}
There exists a constant $C_T>0$
such that, uniformly in $x$ and for all $0<t \le T$,
\begin{equation}
\|H_t(x,\cdot)\|_{L^\infty(0,1)}\le C_T\,t^{-1/4},
\qquad
\|H_t(x,\cdot)\|_{L^2(0,1)}\le C_T \,t^{-1/8}.
\end{equation}
\end{corollary}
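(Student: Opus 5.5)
Both bounds follow directly from the pointwise estimate of Lemma~\ref{pointwiseboundonH}. For the $L^\infty$ bound, each exponential factor $e^{-c|x-y|^{4/3}/t^{1/3}}$ and $e^{-c\,m(x,y)^{4/3}/t^{1/3}}$ is at most $1$. Hence, uniformly in $x,y\in[0,1]$ and $0<t\le T$,
\[
|H_t(x,y)|\le 2C_{T,\rho}\,t^{-1/4}.
\]
This gives $\|H_t(x,\cdot)\|_{L^\infty(0,1)}\le C_T t^{-1/4}$ after absorbing the dependence on the fixed $\rho$ into $C_T$.

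For the $L^2$ bound, I would square the pointwise estimate and use $(a+b)^2\le 2a^2+2b^2$, which gives
\[
\int_0^1 H_t(x,y)^2\,dy\le C\,t^{-1/2}\int_0^1\left(e^{-2c|x-y|^{4/3}/t^{1/3}}+e^{-2c\,m(x,y)^{4/3}/t^{1/3}}\right)dy .
\]
The first integral is bounded by the integral over $\RR$. Substituting $z=(y-x)/t^{1/4}$ yields
\[
t^{1/4}\int_{\RR}e^{-2c|z|^{4/3}}\,dz=C\,t^{1/4}.
\]
For the second term, split $[0,1]$ according to whether $m(x,y)=x+y$ or $m(x,y)=2-x-y$. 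On the first piece, $x+y\ge y$, so the integrand is at most $e^{-2c\,y^{4/3}/t^{1/3}}$. On the second piece, $2-x-y\ge 1-y$, so the integrand is at most $e^{-2c(1-y)^{4/3}/t^{1/3}}$. Each of these again integrates to at most $C t^{1/4}$ by the same scaling. Altogether,
\[
\int_0^1 H_t(x,y)^2\,dy\le C_T\,t^{-1/2}\,t^{1/4}=C_T\,t^{-1/4},
\]
and taking square roots gives $\|H_t(x,\cdot)\|_{L^2(0,1)}\le C_T t^{-1/8}$.

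The only point that needs care is the reflected term involving $m(x,y)$. Because $m(x,y)\ge\min(y,1-y)$, this term is dominated by Gaussian-type profiles centred at the boundary points $0$ and $1$, and the estimate stays uniform in $x$. There is no real obstacle beyond this scaling computation.
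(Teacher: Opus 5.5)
Your proof is correct. For the $L^\infty$ bound it coincides with the paper's: both simply drop the exponentials in Lemma~\ref{pointwiseboundonH}.

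For the $L^2$ bound you take a genuinely different route. You square the pointwise estimate and integrate, handling the reflected term through $m(x,y)\ge\min(y,1-y)$ and using the scaling $y-x=t^{1/4}z$. The paper remarks that this pointwise route is possible but instead applies Parseval's identity in $y$ to the cosine expansion \eqref{eq:H-def}:
\[
\|H_t(x,\cdot)\|_{L^2}^2 = 1+2\sum_{k\ge1}e^{-2(k^4\pi^4+\rho k^2\pi^2)t}\cos^2(k\pi x)\le 1+2\sum_{k\ge1}e^{-2\pi^4k^4t}\le 1+Ct^{-1/4},
\]
and then uses $1\le T^{1/4}t^{-1/4}$.

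The spectral argument is shorter and does not depend on the Barbatis--Davies kernel estimate behind Lemma~\ref{pointwiseboundonH}. It also gives a constant independent of $\rho\ge 0$, since the $\rho k^2\pi^2$ term is simply dropped. Your argument inherits the $\rho$-dependence of $C_{T,\rho}$, which is harmless here because $\rho$ is fixed. In exchange, yours is a purely local computation that also yields localized $L^2$ bounds, for instance over $\{|y-x|>r\}$. The paper later needs exactly such bounds when estimating $R(t)$ in Regime~\ref{case:fourthorder}.
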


\begin{proof}
The $\| \cdot \|_{L^\infty}$ bound follows immediately from the pointwise bounds given in Lemma \ref{pointwiseboundonH}.

For the $L^2$ bound, we one can also use the pointwise bound; here we use Parseval's inequality in the $y$-variable to obtain
\[
\|H_t(x,\cdot)\|_{L^2(0,1)}^2 \le 1+2\sum_{k=1}^\infty e^{-2\pi^4 k^4 t} \le 1 + 2\int_0^\infty e^{-2\pi^4 t u^4}\,du = 1 + C t^{-1/4} \le C_T t^{-1/4}.
\]

\end{proof}

\begin{lemma} \label{integralboundholderforH}
Let $\alpha > -1$, $r_0>0$, and $x^*\in[0,1]$. Then for $0<t\le T$,
\begin{equation} \label{eq:Ht-x*-y-bound}
    \int_{|y-x^*|\le r_0} |H_t(x^*,y)|\,|x^*-y|^{\alpha+1}\,dy
\le
C_{T,\rho,\alpha,r_0} \, t^{(\alpha+1)/4}.
\end{equation}
\end{lemma}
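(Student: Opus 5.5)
The plan is to insert the pointwise bound of Lemma~\ref{pointwiseboundonH} into the integral and reduce everything to a one-dimensional scaling computation. For $0<t\le T$ and $y\in[0,1]$,
\[
|H_t(x^*,y)|\le C_{T,\rho}\,t^{-1/4}\Bigl(e^{-c|x^*-y|^{4/3}/t^{1/3}}+e^{-c\,m(x^*,y)^{4/3}/t^{1/3}}\Bigr).
\]
This splits the left-hand side of \eqref{eq:Ht-x*-y-bound} into a ``direct'' term $I_1$ and a ``reflected'' term $I_2$. The integration region is really $\{|y-x^*|\le r_0\}\cap[0,1]$, since $H_t$ is only defined for $y\in[0,1]$.

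For $I_1$, set $z=y-x^*$ and enlarge the domain of integration to all of $\RR$. Then substitute $z=t^{1/4}w$. This works because $|z|^{4/3}/t^{1/3}=|w|^{4/3}$, so
\[
I_1\le C\,t^{-1/4}\int_{\RR}|z|^{\alpha+1}e^{-c|z|^{4/3}/t^{1/3}}\,dz
= C\,t^{-1/4}\,t^{(\alpha+2)/4}\int_{\RR}|w|^{\alpha+1}e^{-c|w|^{4/3}}\,dw .
\]
The last integral is finite precisely because $\alpha+1>-1$, which is where the hypothesis $\alpha>-1$ enters. This gives $I_1\le C\,t^{(\alpha+1)/4}$.

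For $I_2$, the key step is to compare $m(x^*,y)$ with $|x^*-y|$. For $x^*,y\in[0,1]$ we have $x^*+y\ge|x^*-y|$ and $2-x^*-y\ge|x^*-y|$, so $m(x^*,y)\ge|x^*-y|$. This is the same observation used in the proof of Lemma~\ref{lem:Neumann-Gaussian-|x-y|}. Hence $e^{-c\,m^{4/3}/t^{1/3}}\le e^{-c|x^*-y|^{4/3}/t^{1/3}}$, so $I_2$ is bounded by exactly the same integral as $I_1$ and obeys the same estimate.

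Adding the two bounds yields \eqref{eq:Ht-x*-y-bound}. The constant depends only on $T,\rho,\alpha$ and the constant $c$ from Lemma~\ref{pointwiseboundonH}. The dependence on $r_0$ is harmless because we dropped the restriction to $|y-x^*|\le r_0$. I do not expect a real obstacle: the only points to check are the comparison $m\ge|x^*-y|$, which removes the boundary-reflection term, and the integrability of $|w|^{\alpha+1}$ near $0$ when $\alpha+1<0$.
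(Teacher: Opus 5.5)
Your proof is correct. The direct term is handled exactly as in the paper: the pointwise bound of Lemma~\ref{pointwiseboundonH} followed by the substitution $z=t^{1/4}w$. Enlarging the domain to $\RR$ is harmless because the integrand is nonnegative.

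You depart from the paper in the reflected term, your $I_2$ (the paper's $A_2$).
\begin{itemize}
\item The paper does a three-case analysis according to whether $x^*\in[r_0,1-r_0]$, $x^*<r_0$, or $x^*>1-r_0$. In the interior case it uses $m(x^*,y)\ge r_0$ to get exponential smallness. In the boundary cases it identifies $m(x^*,y)=x^*+y$ (resp.\ $2-x^*-y$) and substitutes $u=x^*+y$.
\item Your one-line comparison $m(x^*,y)\ge|x^*-y|$ for $x^*,y\in[0,1]$ reduces $I_2$ directly to the $I_1$ integral. This is the same observation the paper itself uses in Lemma~\ref{lem:Neumann-Gaussian-|x-y|}.
\end{itemize}
Your route is shorter and also more robust. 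The paper's identification $m(x^*,y)=x^*+y$ for $x^*<r_0$ needs $x^*+y\le 1$ on the whole integration region. That tacitly requires $r_0$ to be small, e.g.\ $3r_0\le 1$, which does hold in the later application where $r_0<1/3$ is imposed. Your argument works for every $r_0>0$ and even gives a constant independent of $r_0$. The paper's case split buys only extra information that is not needed here: the reflected term is exponentially small when $x^*$ stays away from the boundary.

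One small inaccuracy in your last remarks. Under the hypothesis $\alpha>-1$ you have $\alpha+1>0$, so $|w|^{\alpha+1}$ has no singularity at $0$ and the case $\alpha+1<0$ never arises. Finiteness of $\int_{\RR}|w|^{\alpha+1}e^{-c|w|^{4/3}}\,dw$ only needs $\alpha>-2$, so the stated hypothesis is more than enough.
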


\begin{proof}
Using the pointwise estimate of Lemma \ref{pointwiseboundonH} (see also \eqref{def:m}),
\[
\int_{|y-x^*|\le r_0} |H_t(x^*,y)|\,|x^*-y|^{\alpha+1}\,dy
\le C_{T,\rho}(A_1 + A_2),
\]
where
\[
A_1 :=
\int_{|y-x^*|\le r_0}
t^{-1/4} e^{-c|x^*-y|^{4/3}/t^{1/3}}
|x^*-y|^{\alpha+1}\,dy,
\]
\[
A_2 :=
\int_{|y-x^*|\le r_0}
t^{-1/4} e^{-c\,m(x^*,y)^{4/3}/t^{1/3}}
|x^*-y|^{\alpha+1}\,dy.
\]

We first estimate $A_1$. We have that
\begin{align*}
    A_1
&=
\int_{|z|\le r_0}
t^{-1/4} e^{-c|z|^{4/3}/t^{1/3}} |z|^{\alpha+1}\,dz \\
&= t^{(\alpha+1)/4}
\int_{|u|\le r_0 t^{-1/4}}
e^{-c|u|^{4/3}} |u|^{\alpha+1}\,du \\
&\le
C_{\alpha, r_0} t^{(\alpha+1)/4},
\end{align*}
where the first line follows by setting $z = y - x^*$ and the second by setting $z = t^{1/4}u$. The integrability follows since $\alpha>-1$.

For $A_2$ we consider separately three cases. First, take $x^*\in[r_0,1-r_0]$.
Then $m(x^*,y)\ge \delta>0$ for $|y-x^*|\le r_0$, hence
\[
A_2 \le t^{-1/4} e^{-c\delta^{4/3}/t^{1/3}}
\int_{|y-x^*|\le r_0} |x^*-y|^{\alpha+1}\,dy
\le C_{T,\alpha, r_0} t^{(\alpha+1)/4}.
\]

Second, let $x^*<r_0$. Then $m(x^*,y)=x^*+y$ and $|x^*-y|\le x^*+y$, so
\begin{align}
    A_2
&\le
\int_{|y-x^*|\le r_0}
t^{-1/4} e^{-c(x^*+y)^{4/3}/t^{1/3}}
(x^*+y)^{\alpha+1}\,dy \nonumber \\
&\le
\int_0^{3r_0}
t^{-1/4} e^{-c u^{4/3}/t^{1/3}} u^{\alpha+1}\,du \nonumber \\
&\le  C_\alpha t^{(\alpha+1)/4}.
\end{align}
The second line follows by setting $u=x^*+y$, and the third by $v=t^{1/4}u$. The last case $x^*>1-r_0$ follows similarly as above, by taking $m(x^*,y)=2-x^*-y$ and $|x^*-y|\le 2-x^*-y$, so the same argument as above applies.

Combining the estimates above yields \eqref{eq:Ht-x*-y-bound}.
\end{proof}

The following lemma on the increments of $H$ is also needed. This is an easy modification of Lemma 1.8 from \cite{weber} where it is shown for the Green's function of the bi-Laplacian. 
\begin{lemma} \label{greenincrementsintegral}
For any $0<\gamma<\frac{3}{4}$, there exists a constant $C_{\gamma,\rho}$ such that 
$$
\begin{aligned}
& \int_{0}^{t} \int_{0}^{1} |H_{t-s}(x,y)-H_{\bar{t}-s}(\bar{x},y)|^{2} dy ds  \leq  C_{\gamma,\rho} \left(|t-\bar{t}|^{\gamma}+|x-\bar{x}|^{2}\right).
\end{aligned}
$$
\end{lemma}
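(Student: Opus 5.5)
The plan is to split the increment with the triangle inequality in $L^2(dy\,ds)$, exactly as in Corollary~\ref{cor:space-time-diff}. Assume without loss of generality $t\le\bar t$. We write
\[
H_{t-s}(x,y)-H_{\bar t-s}(\bar x,y)=\bigl(H_{t-s}(x,y)-H_{t-s}(\bar x,y)\bigr)+\bigl(H_{t-s}(\bar x,y)-H_{\bar t-s}(\bar x,y)\bigr).
\]
It then suffices to show
\[
\int_0^t\!\!\int_0^1 |H_{t-s}(x,y)-H_{t-s}(\bar x,y)|^2\,dy\,ds\le C|x-\bar x|^2
\quad\text{and}\quad
\int_0^t\!\!\int_0^1 |H_{t-s}(\bar x,y)-H_{\bar t-s}(\bar x,y)|^2\,dy\,ds\le C|t-\bar t|^\gamma .
\]
Both will be computed spectrally from \eqref{eq:H-def}. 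We use the orthogonality of $\{\cos(k\pi y)\}$ in $L^2(0,1)$ and set $\lambda_k:=k^4\pi^4+\rho k^2\pi^2\ge k^4\pi^4$. The constant mode $k=0$ cancels in both differences.

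For the spatial increment, orthogonality gives
\[
\int_0^1|H_{\tau}(x,y)-H_\tau(\bar x,y)|^2dy=2\sum_{k\ge1}e^{-2\lambda_k\tau}\bigl(\cos(k\pi x)-\cos(k\pi\bar x)\bigr)^2 .
\]
We bound $(\cos(k\pi x)-\cos(k\pi\bar x))^2\le k^2\pi^2|x-\bar x|^2$, as in \eqref{eq:cos-sin-1} with exponent $1$. Integrating in $\tau\in(0,t)$ gives $\int_0^t e^{-2\lambda_k\tau}d\tau\le (2\lambda_k)^{-1}\le Ck^{-4}$. The result is therefore bounded by $C|x-\bar x|^2\sum_k k^{-2}<\infty$. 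This is where the fourth-order dissipation buys the full exponent $2$, in contrast with the heat case.

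For the time increment, with $h=\bar t-t$ and $\tau=t-s$, we have
\[
\int_0^t\!\!\int_0^1|H_{\tau}(\bar x,y)-H_{\tau+h}(\bar x,y)|^2dy\,ds\le 2\sum_{k\ge1}(1-e^{-\lambda_k h})^2\int_0^t e^{-2\lambda_k\tau}d\tau\le C\sum_{k\ge1}\frac{(1-e^{-\lambda_k h})^2}{\lambda_k}.
\]
We use $(1-e^{-\lambda h})^2\le (1-e^{-\lambda h})\le \min(1,\lambda h)\le (\lambda h)^{\gamma}$ for $\gamma\in(0,1]$. This bounds the sum by $h^\gamma\sum_k\lambda_k^{\gamma-1}\le Ch^\gamma\sum_k k^{4\gamma-4}$, which converges precisely when $4-4\gamma>1$, i.e.\ $\gamma<3/4$. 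This explains the restriction in the statement. The constant depends on $\gamma$ and, through $\lambda_k\ge k^4\pi^4$, is actually uniform in $\rho\ge0$. I would also check the leftover piece $\int_t^{\bar t}\int_0^1 |H_{\bar t-s}(\bar x,y)|^2dy\,ds$, in case one wishes to integrate over $(0,\bar t)$ instead of $(0,t)$. It is not required by the statement as written, but Corollary~\ref{4thgreenbounds} gives it as $\le C\int_0^h \tau^{-1/4}d\tau\le Ch^{3/4}\le C_Th^\gamma$ for $h\le T$.

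I do not expect a genuine obstacle. The only point needing care is the convergence threshold $\gamma<3/4$ in the time-increment series, and the exchange of the sum and the integrals, which is justified by Tonelli since all terms are nonnegative. Combining the two bounds via $(a+b)^2\le 2a^2+2b^2$ yields the lemma.
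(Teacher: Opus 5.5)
Your proposal is correct and follows the paper's proof essentially verbatim. The shared steps are: the $L^2(dy\,ds)$ triangle inequality; Parseval in the cosine basis; the bound $|\cos(k\pi x)-\cos(k\pi\bar x)|\le C\min\{k|x-\bar x|,1\}$ together with $\sum_k k^2/\lambda_k<\infty$ for the spatial increment; and $(1-e^{-\lambda_k h})^2\le(\lambda_k h)^\gamma$ together with $\sum_k\lambda_k^{\gamma-1}<\infty$ iff $\gamma<3/4$ for the time increment. Your extra check that $\int_t^{\bar t}\int_0^1 H_{\bar t-s}(\bar x,y)^2\,dy\,ds\le C_T h^{3/4}$ is worth keeping: the paper's proof only treats $t\le\bar t$ over $(0,t)$, but this is exactly the term that appears when the lemma is applied in the proof of the second condition for $\kappa>0$, where $\bar t<t$ and $H_\tau=0$ for $\tau<0$.
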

\begin{proof}
 To prove these inequalities, we use the series decomposition of $H_{t}(x,y)$, in particular the orthonormality of $\sqrt{2}\cos(k \pi x)$ in $[0,1]$. Starting with the spatial increment, we have 
$$
\begin{aligned}
 \int_{0}^{t} \int_{0}^{1}&|H_{t-s}(x, y)-H_{t-s}(\bar{x}, y)|^{2} dy ds \\
&= \int_{0}^{t} \int_{0}^{1}\left|\sum_{k=1} ^{\infty} e^{-\left(k^4\pi^4+\rho k^2 \pi^2\right) (t-s)} \cos( k \pi y)\left[\cos(k \pi x) -\cos( k \pi \bar{x})\right]\right|^{2} d y ds \\
& =\int_{0}^{t} \sum_{k=1} ^{\infty} e^{-2\left(k^4\pi^4+\rho k^2 \pi^2\right) (t-s)} \left[\cos(k \pi x) -\cos( k \pi \bar{x})\right]^{2} ds\\
&=\sum_{k=1}^{\infty} \left|\cos( k \pi x)-\cos( k \pi \bar{x})\right|^{2} \int_{0}^{t} e^{- 2\left(k^4\pi^4+\rho k^2 \pi^2\right) (t-s)}  \mathrm{~d} s   \\
& =\sum_{k=1}^{\infty} \left|\cos( k \pi x)-\cos( k \pi \bar{x})\right|^{2} \frac{\left[1-e^{-2 \left(k^4\pi^4+\rho k^2 \pi^2\right) t}\right]}{\left(k^4\pi^4+\rho k^2 \pi^2\right)} .
\end{aligned}
$$
Note that, since $ k \neq 0$, we have that
$|\cos(k\pi x)-\cos(k \pi\bar{x})| \leq C \min \{k|x-\bar{x}|,1\}$
for a constant $C$ that doesn't depend on $k$.
Thus,
\begin{flalign}
\int_0^t \int_0^1 |H_{t-s}(x, y)-H_{t-s}(\bar{x}, y)|^{2} \mathrm{~d} y \mathrm{~d} s 
&\leq C |x-\bar{x}|^{2} \sum_{k=1}^{\infty}  \left[1-e^{-2 \left(k^4\pi^4+\rho k^2 \pi^2\right) t}\right]\frac{k^2}{\left(k^4\pi^4+\rho k^2 \pi^2\right)}
\nonumber\\ &\leq  C |x-\bar{x}|^{2} \sum_{k=1}^{\infty}  \frac{1}{\left(k^2\pi^4+\rho\pi^2\right)} \leq C_\rho |x-\bar{x}|^{2}, \label{eq:H-space-L2}
\end{flalign}
where the last inequality follows from the fact that the series $\sum_{k=1}^{\infty}  \frac{1}{\left(k^2\pi^4+\rho\pi^2\right)}$ converges. For the time increment, we have
\begin{flalign*}
& \int_0^t \int_0^1|H_{t-s}(x, y)-H_{\bar{t}-s}(x, y)|^{2} dy ds = \\
&=2\int_0^t \int_0^1 \left|\sum_{k=1} ^{\infty}  e^{-\left(k^4\pi^4+\rho k^2 \pi^2\right)(t-s)} \left( 1- e^{-\left(k^4\pi^4+\rho k^2 \pi^2\right)(\bar{t}-t)}\right) \cos( k \pi y) \cos( k \pi x)\right|^{2} dy ds \\
&=2\int_{0}^{t} \sum_{k=1} ^{\infty}  e^{-2\left(k^4\pi^4+\rho k^2 \pi^2\right)(t-s)} \left( 1- e^{-\left(k^4\pi^4+\rho k^2 \pi^2\right)(\bar{t}-t)}\right)^{2}  \cos^{2}( k \pi x) ds \\
&= 2\sum_{k=1} ^{\infty}  \left( 1- e^{-\left(k^4\pi^4+\rho k^2 \pi^2\right)(\bar{t}-t)}\right)^{2}  \cos^{2}( k \pi x) \int_{0}^{t}  e^{-2\left(k^4\pi^4+\rho k^2 \pi^2\right)(t-s)} ds \\
&=2 \sum_{k=1} ^{\infty}  \left( 1- e^{-\left(k^4\pi^4+\rho k^2 \pi^2\right)(\bar{t}-t)}\right)^{2}  \cos^{2}( k \pi x) \frac{\left( 1-  e^{-2\left(k^4\pi^4+\rho k^2 \pi^2\right)t} \right)}{2\left(k^4\pi^4+\rho k^2 \pi^2\right)}.
\end{flalign*}
From $e^x \geq x+1$, we see that, since $\bar t \ge t$,
$$
\left( 1- e^{-\left(k^4\pi^4+\rho k^2 \pi^2\right)(\bar{t}-t)} \right)^{2} \leq \min \{1, \left(k^4\pi^4+\rho k^2 \pi^2\right)|\bar{t}-t|\}\leq |\bar{t}-t|^{\gamma} \left(k^4\pi^4+\rho k^2 \pi^2\right)^{\gamma}.
$$
Therefore,
\begin{flalign}
\int_0^t \int_0^1|H_{t-s}(x, y)&-H_{\bar{t}-s}(x, y)|^2 dy ds \nonumber\\
 &\leq 2\sum_{k=1} ^{\infty}  \left( 1- e^{-\left(k^4\pi^4+\rho k^2 \pi^2\right)(\bar{t}-t)}\right)^{2}  \cos^{2}( k \pi x) \frac{\left( 1-  e^{-2\left(k^4\pi^4+\rho k^2 \pi^2\right)t} \right)}{2\left(k^4\pi^4+\rho k^2 \pi^2\right)} \nonumber\\
&\leq C_\gamma |\bar{t}-t|^{\gamma}\sum_{k=1} ^{\infty} \frac{1}{\left(k^4\pi^4+\rho k^2 \pi^2\right)^{1-\gamma}}.
\end{flalign}
The series converges as long as $4(1-\gamma)>1$, or equivalently as long as $\gamma < \frac{3}{4}$. We can thus conclude that for any $0<\gamma < \frac{3}{4},$
\begin{equation} \label{eq:H-time-L2}
    \int_0^t \int_0^1 |H_{t-s}(x, y)-H_{\bar{t}-s}(x, y)|^{2} \mathrm{~d} y \mathrm{~d} s \leq C_{\gamma,\rho} |\bar{t}-t|^{\gamma} .
\end{equation}
We can now combine \eqref{eq:H-space-L2}--\eqref{eq:H-time-L2} and the triangle inequality for the $L^{2}(dy ds)$ norm to get 
\begin{equation*}
\|H_{t-s}(x,y)-H_{\bar t-s}(\bar x,y)\|_{L^2(d y d s )}  \leq \|H_{t-s}(x,r)-H_{t-s}(\bar{x},y)\|_{L^2(d y ds)} +\|H_{ t-s}(\bar{x},y)-H_{\bar{t}-s}(\bar{x},y) \|_{L^2(dy ds )} ,
\end{equation*}
and thus squaring this inequality, we get 
$$
\begin{aligned}
& \int_{0}^{t} \int_{0}^{1}|H_{t-s}(x, y)-H_{\bar{t}-s}(\bar{x}, y)|^{2} \mathrm{~d} y \mathrm{~d} s =  \|\big(H_{t-s}(x,y)-H_{\bar t-s}(\bar x,y)\big)\|^{2}_{L^2(d y d s )}  \\
&\leq C_{\gamma,\rho} \left(\|H_{t-s}(x,r)-H_{t-s}(\bar{x},y) \|^{2}_{L^2(d y ds)} +\|H_{ t-s}(\bar{x},y)-H_{\bar{t}-s}(\bar{x},y) \|^{2}_{L^2(dy ds )}\right) \\
&\leq C_{\gamma,\rho} \left( |\bar{x}-x|^{2}+|\bar{t}-t|^{\gamma} \right) .
\end{aligned}
$$
This completes the proof of Lemma \ref{greenincrementsintegral}. 
\end{proof}

The following lemma provides analogous estimates for weighted increments of the Green function.

\begin{lemma}\label{lemmagdif}
Let $0\le s<t,\bar t\le T$, $x, \bar x \in [0,1]$, and set $t_0:=t\wedge \bar t$.
There  exist constants $C_\rho,C_{\rho,\alpha}$ so that the following bounds hold for any $0<\alpha< \frac{3}{4}$
\begin{align}
    & \int_s^t \int_0^1
\big(H_{t-\theta}(x,r)-H_{t-\theta}(\bar x,r)\big)^2
\frac{1}{(\theta-s)^{\frac{1}{2}}}\,dr\,d\theta \leq C_\rho \frac{|x-\bar{x}|^{2}}{\sqrt{t-s}}, \label{desiredboundspatial} \\
&\int_s^{t_0}\int_0^1
\big(H_{t-\theta}(x,r)-H_{\bar t-\theta}(x,r)\big)^2
\frac{1}{(\theta-s)^{\frac{1}{2}}} dr d\theta \leq  C_{\rho,\alpha} \frac{ |t-\bar{t}|^{\alpha}}{\sqrt{t_0-s}}.\label{desiredtimebound}
\end{align}
\end{lemma}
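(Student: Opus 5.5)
The plan is to reduce both bounds to spectral computations in $r$, as in the proof of Lemma~\ref{greenincrementsintegral}, and then handle the weight $(\theta-s)^{-1/2}$ with a Beta-function argument in $\theta$. Write $\lambda_k:=k^4\pi^4+\rho k^2\pi^2$. By \eqref{eq:H-def}, the constant mode cancels in both increments. Orthogonality of $\{\cos(k\pi r)\}$ in $L^2(0,1)$ then gives, for $u:=t-\theta$,
\begin{equation*}
\int_0^1\big(H_u(x,r)-H_u(\bar x,r)\big)^2dr=2\sum_{k\ge1}e^{-2\lambda_k u}\big(\cos(k\pi x)-\cos(k\pi\bar x)\big)^2\le C|x-\bar x|^2\sum_{k\ge1}k^2e^{-2\lambda_k u}.
\end{equation*}
Here we use $|\cos a-\cos b|\le|a-b|$. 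For the time increment, put $u=t-\theta$ and $\bar u=\bar t-\theta$, and assume without loss of generality $t\le\bar t$, so $t_0=t$. We use $\big(1-e^{-\lambda_k|t-\bar t|}\big)^2\le \lambda_k^{\alpha}|t-\bar t|^{\alpha}$, as in Lemma~\ref{greenincrementsintegral}, to obtain
\begin{equation*}
\int_0^1\big(H_{u}(x,r)-H_{\bar u}(x,r)\big)^2dr\le 2|t-\bar t|^{\alpha}\sum_{k\ge1}\lambda_k^{\alpha}e^{-2\lambda_k (t_0-\theta)}.
\end{equation*}

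Next we integrate in $\theta$ against $(\theta-s)^{-1/2}$, term by term. Set $\tau:=t-s$ (respectively $\tau:=t_0-s$). The key elementary estimate is
\begin{equation*}
J_k(\tau):=\int_0^{\tau}e^{-2\lambda_k v}(\tau-v)^{-1/2}\,dv\le C\min\Big\{\tau^{1/2},\ \lambda_k^{-1}\tau^{-1/2}+\lambda_k^{-1/2}\Big\}.
\end{equation*}
To see this, split the integral at $v=\tau/2$. On $[0,\tau/2]$ we have $(\tau-v)^{-1/2}\le\sqrt2\,\tau^{-1/2}$, and $\int_0^\infty e^{-2\lambda_k v}dv=(2\lambda_k)^{-1}$. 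On $[\tau/2,\tau]$ we have $e^{-2\lambda_k v}\le e^{-\lambda_k\tau}$ and $\int_{\tau/2}^{\tau}(\tau-v)^{-1/2}dv\le C\tau^{1/2}$, and $\tau^{1/2}e^{-\lambda_k\tau}\le C\lambda_k^{-1}\tau^{-1/2}$. So in fact $J_k(\tau)\le C\lambda_k^{-1}\tau^{-1/2}$ for all $k$.

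With this, the spatial bound \eqref{desiredboundspatial} follows from
\begin{equation*}
|x-\bar x|^2\tau^{-1/2}\sum_{k\ge1}\frac{k^2}{\lambda_k}\le |x-\bar x|^2\tau^{-1/2}\sum_{k\ge1}\frac{C}{k^2}.
\end{equation*}
The constant is uniform in $\rho\ge0$, since $\lambda_k\ge k^4\pi^4$. The time bound \eqref{desiredtimebound} follows similarly from
\begin{equation*}
|t-\bar t|^{\alpha}\tau^{-1/2}\sum_{k\ge1}\lambda_k^{\alpha-1}.
\end{equation*}
This series converges exactly when $4(1-\alpha)>1$, i.e.\ $\alpha<3/4$, which matches the hypothesis. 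Interchanging sum and integral is justified by Tonelli, since all terms are nonnegative after the bounds are applied.

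The main point requiring care is the estimate $J_k(\tau)\lesssim\lambda_k^{-1}\tau^{-1/2}$. A naive bound such as $\int_0^\tau e^{-2\lambda_k v}dv\cdot\sup(\tau-v)^{-1/2}$ fails, because the weight is singular at $v=\tau$, i.e.\ at $\theta=s$. The splitting above handles this: near $v=\tau$ the exponential factor $e^{-\lambda_k\tau}$ absorbs $\tau$ into $\lambda_k^{-1}$, and away from it the weight is bounded by $\tau^{-1/2}$. If that is too lossy anywhere, the fallback is the exact expression $J_k(\tau)=e^{-2\lambda_k\tau}\int_0^\tau e^{2\lambda_k w}w^{-1/2}dw$, which is a Dawson-type function bounded by $C\min(\tau^{1/2},\lambda_k^{-1}\tau^{-1/2})$.
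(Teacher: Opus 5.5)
Your proof is correct and is essentially the paper's argument. You use Parseval in $r$ with the constant mode cancelling, the bounds $|\cos(k\pi x)-\cos(k\pi\bar x)|\le k\pi|x-\bar x|$ and $(1-e^{-\lambda_k|t-\bar t|})^2\le\lambda_k^{\alpha}|t-\bar t|^{\alpha}$, the estimate $\int_0^{\tau}e^{-2\lambda_k v}(\tau-v)^{-1/2}\,dv\le C\lambda_k^{-1}\tau^{-1/2}$, and then sum $\sum_k k^2/\lambda_k$ and $\sum_k\lambda_k^{\alpha-1}$. The only difference is how you get that one-dimensional estimate: you split at $v=\tau/2$, while the paper computes the integral exactly via Dawson's integral and uses $e^{-z^2}\int_0^z e^{w^2}\,dw\le 1/z$, which is your stated fallback and gives the same bound; your remark that the constants can be taken uniform in $\rho\ge 0$ is also correct.
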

\begin{proof}[Proof of Lemma \ref{lemmagdif}]
We start by proving the spatial increment bound in \eqref{desiredboundspatial}. We have 
\begin{equation}\label{eq:grdifer}
H_{t-\theta}(x,r)-H_{t-\theta}(\bar x,r)=2\sum_{k \in \mathbb{N}} e^{-(k^{4}\pi^4 +\rho k^2 \pi^2)(t-\theta)}(\cos(k \pi x)-\cos(k \pi \bar{x}))\cos(k\pi r).
\end{equation}
Using \eqref{eq:grdifer} and Parseval's identity, it follows that
\begin{flalign}
J(t,x,\bar x, s, y) & :=  \int_s^t \int_0^1
\big(H_{t-\theta}(x,r)-H_{t-\theta}(\bar x,r)\big)^2
\frac{1}{(\theta-s)^{\frac{1}{2}}}\,dr\,d\theta 
\nonumber\\
&  =   \int_s^t \frac{1}{(\theta-s)^{\frac12}}\int_0^1
\big(H_{t-\theta}(x,r)-H_{t-\theta}(\bar x,r)\big)^2
  dr\,d\theta
\nonumber\\
& \leq 4\int_s^{t} \frac{1}{(\theta-s)^{\frac{1}{2}}}\int_0^1
\left(\sum_{k \in \mathbb{N}} e^{-(k^{4}\pi^4 +\rho k^2 \pi^2)(t-\theta)}(\cos(k \pi x)-\cos(k \pi \bar{x}))\cos(k r) \right)^2 dr\,d\theta 
\nonumber\\
&=  4\int_s^{t} \frac{1}{(\theta-s)^{\frac{1}{2}}}
\sum_{k \in \mathbb{N}} e^{-2(k^{4}\pi^4 +\rho k^2 \pi^2) (t-\theta)}(\cos(k\pi x)-\cos(k \pi \bar{x}))^2 d\theta
\nonumber\\
&= 4\sum_{k \in \mathbb{N}} \big(\cos(k\pi x)-\cos(k \pi \bar{x})\big)^2 \int_s^t \frac{1}{(\theta-s)^{\frac{1}{2}}}
\, e^{-2(k^{4}\pi^4 +\rho k^2 \pi^2) (t-\theta)} \,d\theta.\label{eq:Jest}
\end{flalign}
We focus now on computing the integral term. We write, with explanations given below,
\begin{align}
    \int_s^t &\frac{1}{(\theta-s)^{\frac{1}{2}}}
e^{-2(k^{4}\pi^4 +\rho k^2 \pi^2) (t-\theta)} d\theta  = e^{-2(k^{4}\pi^4 +\rho k^2 \pi^2)(t-s)}\int_0^{t-s} \frac{1}{u^{\frac{1}{2}}}
e^{2(k^{4}\pi^4 +\rho k^2 \pi^2)u}\,du \label{eq:3455} \\
&= e^{-2(k^{4}\pi^4 +\rho k^2 \pi^2)(t-s)}\int_0^{\sqrt{2(k^{4}\pi^4 +\rho k^2 \pi^2)(t-s)}} \frac{\sqrt{2(k^{4}\pi^4 +\rho k^2 \pi^2)}}{z}
e^{z^2} \frac{z}{(k^{4}\pi^4 +\rho k^2 \pi^2)} dz \label{eq:3456}
\\
&=  e^{-2(k^{4}\pi^4 +\rho k^2 \pi^2)(t-s)}\frac{\sqrt{2}}{\sqrt{(k^{4}\pi^4 +\rho k^2 \pi^2)}}\int_{0}^{\sqrt{2(k^{4}\pi^4 +\rho k^2 \pi^2)(t-s)}}
e^{z^2} dz \nonumber\\
&= \frac{\sqrt{2}}{\sqrt{2(k^{4}\pi^4 +\rho k^2 \pi^2)}} W\big(\sqrt{2(k^{4}\pi^4 +\rho k^2 \pi^2)} \sqrt{2(t-s)}\big), \label{eq:3458}
\end{align}
where we have defined Dawson's integral 
$$
W(x):=e^{-x^2}\int_0^x e^{z^2} dz. 
$$
The equality in \eqref{eq:3455} follows by the change of variables $u=\theta-s$; \eqref{eq:3456} follows by the change of variables $z=\sqrt{2(k^{4}\pi^4 +\rho k^2 \pi^2)u}$.
By \eqref{eq:Jest} and \eqref{eq:3458}, we obtain
\begin{equation*}
J(t,x,\bar x, s)\leq 
\sqrt{2}\sum_{k \in \mathbb{N}} \frac{1}{\sqrt{2(k^{4}\pi^4 +\rho k^2 \pi^2)}} W(\sqrt{2(k^{4}\pi^4 +\rho k^2 \pi^2)} \sqrt{2(t-s)})  \big(\cos(kx)-\cos(k\bar{x})\big)^2.
\end{equation*}
We estimate Dawson's integral as follows.
For \(x>0\),
\[
W(x)=e^{-x^{2}}\!\int_{0}^{x} e^{z^{2}}\,dz
=\int_{0}^{x} e^{-(x^{2}-z^{2})}\,dz
=\int_{0}^{x} e^{-(x-z)(x+z)}\,dz 
\le \int_{0}^{x} e^{-x(x-z)}\,dz,
\]
where we used that, since \(0\le z\le x\) implies \(x+z\ge x\), we have that $e^{-(x-z)(x+z)} \le e^{-x(x-z)}$.
With the change of variables \(u=x-z\),
\begin{equation} \label{eq:W-bound}
    W(x) \le \int_{0}^{x} e^{-x(x-z)}\,dz
=\int_{x}^{0} e^{-xu}(-du)
=\int_{0}^{x} e^{-xu}\,du
=\frac{1-e^{-x^{2}}}{x}
<\frac{1}{x}.
\end{equation}
Recalling the estimate $|\cos(k \pi x)-\cos(k \pi \bar{x})| \leq C \min \{k|x-\bar{x}|,1\}$ from the proof of Lemma \ref{greenincrementsintegral}, it holds that
\begin{align*}
J(t,x,\bar x, s)
&\leq C
\sum_{k \in \mathbb{N}} \frac{1}{ 2(k^{4}\pi^4 +\rho k^2 \pi^2) \sqrt{t-s}} (\cos(k \pi x)-\cos(k \pi \bar{x}))^{2} 
\\
&\leq  C \sum_{k \in \mathbb{N}} \frac{1}{ 2(k^{4}\pi^4 +\rho k^2 \pi^2) \sqrt{t-s}} k^2 |x-\bar{x}|^2 \\
&=C  \frac{|x-\bar{x}|^{2}}{\sqrt{t-s}}  \sum_{k \in \mathbb{N}} \frac{1}{2(k^{2}\pi^4 +\rho \pi^2) } \leq  C_\rho \frac{|x-\bar{x}|^{2}}{\sqrt{t-s}} ,
\end{align*}
which finishes the proof of \eqref{desiredboundspatial}.

Now, we prove the time increment bound in \eqref{desiredtimebound}. Assume without loss of generality that $t_0 = t$. We have 
\begin{align}
H_{t-\theta}(x,r)-H_{\bar{t}-\theta}(x,r) &=
\sum_{ k \in \mathbb{N}} 2 \cos(k \pi x) \cos(k \pi r) \left( e^{-(k^{4}\pi^4 +\rho k^2 \pi^2)(t-\theta)} - e^{-(k^{4}\pi^4 +\rho k^2 \pi^2)(\bar{t}-\theta)} \right) \nonumber
\\
&= \sum_{ k \in \mathbb{N}} 2\cos(k \pi x) \cos(k \pi r) e^{-(k^{4}\pi^4 +\rho k^2 \pi^2)(t-\theta)} \left( 1- e^{-(k^{4}\pi^4 +\rho k^2 \pi^2)(\bar{t}-t)} \right), \nonumber
\end{align}
so we get 
\begin{align}
\int_0^1 &\big(H_{t-\theta}(x,r)-H_{\bar{t}-\theta}(x,r)\big)^2 dr \nonumber
\\
&= \int_0^1 4 \left( \sum_{ k \in \mathbb{N}} \cos(k \pi x) \cos(k \pi r) e^{-(k^{4}\pi^4 +\rho k^2 \pi^2)(t-\theta)} \left( 1- e^{-(k^{4}\pi^4 +\rho k^2 \pi^2)(\bar{t}-t)} \right) \right)^{2} dr \nonumber
\\
&=4\sum_{ k \in \mathbb{N}} \cos^{2}(k \pi x)e^{-2(k^{4}\pi^4 +\rho k^2 \pi^2)(t-\theta)} \left( 1- e^{-(k^{4}\pi^4 +\rho k^2 \pi^2)(\bar{t}-t)} \right)^2, \nonumber
\end{align}
where we used Parseval's identity in the last step. Moreover,
\begin{equation}
\begin{split}
&\int_s^{t} \frac{1}{(\theta-s)^{\frac{1}{2}}} \sum_{ k \in \mathbb{N}} \cos^{2}(k \pi x)e^{-2(k^{4}\pi^4 +\rho k^2 \pi^2)(t-\theta)} \left( 1- e^{-(k^{4}\pi^4 +\rho k^2 \pi^2)(\bar{t}-t)} \right)^{2}\,d\theta 
\\
&= \sum_{ k \in \mathbb{N}} \cos^{2}(k \pi x) \left( 1- e^{-(k^{4}\pi^4 +\rho k^2 \pi^2)(\bar{t}-t)} \right)^{2}\int_s^{t} \frac{1}{(\theta-s)^{\frac{1}{2}}}e^{-2(k^{4}\pi^4 +\rho k^2 \pi^2)(t-\theta)} \,d\theta \\
&=\sum_{ k \in \mathbb{N}} \cos^{2}(k \pi x) \left( 1- e^{-(k^{4}\pi^4 +\rho k^2 \pi^2)(\bar{t}-t)} \right)^{2}  \frac{1}{\sqrt{(k^{4}\pi^4 +\rho k^2 \pi^2)}} W(\sqrt{(k^{4}\pi^4 +\rho k^2 \pi^2)} \sqrt{2(t-s)}) \\
&\le \sum_{ k \in \mathbb{N}} \cos^{2}(k \pi x) \left( 1- e^{-(k^{4}\pi^4 +\rho k^2 \pi^2)(\bar{t}-t)} \right)^{2}\frac{1}{(k^{4}\pi^4 +\rho k^2 \pi^2)\sqrt{t-s}}. \label{eq:8765}
\end{split}
\end{equation}
For the third line we have used the estimate for Dawson's integral given in \eqref{eq:W-bound}.
Since all the terms are smaller than $1, $ for any $\alpha \leq 1$
$$
\left( 1- e^{-(k^{4}\pi^4 +\rho k^2 \pi^2)(\bar{t}-t)} \right)^{2} \leq |\bar{t}-t|^{\alpha} (k^{4}\pi^4 +\rho k^2 \pi^2)^{\alpha}.$$
Using \eqref{eq:8765}, we get the bound 
\begin{multline}
\int_s^{t}\int_0^1
\big(H_{t-\theta}(x,r)-H_{\bar t-\theta}(x,r)\big)^2
\frac{1}{(\theta-s)^{\frac{1}{2}}} dr d\theta \\
\leq  C |t-\bar{t}|^{\alpha} \frac{1}{\sqrt{t-s}} \sum_{ k \in \mathbb{N}} \frac{1}{(k^{4}\pi^4 +\rho k^2 \pi^2)^{1-\alpha}} \leq  C_{\rho,\alpha} \frac{ |t-\bar{t}|^{\alpha}}{\sqrt{t-s}}
\end{multline}
where we used that $0<\alpha<\frac{3}{4}$, so the series converges.
This completes the proof of Lemma \ref{lemmagdif}.
\end{proof}




The bounds from Lemma \ref{lemmagdif} can be combined to obtain, for $s  \le t \le \bar t \le T$, $x , \bar x \in [0,1]$.
\begin{eqnarray}
\int_s^{t}\int_0^1
\big(H_{t-\theta}(x,r)-H_{\bar t-\theta}(\bar{x},r)\big)^2
\frac{1}{(\theta-s)^{\frac{1}{2}}} dr d\theta \leq C_{\rho,\alpha} \frac{ \left(|t-\bar{t}|^{\alpha}+|x-\bar{x}|^2 \right)}{\sqrt{t-s}}.\label{combinedspatialtimeboundfor4th}
\end{eqnarray}

\subsubsection{Estimates for the Malliavin derivative} \label{subsubsec:Malliavin-H}

We first prove the following Gr\"onwall-type lemma. 

\begin{lemma}\label{lemmaA}
Fix $s\in[0,T)$ and suppose $A:[s,T]\to[0,\infty)$ satisfies, for all $t\in(s,T]$,
\begin{equation}\label{eq:base_volterra}
A(t)
\;\le\;
C_0\,(t-s)^{-1/2}
\;+\;
c\!\int_s^t \frac{A(\theta)}{(t-\theta)^{1/4}}\,d\theta.
\end{equation}
Then, we have $ A(t)\le C_T\,(t-s)^{-1/2}, $ for all $t\in(s,T]$.
\end{lemma}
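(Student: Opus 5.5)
The natural route is to iterate the Volterra inequality \eqref{eq:base_volterra} and sum the resulting series, exactly as in the construction of the functions $h_n$ in \eqref{eq:hn}. We may assume $A$ is locally integrable on $(s,T]$ (otherwise the integral in \eqref{eq:base_volterra} is infinite and the hypothesis is vacuous). Define the kernel operator
\[
(\mathcal K f)(t):=c\int_s^t \frac{f(\theta)}{(t-\theta)^{1/4}}\,d\theta ,
\]
which is positive, so \eqref{eq:base_volterra} reads $A\le \phi+\mathcal K A$ with $\phi(t)=C_0(t-s)^{-1/2}$. Iterating $N$ times gives
\[
A\le \sum_{n=0}^{N-1}\mathcal K^n\phi+\mathcal K^N A .
\]
The plan is to compute $\mathcal K^n\phi$ explicitly, show that the series converges to something of order $(t-s)^{-1/2}$, and show that the remainder $\mathcal K^N A\to 0$.

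For the explicit computation, I will use the Beta identity already used in the paper:
\[
\int_s^t (\theta-s)^{a-1}(t-\theta)^{-1/4}\,d\theta=(t-s)^{a-1/4}\,\frac{\Gamma(a)\Gamma(3/4)}{\Gamma(a+3/4)} .
\]
Starting from $a=1/2$, induction gives
\[
\mathcal K^n\phi(t)=C_0\bigl(c\,\Gamma(3/4)\bigr)^n\frac{\Gamma(1/2)}{\Gamma(1/2+3n/4)}\,(t-s)^{-1/2+3n/4}.
\]
Since $(t-s)^{3n/4}\le T^{3n/4}$, we get
\[
\sum_{n\ge0}\mathcal K^n\phi(t)\le C_0\,\Gamma(1/2)\,(t-s)^{-1/2}\sum_{n\ge0}\frac{\bigl(c\,\Gamma(3/4)T^{3/4}\bigr)^n}{\Gamma(1/2+3n/4)} .
\]
The last sum is a Mittag-Leffler type series, and it converges because $\Gamma(1/2+3n/4)$ grows superexponentially. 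This gives the constant $C_T$.

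The main obstacle is the remainder $\mathcal K^N A$, since a priori we know nothing about $A$ beyond the inequality. I would first write $\mathcal K^N$ as a single convolution kernel, namely
\[
\frac{\bigl(c\,\Gamma(3/4)\bigr)^N}{\Gamma(3N/4)}\,(t-\theta)^{3N/4-1}.
\]
For $N\ge 2$ this kernel is bounded by $\bigl(c\,\Gamma(3/4)\bigr)^N T^{3N/4-1}/\Gamma(3N/4)$. Hence
\[
\mathcal K^N A(t)\le \frac{\bigl(c\,\Gamma(3/4)T^{3/4}\bigr)^N}{T\,\Gamma(3N/4)}\int_s^t A(\theta)\,d\theta ,
\]
which tends to $0$ as $N\to\infty$ provided $\int_s^t A<\infty$. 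If the applications only guarantee local integrability on $(s,T]$ (for instance, $A(\theta)=\|D_{s,\cdot}u(\theta,\cdot)\|^2$-type quantities that blow up like $(\theta-s)^{-1/2}$), I would apply the argument on $[s+\varepsilon,T]$. There the term coming from $[s,s+\varepsilon]$ is absorbed into a modified forcing $\phi_\varepsilon$ that still satisfies $\phi_\varepsilon\lesssim (t-s)^{-1/2}$. Then I would let $\varepsilon\downarrow0$. Alternatively, I would simply state integrability of $A$ as part of the standing hypothesis, since $\int_s^t A(\theta)(t-\theta)^{-1/4}\,d\theta<\infty$ already implies $A\in L^1(s,t)$ for each $t$. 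With the remainder vanishing, letting $N\to\infty$ yields $A(t)\le C_T(t-s)^{-1/2}$, where $C_T$ depends only on $C_0$, $c$ and $T$.
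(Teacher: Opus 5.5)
Your proof is correct, but it closes the iteration differently from the paper. The paper iterates only once. It multiplies \eqref{eq:base_volterra} at time $\tilde s$ by $(t-\tilde s)^{-1/4}$, integrates, and uses $\operatorname{B}(\tfrac12,\tfrac34)$ and $\operatorname{B}(\tfrac34,\tfrac34)$ to reach $A(t)\le C_T(t-s)^{-1/2}+C_T\int_s^t A(\theta)\,d\theta$. This is exactly your observation that the kernel of $\mathcal K^2$, proportional to $(t-\theta)^{1/2}$, is bounded. The paper then appeals to the classical Gr\"onwall inequality with the integrable forcing $C_T(t-s)^{-1/2}$.

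You instead sum the whole Neumann series in closed form and kill the remainder $\mathcal K^N A$ via the explicit $N$-fold kernel $(c\Gamma(3/4))^N(t-\theta)^{3N/4-1}/\Gamma(3N/4)$. The paper's route is shorter because it outsources the last step to Gr\"onwall. Yours is self-contained and gives an explicit constant, $C_T=C_0\Gamma(\tfrac12)\sum_{n\ge0}(c\Gamma(\tfrac34)T^{3/4})^n/\Gamma(\tfrac12+\tfrac{3n}{4})$. It also shows exactly where integrability of $A$ is used.

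Your preliminary remark on integrability needs correcting: integrability cannot be assumed without loss of generality. If the integral in \eqref{eq:base_volterra} is infinite, the hypothesis holds vacuously and the conclusion fails; take $A(t)=(t-s)^{-1}$. So $\int_s^T A(\theta)\,d\theta<\infty$ must be an added hypothesis.

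For the same reason your $\varepsilon$-truncation cannot help. A non-integrable singularity at $s$ contributes an infinite term that cannot be absorbed into a forcing of size $(t-s)^{-1/2}$. A blow-up like $(\theta-s)^{-1/2}$ is already integrable, so no truncation is needed there. Your fallback is the right fix: make finiteness of the integral in \eqref{eq:base_volterra}, hence $A\in L^1(s,t)$, a standing assumption. The paper's proof tacitly needs the same assumption for its Gr\"onwall step.
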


\begin{proof}
Define
\[
Q(t):=\int_s^t \frac{A(\tilde{s})}{(t-\tilde{s})^{1/4}}\,d\tilde{s}.
\]
From \eqref{eq:base_volterra}, for every $\tilde{s}\in(s,t]$ we have
\begin{equation}\label{eq:Atildes}
A(\tilde{s})
\;\le\;
C_0\,(\tilde{s}-s)^{-1/2}
\;+\;
c\!\int_s^{\tilde{s}}\frac{A(\theta)}{(\tilde{s}-\theta)^{1/4}}\,d\theta.
\end{equation}
Multiply \eqref{eq:Atildes} by $(t-\tilde{s})^{-1/4}$ and integrate $\tilde{s}\in[s,t]$:
\begin{align}
Q(t)
&=\int_s^t \frac{A(\tilde{s})}{(t-\tilde{s})^{1/4}}\,d\tilde{s} \nonumber\\
&\le
C_0\int_s^t \frac{(\tilde{s}-s)^{-1/2}}{(t-\tilde{s})^{1/4}}\,d\tilde{s}
\;+\;
c\int_s^t \frac{1}{(t-\tilde{s})^{1/4}}
\Big(\int_s^{\tilde{s}}\frac{A(\theta)}{(\tilde{s}-\theta)^{1/4}}\,d\theta\Big)d\tilde{s}.
\label{eq:Hineq}
\end{align}
A change of variable $u=\tilde{s}-s$ gives
\begin{equation} \label{eq:beta-example-2}
    \int_s^t \frac{(\tilde{s}-s)^{-1/2}}{(t-\tilde{s})^{1/4}}\,d\tilde{s}
=\int_0^{t-s}\frac{u^{-1/2}}{(t-s-u)^{1/4}}\,du
=B\!\Big(\tfrac12,\tfrac34\Big)\,(t-s)^{1/4}.
\end{equation}
Expanding and using Fubini:
\begin{align}
\int_s^t \frac{1}{(t-\tilde{s})^{1/4}}
\Big(\int_s^{\tilde{s}}\frac{A(\theta)}{(\tilde{s}-\theta)^{1/4}}\,d\theta\Big)d\tilde{s} \nonumber
&=
\int_s^t A(\theta)\Big(\int_{\tilde{s}=\theta}^{t}
\frac{d\tilde{s}}{(t-\tilde{s})^{1/4}(\tilde{s}-\theta)^{1/4}}\Big)d\theta \nonumber\\
&=
B\!\Big(\tfrac34,\tfrac34\Big)
\int_s^t (t-\theta)^{1/2}A(\theta)\,d\theta
\;\le\;C_T\!\int_s^t A(\theta)\,d\theta. \label{eq:beta-example-2.5}
\end{align}
Inserting the bounds \eqref{eq:beta-example-2} and \eqref{eq:beta-example-2.5} into \eqref{eq:Hineq} yields
\[ Q(t) \le C_T+C_T\int_s^t A(\theta)\,d\theta. \]
Return to \eqref{eq:base_volterra} at time $t$:
\[
A(t)
\;\le\;
C_0\,(t-s)^{-1/2}
\;+\;
c\,Q(t)
\;\le\;
C_T\,(t-s)^{-1/2}
\;+\;
C_T\!\int_s^t A(\theta)\,d\theta,
\]
Then, the usual Gr\"onwall inequality with integral terms gives
$$
A(t)\le C_T\,(t-s)^{-1/2},
$$
as claimed, and the proof of Lemma \ref{lemmaA} is completed.
\end{proof}

The following is a lemma bounding the $k$-th norm of the Malliavin derivative.

\begin{lemma}\label{lmalden} There exists a constant $C_{T,k,\sigma,b}$ such that, for all $0\leq s <t \leq T$ and $ x, y \in [0,1],$
\begin{equation}\label{lpnormmalliavin} 
\mathbb{E}\left(|D_{s,y} u (t,x)|^k\right) ^{\frac{1}{k}}\leq \frac{C_{T,k,\sigma,b}}{(t-s)^{\frac{1}{4}}} .
\end{equation}
\end{lemma}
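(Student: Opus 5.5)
We prove \eqref{lpnormmalliavin} by working with the evolution equation \eqref{PPddfo3} for $D_{s,y}u(t,x)$ with $\mathcal G = H$, and reducing it to the Volterra inequality of Lemma~\ref{lemmaA}. One subtlety is that we cannot apply Lemma~\ref{lemmaA} directly to the $k$-th moment itself, since the stochastic term gives a square. Instead we set
\[
A(t):=\sup_{x,y\in[0,1]}\E\left(|D_{s,y}u(t,x)|^k\right)^{2/k},\qquad t\in(s,T],
\]
and aim to show $A(t)\le C(t-s)^{-1/2}$. Taking square roots then gives the $(t-s)^{-1/4}$ bound. To avoid assuming a priori that $A$ is finite, we first run the argument on the Picard iterates $u_n$, exactly as in the proof of Theorem~\ref{thm:malliavin-derivative}, obtaining bounds uniform in $n$. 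We then pass to the limit by closability and the mollification/Fatou argument used there. Alternatively, we iterate the recursive inequality in $n$ and sum.

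First we bound each of the three terms of \eqref{PPddfo3} in $L^k(\Omega)$.

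The initial term $H_{t-s}(x,y)\sigma(u(s,y))$ is bounded by $C_\sigma\|H_{t-s}(x,\cdot)\|_{L^\infty}\le C(t-s)^{-1/4}$ by Corollary~\ref{4thgreenbounds}. Its square is $C(t-s)^{-1/2}$.

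For the stochastic integral, we use BDG and Minkowski as in \eqref{eq:555}, together with $|\hat m|\le\Lip(\sigma)$. This gives the bound
\[
z_k^2\Lip(\sigma)^2\int_s^t\int_0^1 H_{t-\theta}(x,r)^2\,\E\left(|D_{s,y}u(\theta,r)|^k\right)^{2/k}dr\,d\theta .
\]
We bound this by $\int_s^t A(\theta)\|H_{t-\theta}(x,\cdot)\|_{L^2}^2\,d\theta\le C\int_s^t A(\theta)(t-\theta)^{-1/4}d\theta$, again by Corollary~\ref{4thgreenbounds}.

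For the drift term, Minkowski's inequality gives a bound by $\Lip(b)\int_s^t\int_0^1|H_{t-\theta}(x,r)|\,\E(|D u|^k)^{1/k}\,dr\,d\theta$. Applying Cauchy--Schwarz in $(\theta,r)$ exactly as in \eqref{eq:1003}, its square is at most
\[
T\Lip(b)^2\int_s^t A(\theta)\|H_{t-\theta}(x,\cdot)\|_{L^2}^2\,d\theta\le C\int_s^t A(\theta)(t-\theta)^{-1/4}\,d\theta .
\]

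Combining the three pieces with $(a+b+c)^2\le 3(a^2+b^2+c^2)$ and taking the supremum over $x,y$ yields
\[
A(t)\le C_0(t-s)^{-1/2}+c\int_s^t A(\theta)(t-\theta)^{-1/4}\,d\theta .
\]
This is precisely \eqref{eq:base_volterra}, so Lemma~\ref{lemmaA} gives $A(t)\le C_T(t-s)^{-1/2}$. The conclusion \eqref{lpnormmalliavin} follows, with a constant depending on $T,k,\sigma,b$ (and $\rho$).

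The main obstacle is justifying the Gr\"onwall step rigorously, since Lemma~\ref{lemmaA} implicitly needs $A$ to be finite and measurable. At the level of the iterates, $A_{n+1}(t)\le C_0(t-s)^{-1/2}+c\int_s^t A_n(\theta)(t-\theta)^{-1/4}d\theta$ with $A_0=0$. By induction, each $A_n$ is bounded by the solution of the corresponding Volterra equation. The proof of Lemma~\ref{lemmaA} also shows that the iterated kernel bounds sum geometrically, since the Beta-function factors decay factorially. This gives $\sup_n A_n(t)\le C_T(t-s)^{-1/2}$. The passage from $u_n$ to $u$ is then identical to the end of the proof of Theorem~\ref{thm:malliavin-derivative}.
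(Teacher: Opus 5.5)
Your proposal is correct and follows essentially the same route as the paper: the paper sets $M(t)=\sup_x(\E|D_{s,y}u(t,x)|^k)^{2/k}$ and bounds the three terms of \eqref{PPddfo3} via Corollary~\ref{4thgreenbounds}, obtaining the Volterra inequality \eqref{eq:base_volterra}, and concludes with Lemma~\ref{lemmaA}; the only technical variation is that for the drift it uses Cauchy--Schwarz in space and then in time, rather than jointly in $(\theta,r)$ as you do. The main difference is that the paper applies Lemma~\ref{lemmaA} to $M$ directly without checking a priori finiteness, so your Picard-iterate step is an added layer of rigor rather than a departure; note that the uniform-in-$n$ bound there comes from a direct computation of the iterated Beta integrals (a Mittag-Leffler-type series), not from anything already contained in the proof of Lemma~\ref{lemmaA}.
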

\begin{proof}

We set
\[
U(t):=\sup_{x\in[0,1]}\big(\E|D_{s,y}u(t,x)|^k\big)^{\frac{1}{k}},\qquad
M(t):=U(t)^2,
\]
and we will prove that 
\begin{equation}\label{eq:Volterra_core}
M(t)\;\le\; C_{k,b,\sigma}\,(t-s)^{-1/2} \;+\; C_{k,b,\sigma}\!\int_s^t (t-r)^{-1/4}\,M(r)\,dr,
\end{equation}
Recalling the form of the Malliavin derivative in \eqref{PPddfo3} with $H$ replacing $\mathcal{G}$, we have
\begin{equation*}
\begin{split}
| D_{s,y}u(t,x)|^{k} \leq  & C_k |H_{t-s}(x,y)\sigma(u(s,y)) |^{k}
 \;+\;
C_k \left|\int_{s}^{t}\int_{0}^{1}H_{t-\theta}(x,r)m(\theta,r)D_{s,y}u(\theta,r) dr d\theta \right|^{k} 
\\
&\;+\; C_k \left|\int_{s}^{t}\int_{0}^{1} H_{t-\theta}(x,r) \hat{m}(\theta,r)D_{s,y}u(\theta,r) W(dr,d \theta)\right|^{k} ,
\end{split} 
\end{equation*}
and so 
\begin{align}
\E(| D_{s,y}u(t,x)|^k)^{\frac{1}{k}} 
 \leq &   C_k \E \left(|H_{t-s}(x,y)\sigma(u(s,y)) |^{k}\right)^{\frac{1}{k}} \nonumber
\\
& +\,
C_k \E \left( \left|\int_s^t\int_0^1H_{t-\theta}(x,r)m(\theta,r)D_{s,y}u(\theta,r) dr d\theta \right|^{k}\right)^{\frac{1}{k}} \nonumber
\\
& +\; C_k \E \left(\left|\int_s^t\int_0^1 H_{t-\theta}(x,r) \hat{m}(\theta,r)D_{s,y}u(\theta,r) W(dr,d \theta)\right|^{k}\right)^{\frac{1}{k}}. \label{eq:M-comb-I1I2I3}
\end{align}
We estimate each of these summands in $L^k(\Omega)$ uniformly in $x$. Denoting
\begin{align*}
\mathcal{I}_1(t,x)&:=H_{t-s}(x,y)\sigma(u(s,y)),
\\
\mathcal{I}_2(t,x)&:=\int_s^t\int_0^1H_{t-\theta}(x,r)m(\theta,r)D_{s,y}u(\theta,r) dr d\theta,
\\
\mathcal{I}_3(t,x)&:=\int_s^t\int_0^1 H_{t-\theta}(x,r) \hat{m}(\theta,r)D_{s,y}u(\theta,r) W(dr,d \theta).
\end{align*}

By boundedness of $\sigma$ and Corollary \ref{4thgreenbounds},
\begin{equation}\label{eq:I1bound}
\sup_x (\E|\mathcal{I}_1(t,x)|^k)^{1/k}
\;\le\;
\|\sigma\|_\infty\,\sup_x  \|H_{t-s}(x,\cdot)\|_{L^\infty}
\;\le\;
C_{\sigma,T}\,(t-s)^{-1/4}.
\end{equation}

Using Minkowski inequality, the Cauchy–Schwarz inequality in the spatial variable, the a.s.\ boundedness of $m$, and Corollary \ref{4thgreenbounds}, we obtain
$$
\begin{aligned}
(\E|\mathcal{I}_2(t,x)|^k)^{1/k}
&\le\int_s^t\int_0^1H_{t-\theta}(x,r)\big(\E[|m(\theta,r)D_{s,y}u(\theta,r)|^k]\big)^{1/k} dr d\theta
\\
&\le
C_b \int_s^t 
\|H_{t-\theta}(x,\cdot)\|_{L^2}
\big(\sup_{r}(\E|D_{s,y}u(\theta,r)|^k)^{1/k}\big)d \theta \\
&\le
C_{b,T}
\int_s^t (t-\theta)^{-1/8}\,U(\theta)\,d\theta.
\end{aligned}
$$
Squaring both sides and taking the $\sup$ over $x$, we obtain
\begin{equation}\label{eq:I2bound}
\sup_x (\E|\mathcal{I}_2(t,x)|^k)^{2/k}
\;\le\;
C_{b,T}\left(\int_s^t (t-\theta)^{-1/8}\,U(\theta)\,d\theta\right)^2
\;\le\;
C_{b,T}\!\int_s^t (t-\theta)^{-1/4}\,U(\theta)^2\,d\theta,
\end{equation}
where the last inequality follows from Cauchy–Schwarz inequality.

We now turn to estimating $\mathcal{I}_3$. By Burkholder–Davis–Gundy inequality, Minkowski inequality and Corollary \ref{4thgreenbounds}, we have
$$
\begin{aligned}
(\E|\mathcal{I}_3(t,x)|^k)^{1/k}
&\le
C_{k} \,\|\Lip(\sigma)\|_\infty
\Bigg(\int_s^t
\|H_{t-\theta}(x,\cdot)\|_{L^2}^2
\big(\sup_r (\E|D_{s,y}u(\theta,r)|^k)^{2/k}\big)d\theta\Bigg)^{1/2}\\
&\le
C_{k,\sigma,T}
\Bigg(\int_s^t (t-\theta)^{-1/4}\,U(\theta)^2\,d\theta\Bigg)^{1/2}.
\end{aligned}
$$ Squaring yields
\begin{equation}\label{eq:I3bound}
\sup_x (\E|\mathcal{I}_3(t,x)|^k)^{2/k}
\;\le\;
C_{k,\sigma,T}\!\int_s^t (t-\theta)^{-1/4}\,U(\theta)^2\,d\theta.
\end{equation}
Plugging \eqref{eq:I1bound}, \eqref{eq:I2bound}, and \eqref{eq:I3bound} into \eqref{eq:M-comb-I1I2I3}, we obtain
\[
M(t)
=\sup_x (\E|D_{s,y}u(t,x)|^k)^{2/k}
\;\le\;
C_{\sigma,T,k} \,(t-s)^{-1/2}
\;+\;
C_{\sigma,T,k,b}\!\int_s^t (t-\theta)^{-1/4}\,M(\theta)\,d \theta,
\]
This completes the proof of \eqref{eq:Volterra_core}.

Now, we apply Lemma \ref{lemmaA} to $M(t)$ to conclude that, for all $x,y \in [0,1],$
$$
\E\left(|D_{s,y}u(t,x)|^k\right)^{\frac{1}{k}} \leq \frac{C_{T,\sigma,k,b}}{(t-s)^{\frac{1}{4}}},
$$
which completes the proof of Lemma \ref{lmalden}.
\end{proof}

\section{Proof of the Theorem for $\kappa=0$}\label{sec:proof-kappa=0}

Theorem \ref{maintheorem} is a consequence of Theorem \ref{thm:nualart-criterion-existence}, applied to the random field $u$ solving \eqref{sCH} under Regimes \ref{:dirichlet} and \ref{case:neumann}. The first condition in Theorem \ref{thm:nualart-criterion-existence} for $K = [0,T] \times [0,1]$  is well-known. The fact that $\E \left( \sup_{(t,x) \in K} |u(t,x)|^2 \right) < \infty$  follows by splitting $u(t,x)$ into its deterministic component including $u_0$ and the stochastic component including the nonlinear and stochastic terms, and obtaining the finiteness for each of these terms separately. The finiteness of the first term follows immediately by the continuity of $u_0$, while for the other terms by an application of Kolmogorov's criterion. The fact that $u(t,x) \in \mathbb{D}^{1,2}$ was shown in \cite{pardouxzhang}.

By proving the second condition in Theorem \ref{thm:nualart-criterion-existence} for $K = [0,T] \times [0,1]$, we can infer that $\sup_{(t,x) \in [0,T] \times [0,1]} u(t,x) \in \mathbb{D}^{1,2}$. This is the subject of Section \ref{subsec:second-cond-k=0}.

If conditions (i) and (ii) of Theorem \ref{thm:nualart-criterion-existence} are true for the random field $\{u(t,x): (t,x) \in [0,T] \times [0,1]\}$, they are also true for the restricted field $\{u(t,x): (t,x) \in K\}$, where $K$ is an arbitrary compact subset of $[0,T] \times [0,1]$. In particular, $ \sup_{(t,x) \in K} u(t,x) \in \DD^{1,2}$ for any compact subset $K$ of $[0,T] \times [0,1]$. Moreover, to obtain that the law of $\sup_{(t,x) \in K} u(t,x)$ is absolutely continuous with respect to the Lebesgue measure for any compact $K$ specified in the statement of Theorem \ref{maintheorem}, it suffices to check the third condition in Theorem \ref{thm:nualart-criterion-existence} for the set $K$. This is the subject of Section \ref{subsubsec:k=0sup-compacts}, while Sections \ref{subsubsec:gammaholder} and \ref{subsubsec:k=0-small-ball} give some useful estimates. 

In proving the absolute continuity of the law of $\sup_{(t,x) \in [0,T] \times [0,1]} u(t,x)$, we impose in addition Assumption \ref{ass:uo-holder}. The proof of the third condition in Theorem \ref{thm:nualart-criterion-existence} for $K = [0,T] \times [0,1]$ is conducted in Section \ref{subsubsec:k=0-whole-space}.

In this section, we will use $G$ to denote either $G^{\operatorname{D}}$ or $G^{\operatorname{N}}$. When an argument is only true or necessary for one of these two kernels, we will use the superscript to avoid notational ambiguity.

\subsection{Proof of the second condition} \label{subsec:second-cond-k=0}

To derive the second condition of Theorem \ref{thm:nualart-criterion-existence}, we must verify the continuity of the map
\begin{equation} \label{eq:process-cont}
    (t,x) \mapsto D_{\cdot,\cdot} u(t,x) \in L^2([0,T] \times [0,1]),
\end{equation}
up to modification of the process. To do so, we aim to apply Kolmogorov's criterion (Theorem \ref{thm:kolmogorov}) with $\mathbb{B} = L^2([0,T] \times [0,1])$, equipped with the usual norm of this space. More precisely, it suffices to prove that, for all $ \, t, \bar{t} \in [0,T],\; x, \bar{x}\in [0,1],$ there exist some $\lambda_1,\lambda_2 \in (0,1]$ such that, for some $p > \frac{1}{\lambda_1} + \frac{1}{\lambda_2}$,
\begin{equation} \label{eq:cond-2-kolmogorov}
\begin{split}
    \E \| D_{\cdot,\cdot} u(t,x)-  D_{\cdot,\cdot} u(\bar t,\bar x) \|_{L^2([0,T] \times [0,1])}^p &= \mathbb{E}\bigg(\bigg(\int_0^T \int_0^1|D_{s,y}u(t,x)-D_{s,y}u(\bar{t},\bar{x})|^2 dy ds \bigg)^{\frac{p}{2}}\bigg) \\
    &\leq C \left(\left|t-\bar{t}\right|^{\lambda_1} +\left|x-\bar{x}\right|^{\lambda_2}\right)^p.
\end{split}
\end{equation}
Then, there exists a $\mathbb{B}-$valued modification of the process in \eqref{eq:process-cont} that is continuous.

We suppose without loss of generality that $ \, \bar{t} < t \,$  and we have
\begin{align}\label{e26625}
D_{s,y}u(t,x)- D_{s,y}u(\bar{t}, \bar{x})
&=\left(G_{t-s}(x,y)-G_{\bar{t}-s}( \bar{x},y) \right)\sigma(u(s,y)) 
\nonumber\\
&  +\int_{s}^{t}\int_{0}^{1}\left(G_{t-\theta}(x,r)-G_{\bar{t}-\theta}( \bar{x},r)\right)m(r,\theta)D_{s,y}u(\theta,r) drd \theta 
\nonumber\\
&  +\! \int_s^{t}\!\!\!\int_{0}^{1} \left(G_{t-\theta}(x,r)-G_{\bar{t}-\theta}( \bar{x},r) \right) \hat{m}(r,\theta)D_{s,y}u(\theta,r) W(dr,d \theta),
\end{align}
\noindent
where we set $G_{t}(x,y)=0$ for $t<0$. This means, for instance, that in the interval $\theta \in (\bar{t},t)$ we have $G_{\bar{t}-\theta}(x,y)=0$ since $\bar{t}-\theta<0$ and only the term $G_{t-\theta}$ remains in the difference.

Using the fact that $\sigma$ is bounded, we write
\begin{equation}\label{choco1}
\begin{split}
 \E&\left[\left( \int_{0}^{T} \int_{0}^{1} |D_{s,y} u(t,x)- D_{s,y}u(\bar{t}, \bar{x})|^2 dy ds  \right)^{\frac{p}{2}}\right]
\\
&\le 
\;C_p  \left(\int_{0}^{T} \int_0^1 |G_{t-s}(x,y)-G_{\bar{t}-s}(\bar{x},y)|^{2} dy ds \right)^{\frac{p}{2}} 
\\
& \;+ C_p \E \left( \int_0^T \int_0^1 \left(\int_{s}^{t}\int_{0}^{1} \left( G_{t-\theta}(x,r)-G_{\bar{t}-\theta}( \bar{x},r)\right) m(r,\theta) D_{s,y}u(\theta,r) drd \theta \right)^2 dy ds \right)^{\frac{p}{2}} 
\\
& \;+ C_p \E \left(\int_0^T \int_0^1 \left(  \int_{s}^{t}\int_{0}^{1} \left(G_{t-\theta}(x,r)-G_{\bar{t}-\theta}( \bar{x},r)\right) \hat{m}(r,\theta)D_{s,y}u(\theta,r) W(dr,d \theta) \right)^{2} dy ds \right)^{\frac{p}{2}} 
\\
&=I_1 + I_2 + I_3,
\end{split}
\end{equation}
where we have set
\begin{eqnarray*}
I_1 &:=& \left(\int_{0}^{T} \int_0^1 |G_{t-s}(x,y)-G_{\bar{t}-s}(\bar{x},y)|^{2} dy ds \right)^{\frac{p}{2}} ,
\\
I_2 &:=& \E \left( \int_0^T \int_0^1 \left(\int_{s}^{t}\int_{0}^{1} \left( G_{t-\theta}(x,r)-G_{\bar{t}-\theta}( \bar{x},r)\right) m(r,\theta) D_{s,y}u(\theta,r) drd \theta \right)^2 dy ds \right)^{\frac{p}{2}} ,
\\
I_3&:=&\E \left(\int_0^T \int_0^1 \left(  \int_{s}^{t}\int_{0}^{1} \left(G_{t-\theta}(x,r)-G_{\bar{t}-\theta}( \bar{x},r)\right) \hat{m}(r,\theta)D_{s,y}u(\theta,r) W(dr,d \theta) \right)^{2} dy ds \right)^{\frac{p}{2}} .
\end{eqnarray*}

We start by providing an estimate for $I_1$. 
Using that $(a+b)^p \le 2^{p-1} (a^p + b^p)$ for $p > 1$ and Corollary \ref{cor:space-time-diff}, we can bound the term $ I_ 1 $ by 
\begin{equation}
\begin{split}
I_1=\left(\int_{0}^{T} \int_0^1 |G_{t-s}(x,y)-G_{\bar{t}-s}(\bar{x},y)|^{2} dy ds \right)^{\frac{p}{2}} 
&\leq C_{\alpha,T,p} \left( |\bar{x}-x|^{2 \alpha}+|\bar{t}-t|^{\alpha} \right)^{\frac{p} {2}} \\
&\leq C_{\alpha,T,p} \left( |\bar{x}-x|^{\alpha}+|\bar{t}-t|^{\alpha/2} \right)^{p}.\label{I_1bound}
\end{split}
\end{equation}

We proceed with bounding the term $I_2$. We start by bounding the quantity inside the $dyds$-integral. In particular, we write
\small
\begin{equation} \label{eq:I2-1}
\begin{split}
&\left(\int_{s}^{t}\int_{0}^{1} \left( G_{t-\theta}(x,r)-G_{\bar{t}-\theta}( \bar{x},r)\right) m(r,\theta) D_{s,y}u(\theta,r) drd \theta \right)^2  \\
&\qquad\leq  
 C_{b} \left(\int_{s}^{t}\int_{0}^{1}|G_{t-\theta}(x,r)-G_{\bar{t}-\theta}( \bar{x},r)|^2 dr d \theta \right) \int_{s}^{t}\int_{0}^{1}|D_{s,y}u(\theta,r)|^2 drd \theta  \\
& \qquad\leq C_{b,\alpha,T} \left(|t-\bar{t}|^{\alpha}+|x-\bar{x}|^{2\alpha}\right)\int_{s}^{t}\int_{0}^{1}|D_{s,y}u(\theta,r)|^2 drd \theta .
\end{split}
\end{equation}
For the second line we used Cauchy-Schwarz and the fact that $m(r, \theta)$ is a random variable uniformly bounded by $\Lip(b)$. For the third line, we used Corollary \ref{cor:space-time-diff} with again $\alpha \in (0,1/2)$.
From \eqref{eq:I2-1}, we can bound the term $I_2$ by
\begin{align}
I_{2} & = \E\left[\left( \int_{0}^{T} \int_{0}^{1} \left(\int_{s}^{t}\int_{0}^{1} \left( G_{t-\theta}(x,r)-G_{\bar{t}-\theta}( \bar{x},r)\right) m(r,\theta) D_{s,y}u(\theta,r) drd \theta \right)^2 dy ds \right)^{\frac{p}{2}}\right] \nonumber \\
& \leq \E\left[\left( \int_{0}^{T} \int_{0}^{1} C_{b,\alpha,T} \left(|t-\bar{t}|^{\alpha}+|x-\bar{x}|^{2\alpha}\right)\int_{s}^{t}\int_{0}^{1}|D_{s,y}u(\theta,r)|^2 drd \theta dy ds \right)^{\frac{p}{2}}\right] \nonumber \\
& \leq C_{b,\alpha,T,p} \left(|t-\bar{t}|^{\alpha}+|x-\bar{x}|^{2\alpha}\right)^{\frac{p}{2}}\E\left[\left( \int_{0}^{T} \int_{0}^{1} \int_{s}^{t}\int_{0}^{1}|D_{s,y}u(\theta,r)|^2 drd \theta dy ds \right)^{\frac{p}{2}}\right]. \label{eq:i2-2}
\end{align}
Using Minkowski integral inequality for $p\ge 2$, we get 
\begin{align}
\mathbb{E}\left[\left( \int_0^T \int_0^1 \int_s^t\int_0^1|D_{s,y}u(\theta,r)|^2 drd \theta dy ds \right)^{\frac{p}{2}}\right]  \nonumber
&\leq
\left(\int_0^T \int_0^1 \int_s^t \int_0^1 \mathbb{E}\left( |D_{s,y} u (\theta,r)|^{p} \right)^{\frac{2}{p}} dr d \theta dy ds \right)^{\frac{p}{2}}  \nonumber
\\
& \leq C_{p,T} \left(\int_0^T \int_0^1 \int_s^t \int_0^1 p^2_{\theta-s}(r,y) dr d \theta dy ds \right)^{\frac{p}{2}} \nonumber \\
& \leq C_{p,T} \left(\int_0^T \int_0^1 \int_s^t \left(\frac{1}{\theta-s}\right)^{1/2} d \theta dy ds \right)^{\frac{p}{2}} \nonumber
\\
&\leq C_{p,T} , \label{eq:i2-3}
\end{align}
The second line follows by the estimate in Theorem \ref{thm:malliavin-derivative}; the third by \eqref{eq:p-L2}. Thus, combining \eqref{eq:i2-2}--\eqref{eq:i2-3} and using again that $(a+b)^p \le 2^{p-1} (a^p + b^p)$ for $p > 1$,
\begin{align} 
I_2&\leq C_{b,\alpha,T,p} \left(|t-\bar{t}|^{\alpha/2}+|x-\bar{x}|^{\alpha }\right)^{p}.
\label{I_2bound}
\end{align}
Next we bound the term $I_3$. First we apply Minkowski integral inequality and the Burkholder-Davis-Gundy inequality respectively with $p \geq 2$ to obtain
\begin{multline}
\E
\left(\left[\int_0^{t} \int_0^1 
\left(\int_s^{t}\int_0^1 \left(G_{t-\theta}(x,r)-G_{\bar{t}-\theta}(\bar{x},r)\right) \, \hat{m}(r,\theta) \, D_{s,y}u(\theta,r) \, W(dr,d\theta)\right)^2 dy ds\right]^{\frac{p}{2}}\right)
\\
\le C_T \left(\int_0^t \int_0^1 
\left(\E\left[\left(\int_s^{t}\int_0^1 \left(G_{t-\theta}(x,r)-G_{\bar{t}-\theta}(\bar{x},r)\right) \, \hat{m}(r,\theta) \, D_{s,y}u(\theta,r) \, W(dr,d\theta)\right)^p\right]\right)^{\frac{2}{p}} dy ds \right)^{\frac{p}{2}} \\
\le C_{T,p,\sigma}
\left(\int_0^t \int_0^1
\left(\E\left|\int_s^{t}\int_0^1 \left(G_{t-\theta}(x,r)-G_{{{\bar{t}}}-\theta}(\bar{x},r)\right)^2 \,| D_{s,y}u(\theta,r)|^2 drd\theta \right|^{p/2} \right)^{\frac{2}{p}}\, dy ds \right)^{\frac{p}{2}}  \label{eq:i3-2}
\end{multline}
For the last line, we also used that $|\hat{m}(\theta,r)| \le \Lip(\sigma)$ a.s.. Moreover, by another appeal to Minkowski's integral inequality for $p \geq 2$,
\begin{multline} \label{eq:i3-3}
    \left(\E\left|\int_s^{t}\int_0^1 \left(G_{t-\theta}(x,r)-G_{{{\bar{t}}}-\theta}(\bar{x},r)\right)^2 \,| D_{s,y}u(\theta,r)|^2 drd\theta \right|^{p/2} \right)^{\frac{2}{p}} \\
    \le  \int_s^{t}\int_0^1
\left( \left(G_{t-\theta}(x,r)-G_{\bar t-\theta}(\bar x,r)\right)^p \, \E\left[\big|D_{s,y} u(\theta,r)\big|^p\right] \right)^{\frac{2}{p}}dr\,d\theta \\
 \le  C_{T,p} \int_s^{t}\int_0^1
\left(G_{t-\theta}(x,r)-G_{\bar t-\theta}(\bar x,r)\right)^2
p_{(\theta-s)}(r,y)^2\,dr\,d\theta.
\end{multline}
The last line followed once again from Lemma \ref{thm:malliavin-derivative}. By plugging \eqref{eq:i3-3} in \eqref{eq:i3-2} and the form of $I_3$,
\begin{equation}\label{eq:preICH}
\begin{split}
I_3 &\leq  C_{T,p,\sigma} \, \left(\int_0^{t}\int_0^1
\int_s^{t}\int_0^1
\left(G_{t-\theta}(x,r)-G_{\bar t-\theta}(\bar x,r)\right)^2
p_{(\theta-s)}(r,y)^2\,dr\,d\theta dy\,ds \right)^{\frac{p}{2}} \\
& \leq C_{T,p,\sigma,\alpha} \left(\left( |x-\bar{x}|^{2\alpha} + |t-\bar{t}|^{\alpha} \right) \int_0^{t}\int_0^1  (t-s)^{-\frac{1}{2}-\alpha}dy\,ds \right)^{\frac{p}{2}} \\
&\leq C_{T,p,\sigma,\alpha} \left( |x-\bar{x}|^{\alpha }+|t-\bar{t}|^{\alpha/2} \right)^{p}.
\end{split}
\end{equation}
For the second line, we used \eqref{eq:product-bound}--\eqref{eq:time-increment-product-correct} again with $\alpha = 2\beta \in (0,1/2)$; the third line follows again by the fact that $(a+b)^p \le 2^{p-1} (a^p + b^p)$, and since $\int_0^t (t-s)^{-\frac{1}{2}-\alpha} ds \leq C_{T}$ for $\alpha \in (0,1/2)$.

Combining the bounds we obtained for $I_1,I_2,I_3$ in respectively \eqref{I_1bound}, \eqref{I_2bound}, and \eqref{eq:preICH}, we conclude using \eqref{choco1} that
\begin{equation} \label{eq:malliavin-difference-L2}
 \mathbb{E}\left[\left( \int_{0}^{T} \int_{0}^{1} |D_{s,y} u(t,x)- D_{s,y}u(\bar{t}, \bar{x})|^2 dy ds  \right)^{\frac{p}{2}}\right] \leq C_{T,p,b,\sigma,\alpha} \left(|x-\bar{x}|^{\alpha}+|t-\bar{t}|^{\alpha/2}\right)^{p},
\end{equation}
yielding \eqref{eq:cond-2-kolmogorov} with $\lambda_1 = \alpha/2$ and $\lambda_2 = \alpha, \;\alpha \in (0,1/2)$. Since these estimates hold for all $p \ge 2$, we can in particular fix $\alpha \in (0,1/2)$, and then $p \ge \frac{3}{\alpha}$, so that we can apply Kolmogorov's criterion. We conclude that the process $\left\{D_{\cdot,\cdot} u(t,x), t \in [0,T], x \in [0,1] \right\}$ possesses a continuous, $L^{2}(dy \, ds)$-valued modification.

\begin{remark}
    In fact, the computations rehearsed in this section prove the stronger statement that the process $\{ D_{\cdot,\cdot} u(t,x), (t,x) \in [0,T] \times [0,1]\}$ (as an $L^2$-valued random variable) posseses a modification that is $(\mu_1,\mu_2)$-Hölder continuous in $(t,x)$, for all $\mu_1 < 1/4$ and all $\mu_2 < 1/2$.
\end{remark}

To finish the proof of condition (ii) in Theorem \ref{thm:nualart-criterion-existence}, it suffices to show that 
\[\E \left( \sup_{(t,x) \in [0,T] \times [0,1]} \| D_{\cdot,\cdot} u(t,x) \|^2_{L^2([0,T]\times[0,1])} \right) < \infty.
\]
In turn, this follows by Theorem \ref{thm:kolmogorov} if there exist $t_0\in(0,T]$, $x_0\in[0,1]$, and $p\ge2$ such that
\[
\E\big\|Du(t_0,x_0)\big\|_{L^2([0,T]\times[0,1])}^{p}<\infty.
\]

We have 
\begin{multline}
    \E \|D_{\cdot,\cdot} u(t_0,x_0)\|^p_{L^2([0,T] \times [0,1])} =  \E \left( \int_0^T \int_0^1 | D_{s,y} u(t_0,x_0) |^2 dy ds \right)^{p/2} \leq  \left(  \int_0^{t_0} \int_0^1 \E \left(| D_{s,y} u(t_0,x_0) |^p \right)^{\frac{2}{p}} dy ds \right)^{p/2} \\
    \le C \left( \int_0^{t_0} \int_0^1 p_{t_0-s}^2 (x,y) dy ds   \right)^{p/2} \leq C_{T}
\end{multline}
where we applied Minkowski integral inequality and used Lemma \ref{thm:malliavin-derivative}.
 Then, Kolmogorov's criterion says that
\begin{equation} \label{eq:Kolmogorov-supremum}
    \E \left( \sup_{(t,x) \in [0,T] \times [0,1]} \int_0^T \int_0^1 | D_{s,y} u(t,x)|^2 dy ds \right) < \infty,
\end{equation}
finishing the proof of the second condition.

\subsection{Proof of the third condition}\label{sec:3rdcondk1}
We prove the third condition of Theorem \ref{thm:nualart-criterion-existence}. Define the random field
\begin{equation} \label{def-gamma}
    \gamma(t,x)=\int_{0}^{T}\int_0^1 |D_{s,y}u(t,x)|^2 dy ds, \quad (t,x) \in [0,T] \times [0,1].
\end{equation}
In Sections \ref{subsubsec:gammaholder} and \ref{subsubsec:k=0-small-ball} we prove some preliminary estimates for the field $\gamma$. In section \ref{subsubsec:k=0sup-compacts}, we prove the third condition in Theorem \ref{thm:nualart-criterion-existence} for compact subsets $K \subseteq (0,T] \times (0,1)$ under Regime \ref{:dirichlet}, and $K \subseteq (0,T] \times [0,1]$ under Regime \ref{case:neumann}. In Section \ref{subsubsec:k=0-whole-space}, we strengthen this to all compact sets $K \subseteq [0,T] \times [0,1]$ for both Regimes \ref{:dirichlet} and \ref{case:neumann} when in addition Assumption \ref{ass:uo-holder} is in place.

\subsubsection{H\"older continuity of $\gamma(t,x)$} \label{subsubsec:gammaholder}
In the next lemma we show that the process $\gamma$ in \eqref{def-gamma} has a continuous version.

\begin{lemma} \label{lemma-gamma-holder}
    The following estimate holds for $\alpha\in(0,1), p > \frac{3}{\alpha}$ and $(t,x),(\bar{t},\bar{x})\in[0,T]\times[0,1]$:
\begin{equation}\label{eq:w-inc}
 \E\Bigg|
\int_0^T\!\!\int_0^1\left(|D_{s,y}u(t,x)|^2-|D_{s,y}u(\bar t,\bar x)|^2\right)\,dy\,ds
\Bigg|^{p} \
\le C\left(|x-\bar{x}|^\alpha +|t-\bar{t}|^{\alpha /2}\right)^p.
\end{equation}
In particular, the field $\gamma$ defined in \eqref{def-gamma} is $(\mu_1,\mu_2)-$H\"older continuous in $(t,x)$ for all $\mu_1 < 1/4$ and all $\mu_2< 1/2$.
\end{lemma}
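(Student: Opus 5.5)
The plan is to reduce the increment of $\gamma$ to the $L^2$-increment of the Malliavin derivative that was already controlled in \eqref{eq:malliavin-difference-L2}. Write $\|\cdot\|$ for the norm of $L^2([0,T]\times[0,1])$. By the identity $|a|^2-|b|^2=(a-b)(a+b)$ and Cauchy--Schwarz in $(s,y)$,
\begin{equation*}
|\gamma(t,x)-\gamma(\bar t,\bar x)|
=\Big|\big\langle Du(t,x)-Du(\bar t,\bar x),\,Du(t,x)+Du(\bar t,\bar x)\big\rangle\Big|
\le \|Du(t,x)-Du(\bar t,\bar x)\|\,\big(\|Du(t,x)\|+\|Du(\bar t,\bar x)\|\big).
\end{equation*}
I then raise this to the power $p$, take expectations, and apply Cauchy--Schwarz in $\Omega$. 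This gives
\begin{equation*}
\E|\gamma(t,x)-\gamma(\bar t,\bar x)|^p\le \Big(\E\|Du(t,x)-Du(\bar t,\bar x)\|^{2p}\Big)^{1/2}\Big(\E\big(\|Du(t,x)\|+\|Du(\bar t,\bar x)\|\big)^{2p}\Big)^{1/2}.
\end{equation*}

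For the second factor I need a bound uniform in $(t,x)$. The Minkowski integral inequality together with Theorem \ref{thm:malliavin-derivative} and \eqref{eq:p-L2} gives
\begin{equation*}
\E\|Du(t,x)\|^{2p}\le \Big(\int_0^t\!\int_0^1 \big(\E|D_{s,y}u(t,x)|^{2p}\big)^{1/p}\,dy\,ds\Big)^{p}\le C\Big(\int_0^t (t-s)^{-1/2}\,ds\Big)^{p}\le C_{T,p}.
\end{equation*}
This is exactly the computation done just before \eqref{eq:Kolmogorov-supremum}, now with exponent $2p$. For the first factor I invoke \eqref{eq:malliavin-difference-L2} with $2p$ in place of $p$. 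It yields $\big(\E\|Du(t,x)-Du(\bar t,\bar x)\|^{2p}\big)^{1/2}\le C(|x-\bar x|^{\alpha}+|t-\bar t|^{\alpha/2})^{p}$, which is \eqref{eq:w-inc}.

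There is one point to check about the range of $\alpha$. The bound \eqref{eq:malliavin-difference-L2} was derived for $\alpha\in(0,1/2)$ through Corollary \ref{cor:space-time-diff} and Lemma \ref{lemma:space-time-increments-G}. For $\alpha\in[1/2,1)$ I would have to revisit those kernel estimates. In Lemma \ref{lemma:space-time-increments-G}\ref{lem:G-space-increment-L2} one may take any $\alpha<1$, but then the integral $\int_0^t(t-s)^{-1/2-\alpha}\,ds$ diverges once $\alpha\ge1/2$. I therefore expect the honest range to be $\alpha<1/2$. Since the stated Hölder exponents $\mu_1<1/4$, $\mu_2<1/2$ are in any case obtained by letting $\alpha\uparrow 1/2$ and $p\to\infty$, I would prove the estimate for $\alpha\in(0,1/2)$. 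This range suffices for the conclusion, and I expect it is the intended one. Everything else is routine.

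Finally, I apply Theorem \ref{thm:kolmogorov} with $\mathbb B=\RR$, $\lambda_1=\alpha/2$, $\lambda_2=\alpha$ and $p>3/\alpha=1/\lambda_1+1/\lambda_2$. This gives a version of $\gamma$ that is $\mu_1$-Hölder in time for $\mu_1<\frac{\alpha}{2}(1-\frac{3}{\alpha p})$ and $\mu_2$-Hölder in space for $\mu_2<\alpha(1-\frac{3}{\alpha p})$. Letting $p\to\infty$ and $\alpha\uparrow1/2$ yields all $\mu_1<1/4$ and all $\mu_2<1/2$. This version of $\gamma$ coincides a.s. with the one obtained by composing the continuous $L^2$-valued modification of $Du$ with the squared norm.
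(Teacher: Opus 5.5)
Your proposal is correct and follows the paper's proof essentially step for step: you factor $|a|^2-|b|^2=(a-b)(a+b)$, apply Cauchy--Schwarz in $(s,y)$ and then in $\Omega$, bound the difference factor by \eqref{eq:malliavin-difference-L2} with $2p$ in place of $p$, bound the sum factor via Minkowski and Theorem~\ref{thm:malliavin-derivative}, and conclude with Kolmogorov's criterion. Your restriction to $\alpha\in(0,1/2)$ is also right, because the paper's argument likewise rests on \eqref{eq:malliavin-difference-L2}, which is only established for that range, and that range already gives the stated exponents $\mu_1<1/4$, $\mu_2<1/2$.
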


\begin{proof}
We rewrite 
\begin{equation*}
|D_{s,y}u(t,x)|^2-|D_{s,y}u(\bar t,\bar x)|^2
= (D_{s,y}u(t,x) + D_{s,y}u(\bar t,\bar x)) (D_{s,y}u(t,x) - D_{s,y}u(\bar t,\bar x)) := J_1 J_2
\end{equation*}
Now, by two applications of Cauchy--Schwarz, we can recast the left hand side of \eqref{eq:w-inc} as
\begin{multline} \label{eq:5.1}
    \E\Bigg|
\int_0^T\!\!\int_0^1\left(|D_{s,y}u(t,x)|^2-|D_{s,y}u(\bar t,\bar x)|^2\right)\,dy\,ds
\Bigg|^p
=
\E\Bigg|
\int_0^T\!\!\int_0^1 J_1J_2\,dy\,ds
\Bigg|^p  
\\
\le \E \left[
\Bigg(\int_0^T\!\!\int_0^1 J_1^2\,dy\,ds\Bigg)^{p/2}
\Bigg(\int_0^T\!\!\int_0^1 J_2^2\,dy\,ds\Bigg)^{p/2} \right] \\
\le
\Bigg(\E\left(\int_0^T\!\!\int_0^1 J_1^2\,dy\,ds\right)^p\Bigg)^{1/2}
\Bigg(\E\left(\int_0^T\!\!\int_0^1 J_2^2\,dy\,ds\right)^p\Bigg)^{1/2}.
\end{multline}
For the $J_2$ term, recalling \eqref{eq:malliavin-difference-L2} we have that
\begin{equation} \label{eq:diff-malliavin}
\begin{split}
\Bigg(\E\left(\int_0^T\!\!\int_0^1 J_2^2\,dy\,ds\right)^p\Bigg)^{1/2} &= \mathbb{E}\left[\left( \int_{0}^{T} \int_{0}^{1} |D_{s,y} u(t,x)- D_{s,y}u(\bar{t}, \bar{x})|^2 dy ds  \right)^{p}\right]^{1/2}  \\
&\le \left( C_{T,p,b,\sigma,\alpha} \left(|x-\bar{x}|^{\alpha}+|t-\bar{t}|^{\alpha/2}\right)^{2p} \right)^{1/2} \\
&\le C_{T,p,b,\sigma,\alpha} \left(|x-\bar{x}|^{\alpha}+|t-\bar{t}|^{\alpha/2}\right)^{p}.
\end{split}
\end{equation}
Moreover, 
\begin{equation} \label{eq:diff-malliavin-2}
\begin{split}
    \Bigg(\E\left(\int_0^T\!\!\int_0^1 J_1^2\,dy\,ds\right)^p\Bigg)^{1/2} &= \mathbb{E}\left[\left( \int_{0}^{T} \int_{0}^{1} |D_{s,y} u(t,x) + D_{s,y}u(\bar{t}, \bar{x})|^2 dy ds  \right)^{p}\right]^{1/2} \\
    &\le C_p \left(  \int_0^T \int_0^1  \left( (\E | D_{s,y} u(t,x)|^{2p})^{1/p} + (\E | D_{s,y} u(\bar{t}, \bar{x})|^{2p})^{1/p}  \right) dy ds     \right)^{1/2} \\
    &\le C_{p,T} \left(  \int_0^T \int_0^1  p^2_{t-s}(y,x) + p^2_{\bar{t}-s}(y,\bar{x}) dy ds     \right)^{1/2} \\
    &\le C_{p,T},
\end{split}
\end{equation}
where the second line follows by Minkowski's integral inequality and the third line by Lemma \ref{thm:malliavin-derivative}.

Plugging \eqref{eq:diff-malliavin} and \eqref{eq:diff-malliavin-2} into \eqref{eq:5.1}, we get \eqref{eq:w-inc}. The claim on the H\"older continuity follows by another application of Kolmogorov's criterion.
\end{proof}

\subsubsection{Small ball estimate for $\gamma(t,x)$} \label{subsubsec:k=0-small-ball}
Next, we show the following small ball estimate for $\gamma(t,x)$ in Regime \ref{:dirichlet}. 
Throughout the rest of this section, for any $\delta>0$ we set 
\begin{equation} \label{eq:Sdelta}
    S_{\delta}=[\delta, T] \times [\delta, 1-\delta].
\end{equation}
Remark \ref{rmk:small-ball-dirichlet-to-neumann} highlights the differences in passing to Regime \ref{case:neumann}.

\begin{lemma} \label{smallballestimatestochasticheat} Let $q \geq 1$. There exist constants $C_{q}>0$ and $ \psi_\delta > 0$ such that, for any $0 < y < \psi_\delta$
\begin{equation} \label{eq:gamma-small-ball}
\sup_{(t,x) \in S_{\delta}} \PP( \gamma(t,x) \leq y) \leq C_{q} y^q.
\end{equation}
\end{lemma}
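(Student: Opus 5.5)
We follow the standard small-ball argument used for pointwise densities of the stochastic heat equation (e.g. Pardoux--Zhang or Nualart's book, Proposition 2.4.4). Fix $(t,x)\in S_\delta$ and a small parameter $\varepsilon\in(0,\delta)$ to be tied to $y$ later. We keep only the part of $\gamma(t,x)$ coming from $s\in[t-\varepsilon,t]$:
\[
\gamma(t,x)\ge \int_{t-\varepsilon}^t\int_0^1 |D_{s,y}u(t,x)|^2\,dy\,ds .
\]
By \eqref{PPddfo3} we write $D_{s,y}u(t,x)=G_{t-s}(x,y)\sigma(u(s,y))+R_{s,y}(t,x)$, where $R$ collects the drift and stochastic integral terms. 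Using $(a+b)^2\ge \tfrac12a^2-b^2$ we get $\gamma(t,x)\ge \tfrac12 A_\varepsilon - B_\varepsilon$. Here
\[
A_\varepsilon=\int_{t-\varepsilon}^t\int_0^1 G_{t-s}(x,y)^2\sigma(u(s,y))^2\,dy\,ds,
\qquad
B_\varepsilon=\int_{t-\varepsilon}^t\int_0^1 |R_{s,y}(t,x)|^2\,dy\,ds.
\]

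\textbf{Lower bound for $A_\varepsilon$.} By Assumption~\ref{assum:b-s}, $\sigma^2\ge C_\sigma^{-2}$, so $A_\varepsilon\ge C_\sigma^{-2}\int_0^\varepsilon\int_0^1 G_r(x,y)^2\,dy\,dr$. We need $\int_0^1 G^{\operatorname{D}}_r(x,y)^2dy\ge c\,r^{-1/2}$ uniformly for $x\in[\delta,1-\delta]$ and $r\le\varepsilon\le\varepsilon_0(\delta)$. This follows from the proof of Lemma~\ref{lem:G-L2}(iii), precisely \eqref{eq:L2lower-step2}: that bound holds for all $x\in[1/N,1-1/N]$ with $N=\lfloor r^{-1/2}\rfloor$, and $[\delta,1-\delta]\subset[1/N,1-1/N]$ once $r\le\delta^2/4$. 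Hence $A_\varepsilon\ge c_0\varepsilon^{1/2}$ deterministically, with $c_0$ independent of $(t,x)\in S_\delta$.

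\textbf{Moment bound for $B_\varepsilon$.} For $p\ge1$ we estimate $\E|B_\varepsilon|^p$ by Minkowski, Cauchy--Schwarz for the drift part and BDG for the stochastic part, exactly as in the bounds for $I_2$ and $I_3$ in Section~\ref{subsec:second-cond-k=0}. Theorem~\ref{thm:malliavin-derivative} gives $\|D_{s,y}u(\theta,r)\|_{L^{2p}(\Omega)}\le C p_{\theta-s}(r-y)$, which reduces the stochastic part to
\[
\int_{t-\varepsilon}^t\int_0^1\int_s^t\int_0^1 G_{t-\theta}(x,r)^2\,p_{\theta-s}(r-y)^2\,dr\,d\theta\,dy\,ds
\le C\int_{t-\varepsilon}^t\int_s^t (t-\theta)^{-1/2}(\theta-s)^{-1/2}\,d\theta\,ds\le C\varepsilon .
\]
Here we integrated $y$ first, using $\int p_{\theta-s}^2dy\le C(\theta-s)^{-1/2}$, and then used $\int_0^1 G^2\,dr\le C(t-\theta)^{-1/2}$ from \eqref{eq:G-L2}. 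The drift part is even smaller, of order $\varepsilon^{3/2}$ (an extra factor $t-s\le\varepsilon$ from Cauchy--Schwarz). Thus $\E|B_\varepsilon|^p\le C_p\varepsilon^p$, uniformly in $(t,x)$. The key point is that $B_\varepsilon$ is of order $\varepsilon$, strictly smaller than the $\varepsilon^{1/2}$ order of $A_\varepsilon$.

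\textbf{Conclusion.} Given small $y$, we choose $\varepsilon$ with $\tfrac{c_0}{4}\varepsilon^{1/2}=y$, that is $\varepsilon=(4y/c_0)^2$. This requires $y<\psi_\delta$ so that $\varepsilon\le\varepsilon_0(\delta)\wedge\delta$, which also guarantees $t-\varepsilon\ge0$. Then, by Markov's inequality,
\[
\PP(\gamma(t,x)\le y)\le \PP\big(B_\varepsilon\ge \tfrac{c_0}{4}\varepsilon^{1/2}\big)\le \frac{C_p\varepsilon^p}{\varepsilon^{p/2}}=C_p\varepsilon^{p/2}=C_p' y^{p}.
\]
Taking $p=q$ gives \eqref{eq:gamma-small-ball}, uniformly over $S_\delta$.

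The main obstacle is the uniform deterministic lower bound on $\int_0^1 G^{\operatorname{D}}_r(x,\cdot)^2$ near the Dirichlet boundary, which is where $\delta$ enters. For Neumann the lower bound is easier since $G^{\operatorname{N}}\ge$ the free kernel contribution.
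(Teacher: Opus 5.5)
Your proof is correct and follows the paper's argument: restrict to $s\in[t-\varepsilon,t]$, split using $(a+b)^2\ge\tfrac12a^2-b^2$, bound the leading term below deterministically by $c\,\varepsilon^{1/2}$ and the remainder's moments by $C_q\varepsilon^q$ via Theorem~\ref{thm:malliavin-derivative}, then apply Markov's inequality with $\varepsilon\asymp y^2$. The only cosmetic difference is in the lower bound $\int_0^1 G^{\operatorname{D}}_r(x,y)^2\,dy\ge c\,r^{-1/2}$ on $[\delta,1-\delta]$: you derive it from the paper's own estimate \eqref{eq:L2lower-step2}, using $1/N\le 2\sqrt r\le\delta$ for $r\le\delta^2/4$, whereas the paper cites (B.2.11) of \cite{dalangsole}.
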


\begin{proof}
Fix $(t,x)\in S_\delta$ and $\varepsilon>0$. Then, 
\[
\gamma(t,x)\ge
J_\varepsilon(t,x)
:=
\int_{t-\varepsilon}^t\int_0^1
|D_{s,y}u(t,x)|^2\,dy\,ds.
\]
For $s<t$, recalling the form of $D_{s,y} u(t,x)$ in \eqref{PPddfo3} and using the estimate $(a+b)^2\ge \frac12 a^2-b^2$, we can bound $J_\varepsilon$ as in \cite{farazakiskaralistavrianidi} by
\begin{equation} \label{eq:j1-r1-r2}
\begin{split}
J_\varepsilon(t,x)
&\ge \tfrac12 \mathcal{R}_{1}(\varepsilon;t,x)
- \mathcal{R}_{2}(\varepsilon;t,x),
\end{split}
\end{equation}
with
\begin{equation}
\begin{split}
    \mathcal{R}_{1}&:= \int_{t-\varepsilon}^t\; \int_0^1
\sigma(u(s,y))^2 G_{t-s}(x,y)^2\,dy\,ds,\\
\mathcal{R}_{2} &:= \int_{t-\varepsilon}^t\;\!\int_0^1 \left( \int_{s}^{t}\int_{0}^{1}G_{t-\theta}(x,r)m(r,\theta)D_{s,y}u(\theta,r) drd \theta \right. \\
&\quad  + \left. \int_{s}^{t}\int_{0}^{1} G_{t-\theta}(x,r) \hat{m}(r,\theta)D_{s,y}u(\theta,r) W(dr,d \theta) \right)^2 dy\,ds.
\end{split}
\end{equation}
Uniform ellipticity yields
\begin{equation} \label{eq:r1-uniform-elipticity}
\mathcal{R}_1\ge (C_\sigma)^{-2}\int_{t-\varepsilon}^t\;\int_0^1
G_{t-s}(x,y)^2\,dy\,ds = (C_\sigma)^{-2}
\int_0^\varepsilon\int_0^1 G_r(x,y)^2\,dy\,dr.
\end{equation}
Setting $
c_1:=\frac{\sqrt{2\pi}-1}{8\pi} $ and $ c_2 :=  (C_\sigma)^{-2} c_1$, the bound (B.2.11) in \cite{dalangsole} for Dirichlet boundary conditions says there exists $c(\delta)$ such that, for all $0<\varepsilon< r(\delta):=\min\{\frac{c_1}{c(\delta)}, \delta^2, (1-\delta)^2 \}$ 
 \begin{equation} \label{lowerbound}
\mathcal{R}_1\ge (C_\sigma)^{-2} \int_0^\varepsilon \int_{x - \sqrt{\varepsilon}}^{x+\sqrt{\varepsilon}} G_r(x,y)^2\,dy\,dr \ge c_2 \,\varepsilon^{1/2}.
\end{equation}
Now we proceed by bounding the moments of $\mathcal{R}_2$. For every $q\ge1$, we claim that there exists some $C_q > 0$ such that
\begin{equation}
\sup_{(t,x)\in S_\delta}
\E\!\left[\clr_2(\varepsilon;t,x)^q\right]
\le C_q\,\varepsilon^{q}.
\end{equation}
Using $(a+b)^2\le 2a^2+2b^2$,
\begin{eqnarray}
\clr_2(\varepsilon;t,x)
&\leq&
2\int_{t-\varepsilon}^t\int_0^1
 \left( \int_s^t\int_0^1
G_{t-r}(x,z)\,
m(r,z)\,
D_{s,y}u(r,z)\,dz\,dr   \right)^2 \,dy\,ds  
\nonumber\\
& &+2\int_{t-\varepsilon}^t\int_0^1
 \left( \int_s^t\int_0^1
G_{t-r}(x,z)\,
\hat{m}(r,z)\,
D_{s,y}u(r,z)\,
W(dz,dr) \right)^2 \,dy\,ds 
\nonumber\\
&=& 2\int_{t-\varepsilon}^t\int_0^1
 \clr_{2,1}(s,y;t,x)^2 \,dy\,ds 
+2\int_{t-\varepsilon}^t\int_0^1
 \clr_{2,2}(s,y;t,x)^2 \,dy\,ds, \label{eq:r2-r21-r22}
\end{eqnarray}
where
\begin{flalign*}
\clr_{2,1}(s,y;t,x)
&:=
\int_s^t\int_0^1
G_{t-r}(x,z)\,
m(r,z)\,
D_{s,y}u(r,z)\,dz\,dr,
\\
\clr_{2,2}(s,y;t,x)
&:=
\int_s^t\int_0^1
G_{t-r}(x,z)\,
\hat{m}(r,z)\,
D_{s,y}u(r,z)\,
W(dz,dr).
\end{flalign*}
Since $m$ is bounded a.s., for fixed $(s,y)$, Cauchy-Schwarz in $(z,r)$ yields
\begin{align*}
    \clr_{2,1}(s,y;t,x)^2 &\le C_b \left( \int_s^t\int_0^1
|G_{t-r}(x,z)|\,
|D_{s,y}u(r,z)|\,dz\,dr \right)^2 \\
&\le C_b
\left(
\int_s^t
\|G_{t-r}\|_{L^2([0,1])}^2\,dr
\right)
\left(
\int_s^t
\|D_{s,y}u(r,\cdot)\|_{L^2([0,1])}^2\,dr
\right).
\end{align*}
For $x\in[\delta,1-\delta]$, using Lemma \ref{lem:G-L2} and the fact that $t-\varepsilon \le s \le t$,
\[
\int_s^t
\|G_{t-r}\|_{L^2([0,1])}^2\,dr
=
\int_0^{t-s}
\|G_{\tau}\|_{L^2([0,1])}^2\,d\tau
\le C_T
\int_0^{\varepsilon}
\tau^{-1/2}\,d\tau
= C_T
\varepsilon^{1/2}.
\]
Thus, 
\begin{equation*}
\begin{split}
    \int_{t-\varepsilon}^t\;\int_0^1
 \clr_{2,1}(s,y;t,x)^2 \,dy\,ds 
&\le
C_{T,b}
\varepsilon^{1/2}
\int_{t-\varepsilon}^t\; \int_0^1
\int_s^t
\|D_{s,y}u(r,\cdot)\|_{L^2([0,1])}^2 
\,dr\,dy\,ds \\
&= C_{T,b}
\varepsilon^{1/2}
\int_{t-\varepsilon}^t
\int_{t-\varepsilon}^r
\int_0^1
\|D_{s,y}u(r,\cdot)\|_{L^2([0,1])}^2
\,dy\,ds\,dr,
\end{split}
\end{equation*}
where we have used Fubini's theorem. Moreover, Theorem \ref{thm:malliavin-derivative} and calculations analogous to \eqref{eq:i2-3} say that
\begin{equation} \label{eq:0234}
    \sup_{r\le T}
\E\Big[
\Big(
\int_0^r\!\!\int_0^1 
\|D_{s,y}u(r,\cdot)\|^2_{L^2([0,1])} dy\,ds
\Big)^q
\Big]
\leq C_{T,q}.
\end{equation}
Since $r-s\le\varepsilon$, we obtain
\begin{equation} \label{eq:R21-estimate}
\begin{split}
\E \left( \int_{t-\varepsilon}^t\;\int_0^1
 \clr_{2,1}(s,y;t,x)^2 \,dy\,ds \right)^q
&\le C_{T,b,q}\varepsilon^{q/2}
\E \left( \int_{t-\varepsilon}^t
\int_{t-\varepsilon}^r
\int_0^1
\|D_{s,y}u(r,\cdot)\|_{L^2([0,1])}^2
\,dy\,ds\,dr \right)^q \\ 
&\le C_{T,b,q}\varepsilon^{q/2}
\left( \int_{t-\varepsilon}^t
C_{T,q} dr \right)^q \\
&\leq C_{T,b,q}\varepsilon^{3q/2} \le C_{T,b,q}\varepsilon^{q},
\end{split}
\end{equation}
where for the second line we used Minkowski's integral inequality and \eqref{eq:0234}.
To estimate $\clr_{2,2}$, we write
\begin{align}
    \E &\left( \int_{t-\varepsilon}^t\;\int_0^1
 \clr_{2,2}(s,y;t,x)^2 \,dy\,ds \right)^q  \nonumber\\
 &\leq \left(\int_{t-\varepsilon}^{t} \int_0^1 \left( \E \left(\int_s^t\!\!\int_0^1
G_{t-r}(x,z)\,
\hat{m}(r,z,u(r,z))\,
D_{s,y}u(r,z)\,
W(dz,dr)\right)^{2q} \right)^{\frac{1}{q}} dy ds \right)^{q} \nonumber\\
 &\le C_{\sigma,q} \left(\int_{t-\varepsilon}^{t} \int_0^1   \E \left[\left(\int_s^t \int_0^1 G_{t-r}(x,z)^2 D_{s,y}u(r,z)^2 dz dr \right)^{q}\right]^{\frac{1}{q}} dy ds  \right)^{q} \nonumber \\
 &\le C_{\sigma,q} \left(\int_{t-\varepsilon}^{t} \int_0^1  \int_s^t \int_0^1 \E \left[G_{t-r}(x,z)^{2q} D_{s,y}u(r,z)^{2q}\right]^{\frac{1}{q}} dz dr dy ds  \right)^{q} \nonumber \\
 &\le C_{\sigma,q} \left(\int_{t-\varepsilon}^{t} \int_0^1  \int_s^t \int_0^1 G_{t-r}(x,z)^{2} p_{r-s}(y-z)^{2} dz dr dy ds  \right)^{q}. \label{eq:6789}
\end{align}
The second and fourth lines follows by Minkowski's integral inequality; the third line follows by BDG; \eqref{eq:6789} follows by Theorem \ref{thm:malliavin-derivative}.

Then we apply Fubini and get
\begin{equation*}
\begin{split}
    \int_{t-\varepsilon}^{t} \int_0^1  \int_s^t \int_0^1 G_{t-r}(x,z)^{2} p_{r-s}(y-z)^{2} dz dr dy ds &\le  \int_{t-\varepsilon}^{t} \int_0^1 G_{t-r}(x,z)^{2} \left( \int_{t-\varepsilon}^r  \int_0^1  p_{r-s}(y-z)^{2} dy ds \right)dz dr \\
    &\le C_{T} \int_{t-\varepsilon}^{t} \int_0^1 G_{t-r}(x,z)^{2} \sqrt{r-(t-\varepsilon)} dz dr \\
    &\leq C_{T} \sqrt{\varepsilon}  \int_{t-\varepsilon}^{t} \int_0^1 G_{t-r}(x,z)^{2} dz dr \leq C_T \varepsilon,
\end{split}
\end{equation*}
which, combined with \eqref{eq:6789}, leads to
\begin{equation} \label{eq:r22-estimate}
    \E \left( \int_{t-\varepsilon}^t\;\int_0^1
 \clr_{2,2}(s,y;t,x)^2 \,dy\,ds \right)^q\leq C_{T,\sigma,q} \varepsilon^{q}.
\end{equation}
Combining \eqref{eq:r2-r21-r22}, \eqref{eq:R21-estimate}, and \eqref{eq:r22-estimate}, we get that 
$$ \E[\clr_2(\varepsilon;t,x) ^q] \leq C_{T,q,b,\sigma} \varepsilon^{q} $$ for a constant $C_{T,q,b,\sigma}$ that does not depend on $(t,x)$. By Markov's inequality,
$$
\PP( \clr_2 > \varepsilon^{\frac{1}{2}}) \leq \frac{\E[\clr_2^q]}{\varepsilon^{\frac{q}{2}}} \leq C_{T,q,b,\sigma} \varepsilon^{\frac{q}{2}}.
$$
Now recalling the constant $c_2$ in \eqref{lowerbound}, \eqref{eq:j1-r1-r2} and the previous equation say that, for all $\varepsilon < r(\delta)$ (defined above \eqref{lowerbound}),
$$
\PP\left( J_{\varepsilon}(t,x) \le  \frac{c_{2}}{4}\varepsilon^{\frac{1}{2}} \right) \le  \PP\left( \frac{1}{2} \clr_1 - \clr_2 \le  \frac{c_{2}}{4}\varepsilon^{\frac{1}{2}} \right) \leq \PP\left(\clr_2 \ge \frac{c_{2}}{4} \varepsilon^{\frac{1}{2}}\right) \leq C_{T,q,b,\sigma} \frac{4^{q}}{c_{2}^{q}}\varepsilon^{\frac{q}{2}} .
$$
This lets us conclude that there exists a constant $C_{T,q,b,\sigma}>0$ not depending on $t,x$ such that, for any $0<y< \frac{c_{2}r(\delta)^{\frac{1}{2}}}{4}$, setting $0 < \varepsilon(y) := \left(\frac{4 y}{c_2}\right)^2 < r(\delta)$
\begin{multline*}
\sup_{(t,x) \in S_{\delta}} \PP( \gamma(t,x) \leq y) \le \sup_{(t,x) \in S_{\delta}} \PP(  J_{\varepsilon(y)}(t,x) \leq y) 
\le  \sup_{(t,x) \in S_{\delta}} \PP(  J_{\varepsilon(y)}(t,x) 
\leq \frac{c_2}{4} \varepsilon(y)^{1/2}) 
\le C_{T,q,b,\sigma} \varepsilon(y)^{q/2}
\\= C_{T,q,b,\sigma} y^{q},
\end{multline*}
i.e., \eqref{eq:gamma-small-ball} holds with $\psi_\delta := \frac{c_{2}r(\delta)^{\frac{1}{2}}}{4}$ and $C_q :=  C_{T,q,b,\sigma}$.
\end{proof}

\begin{remark} \label{rmk:small-ball-dirichlet-to-neumann}
    Lemma \ref{smallballestimatestochasticheat} is also true under Regime \ref{case:neumann}, with $S_\delta$ being replaced by sets of the form
    \begin{equation} \label{eq:Ldelta}
        L_\delta = [\delta,T] \times [0,1].
    \end{equation}
   The only difference in the proof is the lower bound in \eqref{lowerbound}, for which we obtain, from (B.3.7) in \cite{dalangsole}, that
   \begin{equation}
       \clr_1 \ge (C_\sigma)^{-2} \int_0^\varepsilon \int_0^1 G^{\operatorname{N}}_r(x,y)^2 dy dr \ge c_2 \varepsilon^{1/2},
   \end{equation}
   with the same constant $c_2$ unoformly over all $x \in [0,1]$. The rest of the proof is identical.
\end{remark}

\subsubsection{Deducing condition (iii) and existence of density for supremum over compact subsets} \label{subsubsec:k=0sup-compacts}

We will use the H\"older continuity and the small ball estimate we showed for $\gamma(t,x)$ to get the following proposition.
\begin{proposition} \label{prop:Sdelta-K}
Suppose that either (i) we are in Regime \ref{:dirichlet} and $K \subset (0,T]\times(0,1)$, or (ii) we are in Regime \ref{case:neumann} and $K \subset (0,T]\times[0,1]$. Then
\[
\PP \Big(
\exists (t,x)\in K:
\gamma(t,x)=0
\Big)=0.
\]
\end{proposition}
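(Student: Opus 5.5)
The plan is a covering argument that combines the small ball estimate of Lemma~\ref{smallballestimatestochasticheat} (or Remark~\ref{rmk:small-ball-dirichlet-to-neumann} in Regime~\ref{case:neumann}) with the H\"older continuity of $\gamma$ from Lemma~\ref{lemma-gamma-holder}. This is the standard way to pass from pointwise small-ball bounds to a uniform non-vanishing statement.

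\emph{Reduction to $S_\delta$.} Since $K$ is compact and contained in $(0,T]\times(0,1)$, there is $\delta>0$ with $K\subset S_\delta$; in Regime~\ref{case:neumann} we instead take $K\subset L_\delta$. It therefore suffices to prove $\PP(\exists (t,x)\in S_\delta:\gamma(t,x)=0)=0$ for each fixed $\delta>0$.

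\emph{H\"older control.} Fix $\mu_1<1/4$ and $\mu_2<1/2$. By Lemma~\ref{lemma-gamma-holder} and Theorem~\ref{thm:kolmogorov}, the random H\"older constant
\[
M:=\sup_{(t,x)\neq(\bar t,\bar x)}\frac{|\gamma(t,x)-\gamma(\bar t,\bar x)|}{|t-\bar t|^{\mu_1}+|x-\bar x|^{\mu_2}}
\]
is almost surely finite. To be safe, we take this from Kolmogorov's criterion applied to $\gamma$, for which \eqref{eq:w-inc} gives moments of increments of all orders. Then $\PP(M>A)\to0$ as $A\to\infty$; finiteness of $M$ alone is all that is needed.

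\emph{Covering.} For $n\ge1$, cover $S_\delta$ by a grid of points $(t_i,x_j)$ with mesh $n^{-1/\mu_1}$ in time and $n^{-1/\mu_2}$ in space. The number of points is $N_n\le C n^{1/\mu_1+1/\mu_2}$, and every point of $S_\delta$ lies within distance $2n^{-1}$ of some grid point in the metric $|t-\bar t|^{\mu_1}+|x-\bar x|^{\mu_2}$. Suppose $\gamma$ vanishes somewhere in $S_\delta$ and $M\le A$. Then some grid point satisfies $\gamma(t_i,x_j)\le 2A/n$. By a union bound and Lemma~\ref{smallballestimatestochasticheat}, for $n$ large enough that $2A/n<\psi_\delta$,
\[
\PP\big(\exists (t,x)\in S_\delta:\gamma(t,x)=0,\ M\le A\big)\le N_n\sup_{S_\delta}\PP\big(\gamma(t,x)\le 2A/n\big)\le C n^{1/\mu_1+1/\mu_2}C_q(2A)^q n^{-q}.
\]
Choosing $q>1/\mu_1+1/\mu_2$ and letting $n\to\infty$ shows this probability is $0$. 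Letting $A\to\infty$ then gives the claim.

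\emph{Main obstacle.} The key point is that the small ball bound is uniform over $S_\delta$ and holds for arbitrarily large $q$, so that it beats the polynomial growth of the covering number. Lemma~\ref{smallballestimatestochasticheat} supplies exactly this. What remains to check is that the continuous version of $\gamma$ used here is the one appearing in condition~\ref{crit:nualart-3}, i.e.\ that it coincides with $\int\!\!\int|D_{s,y}u(t,x)|^2\,dy\,ds$ computed from the continuous $L^2$-valued version of $Du$ from Section~\ref{subsec:second-cond-k=0}. This holds because the norm of a continuous $L^2$-valued process is continuous, and two continuous modifications of the same field are indistinguishable.
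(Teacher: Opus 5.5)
Your proof is correct and follows essentially the same route as the paper: localize on a bounded H\"older constant of $\gamma$, cover $S_\delta$ (or $L_\delta$) by a grid whose cardinality grows polynomially, and beat that growth with the uniform small-ball bound of Lemma~\ref{smallballestimatestochasticheat} taken with $q > 1/\mu_1 + 1/\mu_2$. The differences are minor: you refine the mesh directly on the fixed event $\{M \le A\}$, whereas the paper uses a geometric sequence $\varepsilon_m$ and Borel--Cantelli; and your remark identifying the continuous version of $\gamma$ makes explicit a point the paper leaves implicit.
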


\begin{proof}
We only prove the statement under Regime \ref{:dirichlet}, since the proof of the statement in Regime \ref{case:neumann} is analogous.

Since $K$ is a compact subset of $(0,T]\times (0,1)$, there exists $\delta>0$ such that 
\[
K \subseteq S_{\delta}=[\delta,T]\times [\delta, 1- \delta].
\]
By Lemma \ref{lemma-gamma-holder}, the field $\gamma(t,x)$ admits a $(\mu_1,\mu_2)$--H\"older continuous version on $S_\delta$ for all $\mu_1<1/4$ and $\mu_2<1/2$. 
Hence there exists an a.s.\ finite random variable $C_\delta(\omega)$ such that for all $(t,x),(\bar t,\bar x)\in S_\delta$,
\begin{equation} \label{eq:gamma-holder-cd}
    |\gamma(t,x)-\gamma(\bar t,\bar x)|
\le 
C_\delta(\omega)\big(
|t-\bar t|^{\mu_1}+|x-\bar x|^{\mu_2}
\big).
\end{equation}
Fix such $\mu_1,\mu_2$.
We proceed by a localization argument. From the a.s.\ finiteness of $C_\delta$,
\begin{equation*}
    E_n := \{ C_\delta \le n \}, \qquad n\in\mathbb N, \quad \PP\left(\bigcup_{n=1}^\infty E_n\right)=1.
\end{equation*}
It therefore suffices to prove that for each fixed $n$,
\[
\PP\Big(
\exists (t,x)\in K : \gamma(t,x)=0
\ \cap E_n
\Big)=0.
\]
Fix $n\in\mathbb N$ and work on the event $E_n$. 
\eqref{eq:gamma-holder-cd} says that, on $E_n$, $\gamma$ is H\"older continuous with deterministic constant $n$.

Take $\eta>1$ such that $\frac{1}{\eta}< \psi_\delta = \frac{c_{2}r(\delta)^{1/2}}{4}$, where the constants were introduced in the proof of Lemma \ref{smallballestimatestochasticheat}; then, letting $\varepsilon_m:=\eta^{-m}$, we have that $0<\varepsilon_{m}< \frac{c_{2} r(\delta)^{\frac{1}{2}}}{4}$ for all $m$. Set $\rho_{m,n}:=\frac{\varepsilon_m}{2n}$. 
Then on $E_n$, if we select $(t,x), (\bar t, \bar x) \in S_\delta$ such that
\begin{equation} \label{eq:grid-estimate}
 |t-\bar t|< \rho_{m,n}^{1/\mu_1}
\quad\text{and}\quad
|x-\bar x|< \rho_{m,n}^{1/\mu_2},   \quad \text{then }|\gamma(t,x)-\gamma(\bar t,\bar x)| \le C_\delta \left( |t-\bar t|^{\mu_1} + |x-\bar x|^{\mu_2}  \right) \le \varepsilon_m.
\end{equation}

Let $N_1:=\left\lceil \frac{T-\delta}{\rho_{m,n}^{1/\mu_1}} \right\rceil$ and
$N_2:=\left\lceil \frac{1-2\delta}{\rho_{m,n}^{1/\mu_2}} \right\rceil$,
and set $t_i:=\delta+i\frac{T-\delta}{N_1}, i = 0,\dots,N_1$, $x_j:=\delta+j\frac{1-2\delta}{N_2}, j = 0,\dots N_2$.
Let $S_{m,n}:=\{(t_i,x_j):0\le i\le N_1,\ 0\le j\le N_2\}$. Then, the deterministic grid $S_{m,n}$ is finite and its size is upper bounded by
\[
|S_{m,n}| \le \left(  \left\lceil \frac{T}{\rho_{m,n}^{1/\mu_1}} \right\rceil +1 \right)  \left( \left\lceil \frac{1}{\rho_{m,n}^{1/\mu_2}} \right\rceil + 1  \right) \le C_T \rho_{m,n}^{-\left(\frac1{\mu_1}+\frac1{\mu_2}\right)},
\]
and moreover, for all $(t,x) \in S_\delta$, there exists a $(\bar t, \bar x) \in S_{m,n}$ such that \eqref{eq:grid-estimate} holds.
Recalling the form of $\rho_{m,n}$ we have that
\[
\text{setting }k=\frac1{\mu_1}+\frac1{\mu_2}, \quad \text{then }|S_{m,n}| \le C_{n,T}\,\varepsilon_m^{-k}.
\]

Now select an $\omega \in E_{n}$ such that there is a $(t,x) \in K$ with $\gamma(t,x;\omega) = 0$
Then for every $m$ there exists $(t_m,x_m)\in S_{m,n}$ such that, thanks to \eqref{eq:grid-estimate},
\[
\gamma(t_m,x_m) = |\gamma(t_m,x_m)- \gamma(t,x)| \le n \left( |t-t_m|^{\mu_1} + |x-x_m|^{\mu_2} \right) < \varepsilon_m.
\]
Hence
\begin{equation} \label{eq:limsup-amn}
\begin{split}
    \PP\Big(
\exists (t,x)\in K:\gamma(t,x)=0
\ \cap E_n
\Big)
&\le
\PP\Big(
\text{for all } m \text{ there exists } (t_m,x_m)\in S_{m,n}
:\gamma(t_m,x_m)<\varepsilon_m
\Big) \\
&\le \PP(\limsup_{m\to\infty} A_{m,n}),
\end{split}
\end{equation}
where we set
\[
A_{m,n}
:=
\{
\exists (t_m,x_m)\in S_{m,n}:
\gamma(t_m,x_m)<\varepsilon_m
\}.
\]
Since $\varepsilon_m < \psi_\delta$ by selection, Lemma \ref{smallballestimatestochasticheat} says that, for all $q \ge 1$,
\[
\PP(A_{m,n})
\le
\sum_{(t_m,x_m)\in S_{m,n}}
\PP(\gamma(t_m,x_m)<\varepsilon_m)
\le |S_{m,n}| C_{q} \varepsilon_m^{q} \le
C_{n,\delta,q}\,
\varepsilon_m^{-k}
\,\varepsilon_m^{q}.
\]
Selecting $q>k$, we obtain $\sum_{m=1}^{\infty} \PP(A_{m,n}) < \infty$.
Hence, by the Borel--Cantelli lemma, $\PP(\limsup_{m\to\infty} A_{m,n})=0$, and therefore \eqref{eq:limsup-amn} says that
\[
\PP\Big(
\exists (t,x)\in K:\gamma(t,x)=0
\ \cap E_n
\Big)=0.
\]
Since $\bigcup_{n=1}^\infty E_n$ has probability one, we conclude that
\[
\PP\Big(
\exists (t,x)\in K:\gamma(t,x)=0
\Big)=0,
\]
which finishes the proof.
\end{proof}

\begin{remark} \label{remarkonsuponthecompact}

Proposition \ref{prop:Sdelta-K} implies that $\PP \Big(
\exists (t,x)\in \mathcal{S}_{K}:
\gamma(t,x)=0
\Big)=0$, which in turn implies that Condition (iii) of Theorem \ref{thm:nualart-criterion-existence} is satisfied for $\phi_ K := \sup_{(s,y) \in K}u(s,y)$. Since conditions (i) and (ii) are also satisfied, Theorem \ref{thm:nualart-criterion-existence} yields that its law is absolutely continuous with respect to the Lebesgue measure.
\end{remark}

\subsubsection{Upgrading the result to $[0,T]\times [0,1]$} \label{subsubsec:k=0-whole-space}
Now we upgrade these results to get existence of a density for the supremum of $u$ on $[0,T] \times [0,1]$, when in addition Assumption \ref{ass:uo-holder} holds.

We first work under Regime \ref{:dirichlet}. Define the following random sets
\begin{align}
    \mathcal{L} &:=  \{(t,x) \in [0,T] \times [0,1] : u(t,x) = \sup_{(s,y) \in [0,T] \times [0,1]} u(s,y) \} \\
    \mathcal{L}_1 &:=  \{(0,x) \in \{0\} \times (0,1) : u(0,x) = \sup_{(s,y) \in [0,T] \times [0,1]} u(s,y) \} \\
    \mathcal{L}_2  &:= \{(t,x) \in [0,T] \times \{0,1\} : u(t,x) = \sup_{(s,y) \in [0,T] \times [0,1]} u(s,y) \} \\
    \mathcal{L}_3  &:= \{(t,x) \in (0,T] \times (0,1) : u(t,x) = \sup_{(s,y) \in [0,T] \times [0,1]} u(s,y) \},
\end{align}
and note that, for all $\omega \in \Omega$, it holds that $\mathcal{L}(\omega) = \mathcal{L}_1(\omega) \cup \mathcal{L}_2(\omega) \cup \mathcal{L}_3(\omega)$. Further, we can decompose $\mathcal{L}_3$ as the union of an increasing sequence of sets as
\begin{equation} \label{eq:l3-decomp-an}
    \mathcal{L}_3(\omega) := \bigcup_{n \ge 3} \left\{(t,x) \in \left[\frac{1}{n},T\right] \times \left[\frac{1}{n},1- \frac{1}{n}\right] : u(t,x;\omega) = \sup_{(s,y) \in [0,T] \times [0,1]} u(s,y;\omega) \right\} = \bigcup_{n \ge 3} A_n(\omega),
\end{equation}
with $A_n := \left\{(t,x) \in \left[\frac{1}{n},T\right] \times \left[\frac{1}{n},1- \frac{1}{n}\right] : u(t,x) = \sup_{(s,y) \in [0,T] \times [0,1]} u(s,y) \right\} \subseteq \left[\frac{1}{n},T\right] \times \left[\frac{1}{n},1- \frac{1}{n}\right]$. We can then estimate
\begin{equation}
   \PP\Big( \{ \omega \in \Omega: \gamma(t_0,x_0;\omega) = 0, \, \text{for some }
 (t_0,x_0)\in \mathcal{L}(\omega) \} ) 
\Big)  \le \sum_{i=1}^3 \PP(\Xi_i),
\end{equation}
with
\begin{equation} \label{eq:prob-decomp}
    \Xi_i := \{\omega \in \Omega: \gamma(t_0,x_0;\omega) = 0, \, \text{for some }(t_0,x_0) \in \mathcal{L}_i(\omega) \}.
\end{equation}
Moreover, in view of \eqref{eq:l3-decomp-an} and Proposition \ref{prop:Sdelta-K} (under Regime \ref{:dirichlet}),
\begin{multline} \label{eq:Xi-vanishes}
    \PP(\Xi_3) = \PP \left( \bigcup_{n \ge 3} \{\omega \in \Omega: \gamma(t_0,x_0;\omega) = 0, \, \text{for some }(t_0,x_0) \in A_n(\omega)\}  \right) \\
     \le\PP \left( \bigcup_{n  \ge 3} \left\{\omega \in \Omega: \gamma(t_0,x_0;\omega) = 0, \, \text{for some }(t_0,x_0) \in \left[\frac{1}{n},T   \right] \times \left[\frac{1}{n},1- \frac{1}{n}\right]\right\}  \right) \\
    \le \sum_{n \ge 3}  \PP \left(  \left\{\omega \in \Omega: \gamma(t_0,x_0;\omega) = 0, \, \text{for some }(t_0,x_0) \in \left[\frac{1}{n},T   \right] \times \left[\frac{1}{n},1- \frac{1}{n}\right]\right\}  \right) = 0.
\end{multline}
We further analyze the two sets $\Xi_1, \Xi_2$. Recall that, if $(t,x) \in \mathcal{L}_1(\omega)$, it follows that $\gamma(t,x) = 0$ since $u(0,x)$ is deterministic. Then,
\begin{multline} \label{eq:Xi1-analysis}
    \PP(\Xi_1) \\
    \le \PP\left(\left\{ \omega : \gamma(0,x_0;\omega) = 0  \text{ for some } (0,x_0) \in \left\{(0,x) \in \{0\} \times (0,1) : u(0,x) = \sup_{(s,y) \in [0,T] \times [0,1]} u(s,y) \right\} \right\} \right) \\
    \le \PP\left(\left\{ \omega : \text{ for some }x_0 \in (0,1),  u(0,x_0) = \sup_{(s,y) \in [0,T] \times [0,1]} u(s,y;\omega)  \right\} \right) \\
    \le \PP\left(\left\{ \omega :  u(0,x^*) = \sup_{(s,y) \in [0,T] \times [0,1]} u(s,y;\omega)  \right\} \right)  \\
    \le \PP\left( \sup_{(t,x)\in [0,T]\times [0,1]}u(t,x)=u_0(x^*) \right),
\end{multline}
where $x^* $ is the point in Assumption \ref{ass:uo-holder}.
Similarly, if $(t,x) \in \mathcal{L}_2(\omega)$, it follows that $\gamma(t,x) = 0$ and $u(t,0) = u(t,1) = 0$, implying that
\begin{multline}
    \PP(\Xi_2) \\
    \le \PP\left(\left\{ \omega : \gamma(t_0,x_0;\omega) = 0  \text{ for some } (t_0,x_0) \in \left\{(t,x) \in [0,T] \times \{0,1\} : u(t,x) = \sup_{(s,y) \in [0,T] \times [0,1]} u(s,y;\omega) \right\} \right\} \right) \\ 
   \le  \PP\left(\left\{ \omega : \text{ for some } (t_0,x_0) \in [0,T] \times \{0,1\} : u(t_0,x_0) = \sup_{(s,y) \in [0,T] \times [0,1]} u(s,y)  \right\} \right) \\
   \le \PP\left( \sup_{(s,y) \in [0,T] \times [0,1]} u(s,y) = 0   \right).
\end{multline}
Note that, if $x^*\in (0,1)$, then
Propositions \ref{dirichletneumannmaximizeratinitial} and \ref{dirichletmaximizeratboundary} below show respectively that $\PP(\Xi_1) = \PP(\Xi_2) = 0$ in Regime \ref{:dirichlet}. If $x^*\in \{0,1\}$, then $u_0(x^*) = 0$ and so again $\PP(\Xi_2) = \PP(\Xi_1) = 0$ by Proposition \ref{dirichletmaximizeratboundary}. Coupled with \eqref{eq:prob-decomp} and \eqref{eq:Xi-vanishes}, this proves that 
\begin{equation*}
    \PP \Big(
\exists (t,x)\in \mathcal{L}:
\gamma(t,x)=0
\Big) \le \sum_{i=1}^3 \PP(\Xi_i) = 0,
\end{equation*}
thus proving condition (iii) in Theorem \ref{thm:nualart-criterion-existence}. 

The proof of Theorem \ref{maintheorem} is then complete under Regime \ref{:dirichlet}.

For Regime \ref{case:neumann}, the situation is simpler. We can take 
\begin{align}
    \mathcal{L} &:=  \{(t,x) \in [0,T] \times [0,1] : u(t,x) = \sup_{(s,y) \in [0,T] \times [0,1]} u(s,y) \} \\
    \mathcal{L}_1 &:=  \{(0,x) \in \{0\} \times [0,1] : u(0,x) = \sup_{(s,y) \in [0,T] \times [0,1]} u(s,y) \} \\
    \mathcal{L}_2  &:= \{(t,x) \in (0,T] \times [0,1] : u(t,x) = \sup_{(s,y) \in [0,T] \times [0,1]} u(s,y) \},
\end{align}
and denote the induced decomposition of the events by $\Xi_1,\Xi_2$. Then, showing $\PP(\Xi_2) = 0$ follows by the same arguments as in \eqref{eq:Xi-vanishes} upon noticing that Proposition \ref{prop:Sdelta-K} (under Regime \ref{case:neumann}) gives the associated compact sets as subsets of the spatial set $[0,1]$. Moreover, the same estimates as in \eqref{eq:Xi1-analysis} yield
\begin{flalign*}
    &\PP(\Xi_1) \\
    &\le \PP\left(\left\{ \omega : \gamma(0,x_0;\omega) = 0  \text{ for some } (0,x_0) \in \left\{(0,x) \in \{0\} \times [0,1] : u(0,x) = \sup_{(s,y) \in [0,T] \times [0,1]} u(s,y) \right\} \right\} \right) \nonumber\\
    &\le \PP\left(\left\{ \omega : \text{ for some }x_0 \in [0,1],  u(0,x_0) = \sup_{(s,y) \in [0,T] \times [0,1]} u(s,y;\omega)  \right\} \right) \nonumber\\
    &\le \PP\left(\left\{ \omega :  u(0,x^*) = \sup_{(s,y) \in [0,T] \times [0,1]} u(s,y;\omega)  \right\} \right)  \nonumber\\
     &\le \PP\left( \sup_{(t,x)\in [0,T]\times [0,1]}u(t,x)=u_0(x^*) \right).
\end{flalign*}
Finally, Proposition \ref{dirichletneumannmaximizeratinitial}(ii) below shows that $\PP(\Xi_1) = 0$ in Regime \ref{case:neumann}, thus proving condition (iii) in Theorem~\ref{thm:nualart-criterion-existence}.

This also finishes the proof of Theorem \ref{maintheorem} under Regime \ref{case:neumann}.



\begin{proposition} \label{dirichletneumannmaximizeratinitial}
\begin{enumerate}[label=(\roman*)]
    \item Suppose Regime \ref{:dirichlet} holds. Then, if $x^* \in (0,1)$, we have that 
    \[
    \PP \left( \sup_{(t,x) \in [0,T] \times [0,1]} u(t,x) = u_0(x^*)\right) = 0.
    \]
    \item Suppose Regime \ref{case:neumann} holds. Then, if $x^* \in [0,1]$, we have that 
    \[
    \PP \left(\sup_{(t,x) \in [0,T] \times [0,1]} u(t,x) = u_0(x^*)\right) = 0.
    \]
\end{enumerate}

\end{proposition}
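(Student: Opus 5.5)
The plan is to show that, almost surely, $u$ exceeds $u_0(x^*)$ somewhere at small positive times near $x^*$. Since $\sup u\ge u_0(x^*)$ always holds (take $t=0$), it suffices to prove
\[
\PP\Big(\sup_{t\in(0,\varepsilon]} u(t,x^*)> u_0(x^*)\Big)=1\quad\text{for every }\varepsilon>0,
\]
or at least along a sequence $t_n\downarrow0$. In Regime~\ref{case:neumann} with $x^*$ on the boundary, and in general, I evaluate at the fixed point $x^*$. In Regime~\ref{:dirichlet} with $x^*\in(0,1)$, I take $t$ small enough that $x^*$ sits well inside $(\sqrt t,1-\sqrt t)$.

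I split $u(t,x^*)-u_0(x^*)=A(t)+B(t)+M(t)$. Here $A(t)=\int_0^1 G_t(x^*,y)(u_0(y)-u_0(x^*))dy+u_0(x^*)(\int_0^1G_t(x^*,y)dy-1)$ is the deterministic part. The term $B(t)$ is the drift integral, and $M(t)=\int_0^t\int_0^1G_{t-s}(x^*,y)\sigma(u(s,y))W(dy,ds)$ is the noise.

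\textbf{The deterministic part.} I bound the first piece of $A$ by splitting the $y$-integral into $|y-x^*|<r_0$ and its complement. On the inner region I use Assumption~\ref{ass:uo-holder} and the Gaussian bound \eqref{eq:G-lower-upper-T}, which gives $O(t^{\alpha/2})$. The outer region is exponentially small. The second piece vanishes for Neumann kernels. For Dirichlet with $x^*$ interior, the mass defect $1-\int G_t^{\operatorname{D}}(x^*,\cdot)$ is exponentially small in $1/t$. So $|A(t)|\le Ct^{\alpha/2}$. By Lipschitz growth of $b$ and the moment bounds on $u$, $|B(t)|\le C(\omega)t$ uniformly for small $t$, using a sup-moment bound.

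\textbf{The noise.} $M(t)$ has quadratic variation of order $\int_0^t\int G_{t-s}^2\asymp t^{1/2}$, because $\sigma$ is bounded above and below by Assumption~\ref{assum:b-s}. Lemma~\ref{lem:G-L2}(ii),(iii) and Remark~\ref{rmk:small-ball-dirichlet-to-neumann} give the lower bound. So $M(t)$ lives at scale $t^{1/4}$. Since $\alpha>1/2$, we have $t^{\alpha/2}=o(t^{1/4})$ and $t=o(t^{1/4})$, so it suffices to show
\[
\limsup_{t\downarrow0} t^{-1/4}M(t)>0\ \text{ a.s.}
\]
(in fact $=+\infty$). This is where Assumption~\ref{ass:uo-holder} with $\alpha>1/2$ is used.

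\textbf{The main obstacle: a law-of-iterated-logarithm type statement for the non-Gaussian $M$.} I would choose $t_n=\eta^{-n}$ with $\eta$ large and write
\[
M(t_n)=N_n+R_n,\qquad N_n=\int_{t_{n+1}}^{t_n}\int_0^1G_{t_n-s}(x^*,y)\sigma(u(s,y))W(dy,ds).
\]
The remainder $R_n$ is the integral over $s<t_{n+1}$. Its variance is of order $\int_{t_n-t_{n+1}}^{t_n}r^{-1/2}dr\cdot$ const, which is at most $C t_n^{1/2}\eta^{-1}$ after bounding $G^2$ in $L^2$ via \eqref{eq:G-L2}. Borel--Cantelli with high moments then gives $|R_n|\le \eta^{-1/4}t_n^{1/4}\log n$ eventually. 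The pieces $N_n$ are martingale increments with respect to the filtration $\mathcal F_{t}$ at the disjoint time blocks $[t_{n+1},t_n]$. Their conditional quadratic variations $\langle N_n\rangle$ lie between $c t_n^{1/2}$ and $Ct_n^{1/2}$ deterministically, by the ellipticity of $\sigma$. A Dambis--Dubins--Schwarz time change writes $N_n=\beta(\tau_n)-\beta(\tau_{n+1})$ for a single Brownian motion $\beta$ whose intervals are of comparable deterministic length. Then $\PP(N_n>\lambda t_n^{1/4}\mid\mathcal F_{t_{n+1}})\ge p(\lambda)>0$ uniformly. Lévy's conditional Borel--Cantelli lemma then shows that $N_n>\lambda t_n^{1/4}$ infinitely often, for any fixed $\lambda$. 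Choosing $\lambda$ larger than the remainder and deterministic errors gives $u(t_n,x^*)>u_0(x^*)$ infinitely often, and hence the claim.

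The delicate point is the uniform conditional lower bound. Here I need the time change of $N_n$, viewed as a process in its own upper time-limit variable $t_n$, to be handled correctly. If this is problematic, the fallback is to use the conditional Gaussian comparison directly. Conditionally on $\mathcal F_{t_{n+1}}$, the increment is a continuous martingale terminal value with bracket in $[c t_n^{1/2},Ct_n^{1/2}]$, and a Paley--Zygmund argument using its second and fourth conditional moments (via BDG) yields $\PP(N_n>\lambda' t_n^{1/4}\mid\mathcal F_{t_{n+1}})\ge p>0$ for some fixed $\lambda'$. In that case I take $\eta$ large so that the remainder scale $\eta^{-1/4}$ is smaller than $\lambda'$.
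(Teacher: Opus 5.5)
Your bounds on $A(t)$ and $B(t)$ are fine. So are the two-sided deterministic bounds on the brackets. The uniform conditional lower bound $\PP(N_n>\lambda t_n^{1/4}\mid\mathcal F_{t_{n+1}})\ge p(\lambda)>0$ can also be obtained, via a separate conditional Dambis--Dubins--Schwarz argument on each block. It cannot come from a single Brownian motion $\beta$, since the kernel $G_{t_n-s}$ changes with $n$.

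The gap is in the last step, where you combine this lower bound with the remainder. Your almost-sure control of $R_n$ is not at a fixed multiple of $t_n^{1/4}$: you get $|R_n|\le \eta^{-1/4}t_n^{1/4}\log n$ eventually. Even that needs exponential rather than fixed-order moment bounds, since $\sum_n(\log n)^{-p}=\infty$; with the exponential martingale inequality the factor becomes $C\eta^{-1/2}\sqrt{\log n}$. A fixed $\lambda$ cannot dominate $\eta^{-1/2}\sqrt{\log n}$ for all large $n$. So ``$N_n>\lambda t_n^{1/4}$ i.o.'' and ``$|R_n|\le\cdots$ eventually'' do not combine to give $u(t_n,x^*)>u_0(x^*)$ i.o.

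The logarithm cannot be removed either. Already for $b=0$, $\sigma\equiv 1$, the $R_n/t_n^{1/4}$ are centered Gaussians with variances bounded below (by $c/\eta$). The germ-field $0$--$1$ law then forces $\limsup_n |R_n|/t_n^{1/4}=\infty$ a.s. Your fallback (``take $\eta$ large so that $\eta^{-1/4}<\lambda'$'') has the same defect, because it silently drops the $\log n$.

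You can close this in one of three ways.

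(a) Work at LIL scale. The per-block conditional DDS representation gives $\PP(N_n>c\sqrt{\log n}\,t_n^{1/4}\mid\mathcal F_{t_{n+1}})\ge n^{-\kappa}$ with $\kappa<1$ for $c$ small. Then choose $\eta$ so large that $C\eta^{-1/2}<c/2$.

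(b) Keep $\lambda$ fixed but drop the requirement of eventual control of $R_n$. Set $F_n=\{|R_n|\le \lambda t_n^{1/4}/2\}\in\mathcal F_{t_{n+1}}$ and condition successively on $\mathcal F_{t_{N+1}}$. This shows the probability that $\{N_n>\lambda t_n^{1/4}\}\cap F_n$ fails for all $n\ge N$ is at most $\sup_n\PP(F_n^c)\le C/(\lambda^2\eta)$. Since $p(\lambda)$ does not depend on $\eta\ge 2$ and the target event does not depend on $\eta$, letting $\eta\to\infty$ gives probability one.

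(c) Follow the paper. The event $\{u(t_n,x^*)>u_0(x^*)\ \text{i.o.}\}$ lies in the trivial germ $\sigma$-field $\mathcal F_{0+}$. So a single-time bound $\PP(u(t,x^*)>u_0(x^*))\ge p_*$ for small $t$ already suffices, and no sequence-wise remainder control is needed. The paper gets this bound by freezing $\sigma$ at $u_0$, which gives a Gaussian main term, and applying Chebyshev to the rest.

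One more point: Lévy's conditional Borel--Cantelli lemma is stated for increasing filtrations, whereas here $\mathcal F_{t_{n+1}}$ decreases in $n$. What you actually need is the elementary bound $\PP(\bigcap_{n=N}^M E_n^c)\le\prod_{n=N}^M(1-p_n)$. It follows by conditioning on $\mathcal F_{t_{N+1}}$, which contains all the events of larger index.
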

\begin{proof}
Since the proofs of the two items are very similar, we prove item (i). We will point out the difference for Regime \ref{case:neumann} in the proof.

We aim to show
\[
\mathbb{P}
\left(
\sup_{t\in(0,T],\,x\in[0,1]} u(t,x) > u_0(x^*) 
\right)
= 1.
\]
It suffices to prove the stronger statement that
\begin{equation*}
\mathbb{P}
\left(
u(t_n,x^{*}) > u_0(x^{*})
\ \text{i.o. as } t_n \downarrow 0
\right)
=1.
\end{equation*}
Define the germ $\sigma$-algebra generated by the Brownian sheet:
\[
\mathcal{F}_{0+}
:=
\bigcap_{\varepsilon>0}
\sigma\Big\{
W(s,x) :
s \le \varepsilon , 0 \leq x \leq 1
\Big\} \vee \mathcal{N}.
\]
The $\sigma$-algebra $\mathcal{F}_{0+}$ is trivial (0–1 law), see for instance Lemma 4.1 from \cite{dalangpu} and Proposition 3.2 from \cite{walsh01law}. Fix any deterministic sequence $t_n \to  0$ and define
\begin{equation} \label{eq:A-def}
    A :=
\left\{
u(t_n,x^{*}) > u_0(x^{*})
\text{ i.o.}
\right\}.
\end{equation}
Since $u(t_n,x^{*})$ is measurable with respect to the noise
restricted to $[0,t_n]\times[0,1]$, the event $A$
depends only on arbitrarily small times.
Hence $A \in \mathcal{F}_{0+}$, and by the 0–1 law,
$\mathbb{P}(A) \in \{0,1\}.$ It therefore remains to show that $\mathbb{P}(A)>0$.

Fix $t>0$ small. Write
\begin{equation} \label{eq:u-decomp-D-Z-R}
 u(t,x^{*}) - u_0(x^{*})
=
D(t) + Z(t)+R(t),   
\end{equation}
where
\begin{equation} \label{eq:D-Z-R-def}
\begin{split}
    D(t)
&:= \int_0^1 G_t(x^{*},y)u_0(y)dy - u_0(x^{*})
+ \int_0^t \int_0^1 G_{t-s}(x^{*},y) b(u(s,y)) dy \, ds, \\
Z(t)
&:= \int_0^t \int_0^1
G_{t-s}(x^{*},y)\,
\sigma(u_0(y))\,
W(ds,dy), \\
R(t)
&:= \int_0^t \int_0^1
G_{t-s}(x^{*},y)\,
\big(\sigma(u(s,y))-\sigma(u_0(y))\big)\,
W(ds,dy).
\end{split}
\end{equation}

Then $Z(t)$ is centered Gaussian with variance
\begin{align}
\mathrm{Var}(Z(t))=
\int_0^t \int_0^1
G_{t-s}(x^{*},y)^2
\sigma(u_0(y))^2
\,dy\,ds \notag \ge (C_\sigma)^{-2}
\int_0^t \int_0^1
G_r(x^{*},y)^2
\,dy\,dr \ge  c_{1} (C_\sigma)^{-2} t^{\frac{1}{2}}
\end{align}
Note that in the above inequality we used the lower bound of the Green's function which is valid for Dirichlet as long as $x^{*} \in (0,1)$ and valid for Neumann for any $x^{*} \in [0,1]$; see the argument performed in \eqref{eq:r1-uniform-elipticity} and Remark \ref{rmk:small-ball-dirichlet-to-neumann} for the Neumann case. This is also the point in the argument that reveals the admissible values for $x^*$ given in the statement.

Hence
\[
\sqrt{\mathrm{Var}(Z(t))} \geq c_1^{\frac{1}{2}}(C_\sigma)^{-1} t^{1/4}.
\]
Therefore there exist constants $c_1,c_2>0$ such that for $t$ small,
\begin{equation} \label{eq:zt-lower-bound}
\mathbb{P}\big(Z(t) \ge c_1 t^{1/4}\big) \ge c_2.
\end{equation}

We now deal with the $D$ term. Decompose the $D$ term to
\begin{equation}\label{eq:D-def}
D(t) = \int_0^1 G_t(x^{*},y)u_0(y)dy-u_0(x^{*})
\;+\;
\int_0^t \int_0^1 G_{t-s}(x^{*},y)b(u(s,y))) \,dy \ ds = D_1(t) + D_2(t),
\end{equation}
where
\begin{align*}
    D_1(t) &:=  \int_0^1 G_t(x^{*},y)u_0(y)dy-u_0(x^{*}),\\
    D_2(t) &:=\int_0^t \int_0^1 G_{t-s}(x^{*},y)b(u(s,y))) \,dy \ ds.
\end{align*}
Since $u_0(x^{*})$ is the maximum of $u_0$ and $\int_0^1 G_t(x^{*},y) \leq 1$, we have 
$
D_1(t)\le 0.
$
Moreover,
\begin{align}
-\,D_1(t)
&= \int_0^1 G_t(x^{*},y)\,\big(u_0(x^{*})-u_0(y)\big)\,dy \notag + \left( 1 -\int_0^1 G_{t}(x^{*},y) dy \right) u_0(x^{*})\\
&= \int_{|y-x^{*}|\le r_0} G_t(x^{*},y)\,\big(u_0(x^{*})-u_0(y)\big)\,dy
\;+\;
\int_{|y-x^{*}|> r_0} G_t(x^{*},y)\,\big(u_0(x^{*})-u_0(y)\big)\,dy \nonumber\\
&\quad+ \left( 1 -\int_0^1 G_{t}(x^{*},y) dy \right) u_0(x^{*}).
\label{eq:D1-split}
\end{align}
Note that for the Neumann kernel $\int_0^1 G_{t}(x^{*},y) dy=1$ so the last term in the third line in \eqref{eq:D1-split} is zero.
We denote 
$$I_{\mathrm{near}}(t):= \int_{|y-x^{*}|\le r_0} G_t(x^{*},y)\,\big(u_0(x^{*})-u_0(y)\big)\,dy,$$
and 
$$I_{\mathrm{far}}(t):= \int_{|y-x^{*}|> r_0} G_t(x^{*},y)\,\big(u_0(x^{*})-u_0(y)\big)\,dy.$$
For the near term, \eqref{eq:local-holder} yields, for some $\alpha > 1/2$,
\begin{align}
I_{\mathrm{near}}(t)
&\le C_0\int_{|y-x^{*}|\le r_0} G_t(x^{*},y)\,|y-x^{*}|^\alpha\,dy
\le C_0\int_0^1 G_t(x^{*},y)\,|y-x^{*}|^\alpha\,dy \nonumber \\
&\le C_{T,0} \int_{\mathbb R} p_t(z)\,|z|^\alpha\,dz = C_{T,0} t^{\alpha/2}\int_{\mathbb R}\frac{1}{\sqrt{4\pi}}e^{-u^2/4}|u|^\alpha\,du
=: C_{\alpha,T,0}\,t^{\alpha/2},  \label{eq:D1-near}
\end{align}
where we have used Lemma \ref{lem:Neumann-Gaussian-|x-y|}. In the calculations above, we use the subscript 0 in the constants to denote the dependence on $C_0$ from Assumption \ref{ass:uo-holder}.

For the far term, use $0\le u_0(x^{*})-u_0(y)\le 2\|u_0\|_\infty$ to obtain
\begin{align}
I_{\mathrm{far}}(t)
&\le 2\|u_0\|_\infty \int_{|y-x^{*}|>r_0} G_t(x^{*},y)\,dy
\le 2C_T \|u_0\|_\infty \int_{|z|>r_0} p_t(z)\,dz \le C_{T,r_0} e^{-c/t}, \label{eq:D1-far}
\end{align}
where we have used again Lemma \ref{lem:Neumann-Gaussian-|x-y|} and $c$ can be taken to be $r_0^2 / 4$.

The last term in \eqref{eq:D1-split} is nonzero only for Regime \ref{:dirichlet}, i.e. for the Dirichlet kernel. We set
\[
d:=\min\{x^{*},\,1-x^{*}\} \geq 0.
\]
Let $\tau$ be the hitting time of the set $\{0,1\}$ of a Brownian motion $\{B_t\}$. Using the probabilistic representation,
\[
\int_0^1 G_t^{\operatorname{D}}(x^{*},y)\,dy=\mathbb{P}_{x^{*}}(\tau>t),
\qquad
1-\int_0^1 G_t^{\operatorname{D}}(x^{*},y)\,dy=\mathbb{P}_{x^{*}}(\tau\le t).
\]
If \(\tau\le t\), then the Brownian motion has moved at least distance \(d\) from its starting point by time \(t\), hence
\begin{equation} \label{kernelbound}
\mathbb{P}_{x^{*}}(\tau\le t)
\le
\mathbb{P}_0\!\left(\sup_{0\le s\le t}|B_s|\ge d\right)
\le
2\,\mathbb{P}_0\!\left(\sup_{0\le s\le t} B_s \ge d\right) = 4 \Big(1-\Phi\!\big(\tfrac{d}{\sqrt{t}}\big)\Big) \le \frac{4}{\sqrt{2\pi}}\frac{\sqrt{t}}{d}\exp\!\left(-\frac{d^2}{2t}\right),
\end{equation}
where the equality follows by the reflection principle and the last estimate from Gaussian tail bound \(1-\Phi(a)\le \frac{1}{a\sqrt{2\pi}}e^{-a^2/2}\) for \(a>0\), with \(a=d/\sqrt{t}\). Here $\Phi$ denotes the CDF of a standard normal random variable.

Combining \eqref{eq:D1-near}, \eqref{eq:D1-far}, and \eqref{kernelbound}, we conclude that
\begin{equation}\label{eq:D1-final}
|D_1(t)|
\le C_{\alpha,T,0}\,t^{\alpha/2} + C_{T,r_0} e^{-c/t} + C_{x^*}\sqrt{t},
\qquad t\in(0,t_0],
\end{equation}
for small enough $t_0\in(0,1]$. In particular, since $\alpha>1/2$, as $ t\downarrow 0$,
\begin{equation}\label{eq:D1-o}
\frac{|D_1(t)|}{t^{1/4}}\xrightarrow{}0.
\end{equation}

Now we bound the drift term $D_2(t)$ in $L^2(\Omega)$. By Minkowski's integral inequality and nonnegativity of~$G$,
\begin{align}
\E\left(D_2(t)^2 \right)^{\frac{1}{2}}
&\le \int_0^t \int_0^1  \E\left( \left(G_{t-s}(x^{*},y)\,b(u(s,y))\, \right)^2 \right)^{\frac{1}{2}} dy ds \notag\\
&\le \int_0^t \int_0^1 G_{t-s}(x^{*},y)\,\E\left((b(u(s,y)))^2\right)^{\frac{1}{2}}\,dy\,ds \nonumber \\
&\le C\int_0^t \int_0^1 G_{t-s}(x^{*},y) \left(  |b(0)| + \Lip(b) \E \left((u(s,y))^2\right)^{\frac{1}{2}} \right) dy\,ds,
\label{eq:D2-minkowski}
\end{align}
where we used in the last line the Lipschitz growth of $b$ we have $|b(z)|\le |b(0)|+\Lip(b)|z|$. A standard argument for the stochastic heat equation with Lipschitz coefficients gives us
\begin{equation} \label{eq:sup-bound-E-u}
\sup_{s\in[0,T]}\sup_{y\in[0,1]}\E\left((u(s,y))^2 \right)^{\frac{1}{2}}\le C_T.
\end{equation}
Plugging this into \eqref{eq:D2-minkowski} and using $\int_0^1 G_{t-s}(x^{*},y)\,dy\le 1$,
\begin{equation}\label{eq:D2-final}
\E\left(D_2(t)^2 \right)^{\frac{1}{2}}
\le \int_0^t C_{T,b} \left(\int_0^1 G_{t-s}(x^{*},y)\,dy\right)\!ds
\le C_{T,b}\,t.
\end{equation}
Combining \eqref{eq:D1-final} and \eqref{eq:D2-final}, since $D_1(t)$ is deterministic, yields
\begin{equation}\label{eq:D-L2-bound}
\E\left(D(t)^2\right)^{\frac{1}{2}} \le |D_1(t)| + \E \left( D_2(t)^2 \right)^{\frac{1}{2}} \le C_{\alpha,T,0}\,t^{\alpha/2} + C_{T,r_0} e^{-c/t} + C_{x^*}\sqrt{t}+ C_{T,b}t,
\qquad t\in(0,t_0].
\end{equation}

Fix $c_1>0$. By Chebyshev's inequality and \eqref{eq:D-L2-bound} since $\alpha>\frac{1}{2}$ and as $t \downarrow 0$,
\begin{align}
\PP\Big(|D(t)|\ge \tfrac14 c_1 t^{1/4}\Big)
&\le \frac{\E \left(D(t)^2\right)}{(c_1 t^{1/4})^2}
\rightarrow 0, \quad \text{ hence }\quad \PP\Big(|D(t)|\ge \tfrac14 c_1 t^{1/4}\Big)\le \frac{c_2}{4},
\label{eq:D-prob-small}
\end{align}
for the prescribed $c_2\in(0,1),$ after possibly shrinking $t_0$.

We proceed with estimating the term $R(t)$. By It\^o isometry and Lipschitz continuity of $\sigma$,
\begin{align}
 \mathbb{E}|R(t)|^2
&\le
\Lip(\sigma)^2
\int_0^t \int_0^1
G_{t-s}(x^{*},y)^2
\,
\mathbb{E}|u(s,y)-u_0(y)|^2
\,dy\,ds \nonumber \\
&\le
\Lip(\sigma)^2
\int_0^t \int_{|y-x^{*}|<\frac{r_0}{2}}
G_{t-s}(x^{*},y)^2
\,
\mathbb{E}|u(s,y)-u_0(y)|^2
\,dy\,ds \nonumber \\
&\qquad+
\Lip(\sigma)^2
\int_0^t \int_{|y-x^{*}|\geq \frac{r_0}{2}}
G_{t-s}(x^{*},y)^2
\,
\mathbb{E}|u(s,y)-u_0(y)|^2
\,dy\,ds \label{eq:Rt-123}
\end{align}
For the last term in \eqref{eq:Rt-123}, 
\begin{align}
    \int_0^t \int_{|y-x^{*}|\geq \frac{r_0}{2}}
G_{t-s}(x^{*},y)^2
\,
\mathbb{E}|u(s,y)-u_0(y)|^2
\,dy\,ds &\le  C_T \int_0^t \int_0^1 \frac{1}{t-s} \exp \left( -2\frac{r_0^2}{t-s}  \right)  dy ds \nonumber \\
&\le C_T \int_0^t \frac{1}{s} \exp \left( -\frac{r_0^2}{s}  \right) ds \nonumber \\
&\le C_T \exp\left( - r_0^2 / t \right) \frac{t}{r_0^2} \nonumber \\
&\le C_{T,r_0 } t \exp\left( - r_0^2 / t \right), \label{eq:Rt-far} 
\end{align}
where the first inequality follows by using \eqref{eq:sup-bound-E-u} and extending the $dy$ integration to $[0,1]$. 

For $|y-x^{*}|< \frac{r_0}{2}$ we have that $\mathbb{E}|u(s,y)-u_0(y)|^2 \leq C_T s^{1/2}$. Indeed, we can decompose it as
\begin{equation} \label{eq:Rt-100}
\begin{split}
u(s,y)-u_0(y)&=\int_{|z-y|<\frac{r_0}{2}} G_{s}(y,z)u_0(z)dz -u_0(y)+ \int_{|z-y| \geq \frac{r_0}{2}} G_{s}(y,z)u_0(z)dz \\
&+\int_0^s \int_0^1 G_{s-r}(y,z)b(u(r,z))dz dr 
+\int_0^s \int_0^1 G_{s-r}(y,z)\sigma(u(r,z)) dW(r,z).
\end{split}
\end{equation}
Then, for the stochastic term
\begin{equation*}
\begin{split}
   \mathbb{E} \left(\int_0^s \int_0^1 G_{s-r}(y,z)\sigma(u(r,z)) dW(r,z) \right)^2 &= \mathbb{E} \left(\int_0^s \int_0^1 G_{s-r}^2(y,z)\sigma^2(u(r,z)) dr dz \right) \\
   &\leq C_\sigma \int_0^s \int_0^1 G_{s-r}^2(y,z) dr dz  \leq C_{T,\sigma} \sqrt{s}. 
\end{split}
\end{equation*}
The other terms in \eqref{eq:Rt-100} can be estimated by calculations identical to the ones for the terms $D_1$ and $D_2$ above, verifying that $\mathbb{E}|u(s,y)-u_0(y)|^2 \leq C_{T,\sigma,b} s^{1/2}$.

Plugging this in the term in the second line of \eqref{eq:Rt-123}, 
\begin{equation} \label{eq:Rt-near}
\begin{split}
&\int_0^t \int_{|y-x^{*}|<r_0}
G_{t-s}(x^{*},y)^2
\,
\mathbb{E}|u(s,y)-u_0(y)|^2
\,dy\,ds \\
&\leq C_{T,\sigma,b} \int_0^t \int_{|y-x^{*}|<r_0}
G_{t-s}(x^{*},y)^2 s^{1/2}
\,dy\,ds \leq C_{T,\sigma,b} \int_0^t s^{1/2} \int_0^1
G_{t-s}(x^{*},y)^2 
\,dy  \,ds \\
&\leq C_{T,\sigma,b} \int_0^t (t-s)^{-1/2} s^{1/2} \,ds= C_{T,\sigma,b} t.
\end{split}
\end{equation}
The last line uses Lemma \ref{lem:G-L2} and standard beta function calculations (cf. \eqref{eq:beta-example-2}). Hence, combining \eqref{eq:Rt-123}, \eqref{eq:Rt-far}, and \eqref{eq:Rt-near}, we get that
\begin{equation*}
     \mathbb{E}|R(t)|^2 \le C_{T,r_0,b,\sigma} \sqrt{t.}
\end{equation*}

Hence, by Chebyshev, for the same constant $c_1$ as in \eqref{eq:zt-lower-bound},
\begin{equation} \label{eq:R-estimate-prob}
\mathbb{P}
\big(
|R(t)| \ge \tfrac14 c_1 t^{1/4}
\big)
\leq C t^{1/2} \rightarrow 0, \text{ hence } \PP\Big(|R(t)|\ge \tfrac14 c_1 t^{1/4}\Big)\le \frac{c_2}{4},
\text{ for all  $t \leq t_0$ for some $t_0>0$.}
\end{equation}

We can conclude that for $t$ sufficiently small,
\[
\mathbb{P}
\Big(
Z(t)\ge c_1 t^{1/4},\,
|R(t)|\le\tfrac14 c_1 t^{1/4},\,
|D(t)|\le\tfrac14 c_1 t^{1/4}
\Big)
\ge \frac{c_2}{2}>0.
\]
Thus, from \eqref{eq:u-decomp-D-Z-R}, there exist $p_*>0$ and $t_0>0$ such that
\begin{equation*} 
\mathbb{P}
\big(
u(t,x^{*}) > u_0(x^{*})
\big)
\ge p_*,
\qquad
\forall t\in(0,t_0].
\end{equation*}
We have that, recalling the set $A$ defined in \eqref{eq:A-def},
\begin{align}
    \PP(A) &:= \PP(\left\{
u(t_n,x^{*}) > u_0(x^{*})
\text{ i.o.}
\right\})=\PP( \limsup \left\{u(t_n,x^{*}) > u_0(x^{*}) \right\}) \nonumber \\
&\geq \limsup_{n \rightarrow \infty} \PP( u(t_n, x^{*})> u_0(x^{*}))\geq p_{*} . \label{eq:prob-i.o.-event}
\end{align}
Hence, by the 0-1 law for the germ $\sigma$-algebra we obtain that $\PP(A)=1$ and thus \[
\mathbb{P}
\left(
\sup_{t\in(0,T],\,x\in[0,1]} u(t,x) > u_0(x^*)
\right)
= 1,
\]
which finishes the proof.
\end{proof}



\begin{proposition} \label{dirichletmaximizeratboundary}
Suppose Regime \ref{:dirichlet} holds. Then, if $x^* \in \{0,1\}$, we have that 
    \[
    \PP \left( \sup_{(t,x) \in [0,T] \times [0,1]} u(t,x) = 0 \right) = 0.
    \]
\end{proposition}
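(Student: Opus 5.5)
Since $x^*\in\{0,1\}$ is a maximizer of $u_0$ and $u_0(0)=u_0(1)=0$ in Regime \ref{:dirichlet}, we have $u_0\le 0$ on $[0,1]$. It therefore suffices to show that, almost surely, $u$ takes a strictly positive value somewhere in $(0,T]\times(0,1)$. We follow the scheme of Proposition \ref{dirichletneumannmaximizeratinitial}, but evaluate at a point moving toward the boundary rather than at the (degenerate) boundary point itself. Fix $\theta\in(0,\tfrac12)$ and set $x(t):=t^\theta$, as in Lemma \ref{lem:G-L2}\ref{lem:L2lower}. Take a deterministic sequence $t_n\downarrow0$ and the event $A:=\{u(t_n,x(t_n))>0 \text{ i.o.}\}$. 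The variable $u(t_n,x(t_n))$ is measurable with respect to the noise on $[0,t_n]\times[0,1]$, so $A\in\mathcal F_{0+}$. By the 0--1 law, $\PP(A)\in\{0,1\}$, and by the reverse Fatou argument of \eqref{eq:prob-i.o.-event} it suffices to find $p_*>0$ with $\PP(u(t,x(t))>0)\ge p_*$ for all small $t$. On $A$ the supremum is strictly positive, which gives the claim.

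Decompose $u(t,x(t))=D_1(t)+D_2(t)+Z(t)+R(t)$, where
\[
D_1(t)=\int_0^1G^{\operatorname D}_t(x(t),y)u_0(y)\,dy,
\]
$D_2$ is the drift term, $Z(t)=\int_0^t\int_0^1G^{\operatorname D}_{t-s}(x(t),y)\sigma(u_0(y))W(ds,dy)$, and $R$ is the remainder stochastic integral with $\sigma(u(s,y))-\sigma(u_0(y))$. The Gaussian term $Z$ has variance at least $(C_\sigma)^{-2}\int_0^t\int_0^1 G^{\operatorname D}_r(x(t),y)^2\,dy\,dr\ge c\,t^{1/2}$. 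This follows from Lemma \ref{lem:G-L2}\ref{lem:L2lower}, which applies precisely because $x(t)\ge 2\sqrt t$ stays in the nondegenerate region. Hence $\PP(Z(t)\ge c_1t^{1/4})\ge c_2$. The drift satisfies $\E(D_2^2)^{1/2}\le C t$ exactly as in \eqref{eq:D2-final}.

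For $D_1$, the sign problem vanishes, since we only need a lower bound on the total and $D_1$ is small. Write $u_0(y)=u_0(y)-u_0(x^*)$. Assumption \ref{ass:uo-holder} gives $|u_0(y)|\le C_0|y-x^*|^\alpha$ near $x^*=0$ (say). Using \eqref{eq:G-lower-upper-T},
\[
|D_1(t)|\le C\int p_t(x(t)-y)\,|y|^\alpha\,dy+Ce^{-c/t}\le C\big(x(t)^\alpha+t^{\alpha/2}\big).
\]
This is where the main obstacle lies: $x(t)^\alpha=t^{\theta\alpha}$ must be $o(t^{1/4})$, i.e.\ $\theta\alpha>1/4$. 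Since $\alpha>1/2$, we can choose $\theta\in(\tfrac1{4\alpha},\tfrac12)$, a nonempty interval. This is exactly why $\alpha>\tfrac12$ is needed, and it is compatible with the requirement $\theta<1/2$ of Lemma \ref{lem:G-L2}\ref{lem:L2lower}.

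For $R$, use the It\^o isometry as in \eqref{eq:Rt-123}--\eqref{eq:Rt-near}. The bound $\E|u(s,y)-u_0(y)|^2\le Cs^{1/2}$ holds for $y$ near the boundary as well, since the $D_1$-type term there is bounded via the H\"older estimate by $Cs^{\alpha/2}$ plus the Brownian exit-probability term. Combined with Lemma \ref{lem:G-L2}(ii), this gives $\E R(t)^2\le Ct=o(t^{1/2})$. Chebyshev then makes $|D(t)|,|R(t)|\le\tfrac14c_1t^{1/4}$ with probability at least $1-c_2/2$ for small $t$. Intersecting with the $Z$ event yields $\PP(u(t,x(t))>0)\ge c_2/2=:p_*$, and the 0--1 law concludes. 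The case $x^*=1$ is symmetric, with $x(t)=1-t^\theta$.
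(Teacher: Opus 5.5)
Your proposal is correct and follows essentially the same route as the paper. Both use the moving point $x(t)=t^\theta$ with $\theta\in(\frac{1}{4\alpha},\frac12)$, the decomposition into $D_1,D_2,Z,R$ with the Gaussian lower bound on $\mathrm{Var}(Z(t))$ from Lemma \ref{lem:G-L2}\ref{lem:L2lower}, and the germ $\sigma$-field 0--1 law with the reverse Fatou step. The differences are cosmetic: you bound $D_1$ via $|y|^\alpha\le x(t)^\alpha+|y-x(t)|^\alpha$ and a Gaussian moment instead of splitting the integral at $2x(t)$, and you spell out the near-boundary estimate of $\E|u(s,y)-u_0(y)|^2$ for $R$, which the paper only calls analogous.
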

\begin{proof}
Assume that without loss of generality, $x^{*}=0$, hence $u_0$ is one-sided H\"older at $0$ with exponent $\alpha>1/2$ (cf. \eqref{eq:local-holder} with the one sided bound).
We choose a moving interior point $x(t)=t^{\theta}$ and choose $\theta$ such that
$\frac{1}{4\alpha}<\theta<\frac12.$
Then, since $\alpha\theta>1/4$, as $t \to 0$,
\begin{equation}\label{eq:u0xt-small}
    |u_0(x(t))|\le C_0 x(t)^\alpha=C_0 t^{\alpha\theta}=o(t^{1/4}), \quad \frac{x(t)}{\sqrt t}=t^{\theta-\frac12}\xrightarrow{}\infty, \quad \text{as }t\downarrow0.
\end{equation}

For each $t>0$ we write, at the point $x(t)$,
\begin{equation}\label{eq:main-decomp}
u(t,x(t))= D_1(t)+D_2(t)+Z(t)+R(t),
\end{equation}
where the terms $D_1(t),D_2(t),Z(t),R(t)$ are given in \eqref{eq:D-Z-R-def}--\eqref{eq:D-def} with $G^{\operatorname{D}}$ replacing $G$ and $x(t)$ replacing $x^*$.

Estimates for the terms $D_2(t)$ and $R(t)$ are analogous to the ones in Proposition~\ref{dirichletneumannmaximizeratinitial} (cf.\ \eqref{eq:D2-final} and \eqref{eq:R-estimate-prob}, with $x(t)$ in place of $x^*$). The estimate for $D_1(t)$ requires a separate argument, since here the evaluation point $x(t)$ is moving and is not itself a maximizer of $u_0$.

Since $x^*=0$ is a maximizer and $u_0(0)=0$, we have $u_0(y)\le 0$ for all $y\in[0,1]$. Hence
\begin{equation}\label{eq:D1-moving-start}
|D_1(t)|
\le
\left|\int_0^1 G_t^{\operatorname{D}}(x(t),y)u_0(y)\,dy\right|
+
|u_0(x(t))|.
\end{equation}
The second term is already controlled by \eqref{eq:u0xt-small}.
We now estimate the integral term in \eqref{eq:D1-moving-start}. Split
\[
\int_0^1 G_t^{\operatorname{D}}(x(t),y)|u_0(y)|\,dy
=
\int_0^{2x(t)} G_t^{\operatorname{D}}(x(t),y)|u_0(y)|\,dy
+
\int_{2x(t)}^{1} G_t^{\operatorname{D}}(x(t),y)|u_0(y)|\,dy
=: I_{\rm near}(t)+I_{\rm far}(t).
\]

For the near term, since $0\le y\le 2x(t)$ and $u_0$ is one-sided H\"older at $0$ with exponent $\alpha$, we have
\[
|u_0(y)|=|u_0(y)-u_0(0)|\le C_0 y^\alpha \le C\,x(t)^\alpha.
\]
Therefore, using that $\int_0^1 G_t^{\operatorname{D}}(x(t),y)\,dy\le 1$,
\begin{equation}\label{eq:D1-moving-near}
I_{\rm near}(t)
\le
C\,x(t)^\alpha \int_0^1 G_t^{\operatorname{D}}(x(t),y)\,dy
\le
C\,x(t)^\alpha
=
C\,t^{\alpha\theta}
=
o(t^{1/4}).
\end{equation}

For the far term, using the upper bound from Lemma~\ref{lem:Neumann-Gaussian-|x-y|},
\begin{multline}
   I_{\rm far}(t)
\le
\|u_0\|_\infty \int_{2x(t)}^1 G_t^{\operatorname{D}}(x(t),y)\,dy
\le
C_T \|u_0\|_\infty \int_{2x(t)}^1 p_t(x(t)-y)\,dy \\
\le C_T \int_{x(t)}^\infty p_t(z)\,dz
\le
C_T \exp\!\left(-c\frac{x(t)^2}{t}\right)
=
C_T \exp\!\left(-c\,t^{2\theta-1}\right), \label{eq:D1-moving-far}
\end{multline}
for some $c>0$. The second line follows since $y\ge 2x(t)$ implies $|y-x(t)|\ge x(t)$
 Because $\theta<1/2$, we have $x(t)/\sqrt t=t^{\theta-1/2}\to\infty$, and thus $\exp\!\left(-c\frac{x(t)^2}{t}\right)=o(t^{1/4})$.

Combining \eqref{eq:D1-moving-start}, \eqref{eq:D1-moving-near}, and \eqref{eq:D1-moving-far}, we conclude that, as $t\downarrow0$,
\begin{equation}\label{eq:D1-moving-final}
|D_1(t)|=o(t^{1/4}).
\end{equation}


It now remains to bound the term $Z(t)$, which is a centered Gaussian since the integrand is deterministic. By the uniform ellipticity of $\sigma$ and Lemma \ref{lem:G-L2},
\[
\mathrm{Var}(Z(t))
\ge (C_\sigma)^{-2} \int_0^t\int_0^1 G_r^{\operatorname{D}}(x(t),y)^2\,dy\,dr 
\ge (C_\sigma)^{-2} \int_0^t c r^{-1/2}\,dr
= c (C_\sigma)^{-2}\, t^{1/2}.
\]
Hence there exist constants $c_1,c_2>0$ and $t_0\in(0,1)$ such that
for all $t\in(0,t_0]$,
\begin{equation}\label{eq:Z-tail}
\mathbb P\big(Z(t)\ge c_1 t^{1/4}\big)\ge c_2.
\end{equation}
We can now get the desired lower bound for $\mathbb P(u(t,x(t))>0)$.

Fix $c_1,c_2$ from \eqref{eq:Z-tail}. Choose $t_0>0$ small so that, for $t \in (0,t_0]$,:
$$
|u_0(x(t))|\le \frac{1}{16} c_1 t^{1/4}, \;
|D_1(t)|\le \frac{1}{16} c_1 t^{1/4}, \;
\mathbb P\big(|D_2(t)|>\tfrac14 c_1 t^{1/4}\big)\le \frac{c_2}{4},  \;
\mathbb P\big(|R(t)|>\tfrac14 c_1 t^{1/4}\big)\le \frac{c_2}{4}.
$$

Define the event
\[
E_t:=\Big\{Z(t)\ge c_1 t^{1/4}\Big\}
\cap \Big\{|D_2(t)|\le \tfrac14 c_1 t^{1/4}\Big\}
\cap \Big\{|R(t)|\le \tfrac14 c_1 t^{1/4}\Big\}.
\]
On $E_t$, using \eqref{eq:main-decomp} and the deterministic bounds on $u_0(x(t))$ and $D_1(t)$,
\[
u(t,x(t))
\ge -\frac{1}{16} c_1 t^{1/4}-\frac{1}{16} c_1 t^{1/4}-\frac14 c_1 t^{1/4}
+ c_1 t^{1/4}-\frac14 c_1 t^{1/4}= \frac{1}{4}c_1 t^{\frac{1}{4}}>0.
\]
Moreover,
\begin{align*}
\mathbb P(E_t)
&\ge \mathbb P\big(Z(t)\ge c_1 t^{1/4}\big)
-\mathbb P\big(|D_2(t)|>\tfrac14 c_1 t^{1/4}\big)
-\mathbb P\big(|R(t)|>\tfrac14 c_1 t^{1/4}\big)\\
&\ge c_2-\frac{c_2}{4}-\frac{c_2}{4}=\frac{c_2}{2}.
\end{align*}
Thus we obtain the uniform lower bound
\begin{equation*}
\exists\,p_\star>0\ \text{such that}\ \mathbb P\big(u(t,x(t))>0\big)\ge p_\star
\quad\forall\,t\in(0,t_0],
\qquad p_\star:=\frac{c_2}{2}.
\end{equation*}
Once more, 
$$
\PP(A) := \PP(\left\{
u(t_n,x(t_n))> 0
\text{ i.o.}
\right\})=\PP( \limsup \left\{u(t_n,x(t_n))> 0 \right\}) \geq \limsup_{n \rightarrow \infty} \PP( \left\{u(t_n, x(t_n))>0 \right\})\geq p_{*}>0.
$$
Hence, by the 0-1 law for the germ $\sigma$-algebra we obtain that $\PP(A)=1$ and thus $\mathbb{P}
\left(
\sup_{t\in(0,T],\,x\in[0,1]} u(t,x) > 0
\right)
= 1$.
\end{proof}




\section{Proof of the theorem for $\kappa > 0$} \label{sec:kappa-neq-0}

In this section we treat the proof of Theorem \ref{thm:4th-order} under Regime \ref{case:fourthorder}, i.e., with $\kappa > 0$. In that case we take $\kappa=1 $ and $ \rho \ge 0$ in \eqref{sCH} without loss of generality.

The proof of Theorem \ref{thm:4th-order} parallels the proof of Theorem \ref{maintheorem} under Regime \ref{case:neumann}, but we use the alternative lemmas that were developed in Section \ref{subsec:kappa>0}. We outline the proof here, but emphasize the differences in the proof in detail.

We have to prove the conditions in Theorem \ref{thm:nualart-criterion-existence}. Condition (i) follows as in Section \ref{sec:proof-kappa=0} by splitting the solution to $u$ to its deterministic and stochastic components and bounding these separately by using the continuity of $u_0$ and an application of Kolmogorov's criterion respectively. In section \ref{subsec:second-cond-fourth} we prove condition (ii) by means of a Kolmogorov theorem, thereby obtaining that $\sup_{(t,x) \in [0,T] \times [0,1]} u(t,x) \in \mathbb{D}^{1,2}$. Condition (iii) is the subject of Section \ref{subsec:third-cond-fourth}. In particular, Condition (iii) for all compact sets $K \subseteq (0,T] \times [0,1]$ is shown in Section \ref{subsubsec:compact-subsets-fourth}. When taking $K = [0,T] \times [0,1]$, Condition (iii) is proved in Section \ref{subsubsec:upgrading-whole-spacek-k>0}.

\subsection{ Proof of the second Condition} \label{subsec:second-cond-fourth}
For the second condition we proceed exactly as in Section \ref{sec:proof-kappa=0}, obtainig the terms $I_1,I_2, I_3$, with the Green's function $H$ replacing $G$. We use Lemma \ref{greenincrementsintegral} to bound the terms $I_1, I_2$ in the same way as we did in Section \ref{sec:proof-kappa=0}. For any $0<\alpha< \frac{3}{4}$,
\begin{eqnarray*}
I_1&=&\Big(\int_{0}^{T} \int_0^1 |H_{t-s}(x,y)-H_{\bar{t}-s}(\bar{x},y)|^{2} dy ds \Big)^{\frac{p}{2}} \leq C_{\gamma, p, \rho} \left( |\bar{x}-x|^{2}+|\bar{t}-t|^{\alpha} \right)^{\frac{p}{2}} ,
\end{eqnarray*}
and
\begin{align*}
I_2 & = \E\left[\left( \int_0^T \int_0^1 \left(\int_s^t\int_0^1 \left( H_{t-\theta}(x,r)-H_{\bar{t}-\theta}( \bar{x},r)\right) m(r,\theta) D_{s,y}u(\theta,r) drd \theta \right)^2 dy ds \right)^{\frac{p}{2}}\right] \nonumber \\
& \leq C_{b,\alpha,T,p,\rho} \left(|t-\bar{t}|^{\alpha}+|x-\bar{x}|^{2}\right)^{\frac{p}{2}}\E\left[\left( \int_{0}^{T} \int_{0}^{1} \int_{s}^{t}\int_{0}^{1}|D_{s,y}u(\theta,r)|^2 drd \theta dy ds \right)^{\frac{p}{2}}\right]. 
\end{align*}
Using Minkowski integral inequality for $p\ge 2$ and Lemma \ref{lmalden} 
\begin{align}
\mathbb{E}\left[\left( \int_0^T \int_0^1 \int_s^t\int_0^1|D_{s,y}u(\theta,r)|^2 drd \theta dy ds \right)^{\frac{p}{2}}\right]  \nonumber
& \leq C_{p,T} \left(\int_0^T \int_0^1 \int_s^t \left(\frac{1}{\theta-s}\right)^{1/2} d \theta dy ds \right)^{\frac{p}{2}} \nonumber \leq C_{p,T}. \label{eq:i2-3}
\end{align}

So we can conclude that 
\begin{equation*}
I_2 \leq C_T\left(|t-\bar{t}|^{\alpha}+|x-\bar{x}|^{2}\right)^{\frac{p}{2}} .
\end{equation*}

For the term $I_{3}$ as in Section \ref{sec:proof-kappa=0} we apply Minkowski integral inequality, the Burkholder-Davis-Gundy inequality respectively with $p \geq 2$ and use Lemmas \ref{lmalden} and \ref{lemmagdif} to obtain
$$
\begin{aligned}
& I_{3}=\E
\left(\left[\int_0^{t} \int_0^1 
\left(\int_s^{t}\int_0^1 \left(H_{t-\theta}(x,r)-H_{\bar{t}-\theta}(\bar{x},r)\right) \, \hat{m}(r,\theta) \, D_{s,y}u(\theta,r) \, W(dr,d\theta)\right)^2 dy ds\right]^{\frac{p}{2}}\right)
\\
& \le C_{T,p,\sigma} \left[\int_0^t \int_0^1 \int_s^{t}\int_0^1
\left( |H_{t-\theta}(x,r)-H_{\bar t-\theta}(\bar x,r)|^p \, \E\left[\big|D_{s,y} u(\theta,r)\big|^p\right] \right)^{\frac{2}{p}}dr\,d\theta dy ds \right]^{\frac{p}{2}}\\
& \le C_{T,p,\sigma} \left[\int_0^t \int_0^1 \int_s^{t}\int_0^1
\left(H_{t-\theta}(x,r)-H_{\bar t-\theta}(\bar x,r)\right)^2 \frac{1}{(\theta-s)^{\frac{1}{2}}}\,dr\,d\theta dy ds \right]^{\frac{p}{2}} \\
&\leq C_{T,p,\sigma\rho} \left[\int_0^t \int_0^1 \frac{\left(|t-\bar{t}|^{\alpha}+|x-\bar{x}|^2 \right)}{\sqrt{t-s}} dy ds \right]^{\frac{p}{2}} \\
&\leq C_{T,p,\sigma,\rho} \left(|t-\bar{t}|^{\alpha}+|x-\bar{x}|^{2}\right)^{\frac{p}{2}} .
\end{aligned}
$$

Combining those bounds for $I_1,I_2,I_3$ we obtain that for any $0<\alpha < \frac{3}{4}$
\begin{equation} \label{eq:malliavin-difference-L2-4thorder}
 \mathbb{E}\left[\left( \int_{0}^{T} \int_{0}^{1} |D_{s,y} u(t,x)- D_{s,y}u(\bar{t}, \bar{x})|^2 dy ds  \right)^{\frac{p}{2}}\right] \leq C_{T,p,b,\sigma,\alpha,\rho} \left(|x-\bar{x}| +|t-\bar{t}|^{\frac{\alpha}{2}}\right)^{p}.
\end{equation}

Since this holds for any $\alpha < \frac{3}{4}$ it thus suffices to choose $p> 1+\frac{2}{\alpha}> 1+\frac{8}{3}=\frac{11}{3}$ to obtain the desired Kolmogorov criterion estimate. We can conclude that the process $\left\{D_{s,y} u(t,x), t \in (s,T), x \in (0,1)\right\}$ possesses a continuous version as an $L^{2}(dy \, ds)$-valued process. 
We also have, after fixing some $(t_0,x_0) \in [0,T] \times [0,1]$, and using Minkowski integral inequality and Lemma \ref{lmalden} 
\begin{equation*}
\E \|D_{\cdot,\cdot} u(t_0,x_0)\|^p_{L^2([0,T] \times [0,1])} = \E \left( \int_0^T \int_0^1 | D_{s,y} u(t_0,x_0) |^2 dy ds \right)^{p/2} 
\le C \left( \int_0^{t_0} \int_0^1 \frac{1}{(t_0-s)^{\frac{1}{2}}} dy ds   \right)^{p/2} \leq C_{t_0}.
\end{equation*}
We can thus conclude that, by Theorem \ref{thm:kolmogorov}
\begin{equation} 
    \E \left( \sup_{(t,x) \in [0,T] \times [0,1]} \int_0^T \int_0^1 | D_{s,y} u(t,x)|^2 dy ds \right) < \infty,
\end{equation}
and thus the second condition for the case $\kappa >  0$ is proved.

\subsection{Proof of the third condition} \label{subsec:third-cond-fourth} For the proof of the third condition of Theorem \ref{thm:nualart-criterion-existence} we follow the same steps as in Section \ref{sec:3rdcondk1}. In Sections \ref{subsubsec:holder-gamma-fourth} and \ref{subsubsec:small-ball-gamma-fourth} we develop some needed preliminary lemmas, while we prove the condition for compact sets $K \subset (0,T] \times [0,1]$ in Section \ref{subsubsec:compact-subsets-fourth}. We upgrade this to $K = [0,T] \times [0,1]$ in Section \ref{subsubsec:upgrading-whole-spacek-k>0}.

\subsubsection{H\"older continuity of $\gamma(t,x)$} \label{subsubsec:holder-gamma-fourth}
Proceeding as in \ref{lemma-gamma-holder} we can show that for $\kappa > 0$, $\gamma(t,x)=\int_0^T \int_0^1 |D_{s,y}u(t,x)|^2\,dy\,ds$ is $(\mu_1,\mu_2)-$H\"older continuous in $(t,x)$ for all $\mu_1 < \frac{3}{8}$ and all $\mu_2< 1$.

Indeed we can write 

\begin{multline*}
\E\Bigg|
\int_0^T\!\!\int_0^1\left(|D_{s,y}u(t,x)|^2-|D_{s,y}u(\bar t,\bar x)|^2\right)\,dy\,ds
\Bigg|^p
\le
\Bigg(\E\left(\int_0^T\!\!\int_0^1 J_1^2\,dy\,ds\right)^p\Bigg)^{1/2}
\Bigg(\E\left(\int_0^T\!\!\int_0^1 J_2^2\,dy\,ds\right)^p\Bigg)^{1/2},
\end{multline*}
where $J_1 := D_{s,y}u(t,x) + D_{s,y}u(\bar t,\bar x)$, $J_2 := D_{s,y}u(t,x) - D_{s,y}u(\bar t,\bar x)$.
For the $J_2$ term, we get for $0 < \alpha < 3/4$
\begin{equation*}
\begin{split}
&\Bigg(\E\left(\int_0^T\!\!\int_0^1 J_2^2\,dy\,ds\right)^p\Bigg)^{1/2} = \mathbb{E}\left[\left( \int_{0}^{T} \int_{0}^{1} |D_{s,y} u(t,x)- D_{s,y}u(\bar{t}, \bar{x})|^2 dy ds  \right)^{p}\right]^{1/2}  \\
&\le \left( C_{T,p,b,\sigma,\alpha,\rho} \left(|x-\bar{x}|+|t-\bar{t}|^{\alpha/2}\right)^{2p} \right)^{1/2} \le C_{T,p,b,\sigma,\alpha,\rho} \left(|x-\bar{x}|+|t-\bar{t}|^{\alpha/2}\right)^{p}.
\end{split}
\end{equation*}
Moreover, applying Minkowski integral inequality and Lemma \ref{lmalden},
\begin{flalign*} 
\Bigg(\E\left(\int_0^T\!\!\int_0^1 J_1^2\,dy\,ds\right)^p\Bigg)^{1/2} &= \mathbb{E}\left[\left( \int_{0}^{T} \int_{0}^{1} |D_{s,y} u(t,x) + D_{s,y}u(\bar{t}, \bar{x})|^2 dy ds  \right)^{p}\right]^{1/2}
\nonumber\\
&\le C_p \left(  \int_0^T \int_0^1  \left( (\E | D_{s,y} u(t,x)|^{2p})^{1/p} + (\E | D_{s,y} u(\bar{t}, \bar{x})|^{2p})^{1/p}  \right) dy ds     \right)^{p/2}
\nonumber\\
&\le C_{p,T} \left(  \int_0^t \int_0^1  \frac{1}{(t-s)^{\frac{1}{2}}} dy ds + \int_0^{\bar{t}} \int_0^1 \frac{1}{(\bar{t}-s)^{\frac{1}{2}}} dy ds     \right)^{p/2} \le C_{p,T}.
\end{flalign*}
We can thus conclude 
\begin{equation*}
 \E\Bigg|
\int_0^T\!\!\int_0^1\left(|D_{s,y}u(t,x)|^2-|D_{s,y}u(\bar t,\bar x)|^2\right)\,dy\,ds
\Bigg|^{p} \
\le C_{T,p,b,\sigma,\alpha,\rho}\left(|x-\bar{x}| +|t-\bar{t}|^{\alpha /2}\right)^p
\end{equation*}
and obtain the H\"older continuity  by another application of Kolmogorov's criterion.

\subsubsection{Small ball estimate for $\gamma(t,x)$} \label{subsubsec:small-ball-gamma-fourth}
Recall the regions associated with Neumann boundary conditions
\begin{equation*}
        L_\delta = [\delta,T] \times [0,1].
\end{equation*}

We can also obtain the small ball estimate for $\gamma(t,x)$ as in Lemma \ref{smallballestimatestochasticheat}. For any $\varepsilon>0$ we have the lower bound 

\begin{equation} \label{lowerboundmalliavincovariance4thorder}
\begin{split}
\gamma(t,x) \ge \tfrac12 \mathcal{R}_{1}(\varepsilon;t,x)
- \mathcal{R}_{2}(\varepsilon;t,x),
\end{split}
\end{equation}
with
\begin{equation*}
\begin{split}
\mathcal{R}_1&:= \int_{t-\varepsilon}^t\; \int_0^1
\sigma(u(s,y))^2 H_{t-s}(x,y)^2\,dy\,ds,\\
\mathcal{R}_2 &:= \int_{t-\varepsilon}^t\;\!\int_0^1 \left( \int_{s}^{t}\int_{0}^{1}H_{t-\theta}(x,r)m(r,\theta)D_{s,y}u(\theta,r) drd \theta \right. \\
&\qquad \qquad+ \left. \int_{s}^{t}\int_{0}^{1} H_{t-\theta}(x,r) \hat{m}(r,\theta)D_{s,y}u(\theta,r) W(dr,d \theta)\right)^2 dy\,ds.
\end{split}
\end{equation*}
Uniform ellipticity and a lower bound for the $L^2$ notm of $H$ (see for instance (3.24) from \cite{AFK}) yield
\begin{equation*}
    \mathcal{R}_1\ge (C_\sigma)^{-2}\int_{t-\varepsilon}^t\;\int_0^1
H_{t-s}(x,y)^2\,dy\,ds \geq (C_\sigma)^{-2} C \varepsilon^{\frac{3}{4}} .
\end{equation*}
We set $c_2=(C_\sigma)^{-2} C>0. $
We can also bound the moments of $\mathcal{R}_2$ and show that for any $q\ge1$, there exists some $C_q > 0$ such that
\begin{equation} \label{eq:r2-estimate-c2}
    \sup_{(t,x)\in L_\delta}
\E\!\left[\clr_2(\varepsilon;t,x)^q\right]
\le C_{q,T,b,\sigma} \,\varepsilon^{5q/4}.
\end{equation}
We have 
\begin{equation}
 \clr_2(\varepsilon;t,x) \le 2\int_{t-\varepsilon}^t\int_0^1
 \clr_{2,1}(s,y;t,x)^2 \,dy\,ds 
+2\int_{t-\varepsilon}^t\int_0^1
 \clr_{2,2}(s,y;t,x)^2 \,dy\,ds, \label{eq:r2-r21-r224thorder}
\end{equation}
where
\begin{eqnarray*}
\clr_{2,1}(s,y;t,x)
&:=&
\int_s^t\int_0^1
H_{t-r}(x,z)\,
m(r,z)\,
D_{s,y}u(r,z)\,dz\,dr,
\nonumber\\
\clr_{2,2}(s,y;t,x)
&:=&
\int_s^t\int_0^1
H_{t-r}(x,z)\,
\hat{m}(r,z)\,
D_{s,y}u(r,z)\,
W(dz,dr).
\end{eqnarray*}
Using Corollary \ref{4thgreenbounds}, we bound
\begin{align*}
\clr_{2,1}(s,y;t,x)^2 &\le C_b
\left(
\int_s^t
\|H_{t-r}\|_{L^2([0,1])}^2\,dr
\right)
\left(
\int_s^t
\|D_{s,y}u(r,\cdot)\|_{L^2([0,1])}^2\,dr
\right) \\
&= C_b \left( \int_0^{t-s}
\|H_{\tau}\|_{L^2([0,1])}^2\,d\tau \right) \left(
\int_s^t
\|D_{s,y}u(r,\cdot)\|_{L^2([0,1])}^2\,dr
\right) \\
&\le C_{b,T}
\varepsilon^{3/4} \left(
\int_s^t
\|D_{s,y}u(r,\cdot)\|_{L^2([0,1])}^2\,dr
\right).
\end{align*}
Thus, 
\begin{equation*}
\begin{split}
    \int_{t-\varepsilon}^t\;\int_0^1
 \clr_{2,1}(s,y;t,x)^2 \,dy\,ds 
&\le
 C_{T,b}
\varepsilon^{3/4}
\int_{t-\varepsilon}^t
\int_{t-\varepsilon}^r
\int_0^1
\|D_{s,y}u(r,\cdot)\|_{L^2([0,1])}^2
\,dy\,ds\,dr,
\end{split}
\end{equation*}
where we have used Fubini's theorem. Moreover, using Minkowski's integral inequality and Lemma \ref{lmalden}
\begin{equation*} 
\begin{split}
&\sup_{r\le T}
\E\left[
\left(
\int_0^r\int_0^1 
\|D_{s,y}u(r,\cdot)\|^2_{L^2([0,1])}  dy\,ds
\right)^q
\right]
\leq \sup_{r\le T}
\left(
\int_0^r\int_0^1 \int_0^1 \E\Big[
|D_{s,y}u(r,z)|^{2q}\Big]^{\frac{1}{q}} dz dy\,ds
\right)^q \\
& \leq \sup_{r\le T} \left(
\int_0^r\int_0^1 \int_0^1 \E\Big[
|D_{s,y}u(r,z)|^{2q}\Big]^{\frac{1}{q}} dy\,ds \right)^{q} \leq \left(
\int_0^T \int_0^1 \int_0^1 \frac{1}{(r-s)^{\frac{1}{2}}} dz dy\,ds
\right)^q \leq C_{T,q} .
\end{split}
\end{equation*}
Since $r-s\le\varepsilon$, we obtain, using again Minkowski integral inequality,
\begin{equation} \label{1eq:R21-estimate}
\begin{split}
\E \left( \int_{t-\varepsilon}^t\;\int_0^1
 \clr_{2,1}(s,y;t,x)^2 \,dy\,ds \right)^q
&\le C_{T,b,q}\varepsilon^{3q/4}
\E \left( \int_{t-\varepsilon}^t
\int_{t-\varepsilon}^r
\int_0^1
\|D_{s,y}u(r,\cdot)\|_{L^2([0,1])}^2
\,dy\,ds\,dr \right)^q \\ 
&\le C_{T,b,q}\varepsilon^{3q/4}
\left( \int_{t-\varepsilon}^t
C_{T,q} dr \right)^q \leq C_{T,b,q}\varepsilon^{7q/4} \le C_{T,b,q} \varepsilon^{5q/4}.
\end{split}
\end{equation}
To estimate $\clr_{2,2}$, proceeding as in Lemma \ref{smallballestimatestochasticheat} we have
\begin{align}
\E &\left( \int_{t-\varepsilon}^t\;\int_0^1
 \clr_{2,2}(s,y;t,x)^2 \,dy\,ds \right)^q  \nonumber\\
 &\le C_{\sigma,q} \left(\int_{t-\varepsilon}^t \int_0^1  \int_s^t \int_0^1 \E \left[H_{t-r}(x,z)^{2q} D_{s,y}u(r,z)^{2q}\right]^{\frac{1}{q}} dz dr dy ds  \right)^{q} \nonumber \\
 &\le C_{\sigma,q} \left(\int_{t-\varepsilon}^{t} \int_0^1  \int_s^t \int_0^1 H_{t-r}(x,z)^2 \frac{1}{(r-s)^{\frac{1}{2}}} dz dr dy ds  \right)^q, \label{eq:6789p}
\end{align}
where the last line follows from Lemma \ref{lmalden}.

Then we get
\begin{equation*}
\begin{split}
    &\int_{t-\varepsilon}^{t} \int_0^1  \int_s^t \int_0^1 H_{t-r}(x,z)^{2} \frac{1}{(r-s)^{\frac{1}{2}}} dz dr dy ds \\
    &\qquad\le  \int_{t-\varepsilon}^{t} \int_0^1 H_{t-r}(x,z)^{2} \left( \int_{t-\varepsilon}^r  \int_0^1  \frac{1}{(r-s)^{\frac{1}{2}}}  dy ds \right)dz dr \\
    &\qquad\le C_{T} \int_{t-\varepsilon}^{t} \int_0^1 H_{t-r}(x,z)^{2} \sqrt{r-(t-\varepsilon)} dz dr \\
    &\qquad\leq C_{T} \sqrt{\varepsilon}  \int_{t-\varepsilon}^{t} \int_0^1 H_{t-r}(x,z)^{2} dz dr \\
    &\qquad\leq C_{T} \sqrt{\varepsilon}  \int_{t-\varepsilon}^{t} (t-r)^{-\frac{1}{4}} dr= C_{T}\varepsilon^{\frac{5}{4}}
\end{split}
\end{equation*}
which, combined with \eqref{eq:6789p}, leads to
\begin{equation} \label{eq:r22-estimate-2}
    \E \left( \int_{t-\varepsilon}^t\;\int_0^1
 \clr_{2,2}(s,y;t,x)^2 \,dy\,ds \right)^q\leq C_{T,\sigma,q} \varepsilon^{5q/4}.
\end{equation}
Combining \eqref{eq:r2-r21-r224thorder}, \eqref{1eq:R21-estimate}, and \eqref{eq:r22-estimate-2}, we get that 
$$ \E[\clr_2(\varepsilon;t,x) ^q] \leq C_{T,q,b,\sigma} \varepsilon^{5q/4} $$ for a constant $C_{T,q,b,\sigma}$ that does not depend on $(t,x)$. By Markov's inequality,
$$
\PP( \clr_2 > \varepsilon^{\frac{3}{4}}) \leq \frac{\E[\clr_2^q]}{\varepsilon^{\frac{3q}{4}}} \leq C_{T,q,b,\sigma} \varepsilon^{\frac{q}{2}}.
$$
Now for the constant $c_2$ introduced above \eqref{eq:r2-estimate-c2} we get from the previous equation that for all $\varepsilon < r(\delta)$,
$$
\PP\left( \gamma(t,x) \le  \frac{c_{2}}{4}\varepsilon^{\frac{3}{4}} \right) \le  \PP\left( \frac{1}{2} \clr_1 - \clr_2 \le  \frac{c_{2}}{4}\varepsilon^{\frac{3}{4}} \right) \leq \PP\left(\clr_2 \ge \frac{c_{2}}{4} \varepsilon^{\frac{3}{4}}\right) \leq C_{T,q,b,\sigma} \frac{4^{q}}{c_{2}^{q}}\varepsilon^{\frac{q}{2}} .
$$
This lets us conclude that there exists a constant $C_{T,q,b,\sigma}>0$ not depending on $t,x, $ such that, for any $0<y< \frac{c_2}{4}r(\delta)^{3/4}$, setting $ \varepsilon(y) := \left(\frac{4 y}{c_2}\right)^{\frac{4}{3}},$
\begin{multline*}
\sup_{(t,x) \in L_{\delta}} \PP( \gamma(t,x) \leq y) \le \sup_{(t,x) \in L_{\delta}} \PP(  J_{\varepsilon(y)}(t,x) \leq y) 
\le  \sup_{(t,x) \in L_{\delta}} \PP(  J_{\varepsilon(y)}(t,x) \leq \frac{c_2}{4} \varepsilon(y)^{3/4}) \le C_{T,q,b,\sigma} \varepsilon(y)^{q/2}\\= C_{T,q,b,\sigma} y^{2q/3}.
\end{multline*}

\subsubsection{Deducing condition (iii) and existence of density for supremum over compact subsets} \label{subsubsec:compact-subsets-fourth}
Using the H\"older continuity and the small ball estimate, the proof of Proposition \ref{prop:Sdelta-K} holds verbatim for sets $K \subseteq (0,T]\times [0,1]$. This gives us that for any such compact set $K$
\[
\PP \Big(
\exists (t,x)\in K:
\gamma(t,x)=0
\Big)=0.
\]

As a result, Conditions (i), (ii) and (iii) are satisfied, so Theorem \ref{thm:nualart-criterion-existence} yields that $\phi_ K=\sup_{(s,y) \in K}u(s,y) \in \mathbb{D}^{1,2}$, and that it is absolutely continuous with respect to the Lebesgue measure.  The proof of the first part of Theorem \ref{thm:4th-order} is thereby complete.

\subsubsection{Upgrading the result to $[0,T]\times [0,1]$} \label{subsubsec:upgrading-whole-spacek-k>0}
Let $x^{*}$ be a maximizer of $u_0$. The setting is exactly the same as in Section \ref{subsubsec:k=0-whole-space} under Regime \ref{case:neumann}, and so it suffices to prove that
\[
    \PP \left(\sup_{(t,x) \in [0,T] \times [0,1]} u(t,x) = u_0(x^*)\right) = 0.
    \]
Letting $u_0(x^{*}) = m_0 := \max_{x\in[0,1]} u_0(x)$, we aim to show
$
\mathbb{P}
\left(
\sup_{t\in(0,T],\,x\in[0,1]} u(t,x) > m_0
\right)
= 1,
$
and as before, we will prove the stronger statement:
\begin{equation}
\mathbb{P}
\left(
u(t_n,x^{*}) > u_0(x^{*})
\ \text{i.o.\ as } t_n \downarrow 0
\right)
=1.
\end{equation}
Assumption \ref{ass:u0-fourth} is assumed to hold throughout this subsection.

Fixing $t>0$ small we write
\begin{equation} \label{eq:u-decomp-D-Z-R-2}
 u(t,x^{*}) - u_0(x^{*})
=
D(t) + Z(t)+R(t),   
\end{equation}
where
\begin{align*}
D(t)
&:= \int_0^1 H_t(x^{*},y)u_0(y)dy - u_0(x^{*})
+ \int_0^t \int_0^1 H_{t-s}(x^{*},y) b(u(s,y)) dy \, ds, \\
Z(t)
&:= \int_0^t \int_0^1
H_{t-s}(x^{*},y)\,
\sigma(u_0(y))\,
W(ds,dy), \\
R(t)
&:= \int_0^t \int_0^1
H_{t-s}(x^{*},y)\,
\big(\sigma(u(s,y))-\sigma(u_0(y))\big)\,
W(ds,dy).
\end{align*}

Then $Z(t)$ is centered Gaussian, and by using uniform ellipticity and again (3.24) from \cite{AFK},
\begin{align}
\sqrt{\mathrm{Var}(Z(t))}= \sqrt{
\int_0^t \int_0^1
H_{t-s}(x^{*},y)^2
\sigma(u_0(y))^2
\,dy\,ds } \ge (C_\sigma)^{-1} \sqrt{
\int_0^t \int_0^1
H_r(x^{*},y)^2
\,dy\,dr} \ge (C_\sigma)^{-1} t^{\frac{3}{8}}.
\end{align}
Therefore there exist constants $c_1,c_2>0$ such that for $t$ small enough,
\begin{equation}
\mathbb{P}\big(Z(t) \ge c_1 t^{3/8}\big) \ge c_2.
\end{equation}

We decompose
$
D(t) = D_1(t) + D_2(t)
$
with
$$
D_1(t)
:=
\int_0^1 H_t(x^{*},y)u_0(y)dy-u_0(x^{*}),\quad{}
D_2(t)
:=
\int_0^t \int_0^1 H_{t-s}(x^{*},y)b(u(s,y))dy\,ds .
$$
The term $D_2(t)$ can be estimated as in \eqref{eq:D2-minkowski} to give
\begin{align}
\E\left(D_2(t)^2 \right)^{\frac{1}{2}}
\le
C_{T,b}\int_0^t\int_0^1
|H_{t-s}(x^{*},y)|
\,dy\,ds
\le C_{T,b} t . \label{eq:D2-fourth-order}
\end{align}

Turning now to the term $D_1(t)$, we have 
\begin{align}
-\,D_1(t)
&= \int_0^1 H_t(x^{*},y)\,\big(u_0(x^{*})-u_0(y)\big)\,dy \notag \\
&= \int_{|y-x^{*}|\le r_0} H_t(x^{*},y)\,\big(u_0(x^{*})-u_0(y)\big)\,dy
\;+\;
\int_{\{|y-x^{*}|> r_0\}\cap\{ 0<y<1 \}} H_t(x^{*},y)\,\big(u_0(x^{*})-u_0(y)\big)\,dy ,\nonumber\\
\label{eq:D1-split-kappa>0}
\end{align}
where $r_0$ is as in Assumption \ref{ass:u0-fourth} and $r_0<\frac{1}{3}$. Denoting
$$I_{\mathrm{near}}(t) := \int_{|y-x^{*}|\le r_0} H_t(x^{*},y)\,\big(u_0(x^{*})-u_0(y)\big)\,dy,$$
and 
$$I_{\mathrm{far}}(t) := \int_{\{|y-x^{*}|> r_0\}\cap\{ 0<y<1 \}} H_t(x^{*},y)\,\big(u_0(x^{*})-u_0(y)\big)\,dy$$
For the near term, we use that Assumption \ref{ass:u0-fourth} holds, and thus the derivative of $u_0$ is $C^{\alpha}$ for some $\alpha > 1/2$ locally around $x^{*}$. Note that $u_0'(x^{*})=0$. Indeed, if $x^* \in (0,1)$, this follows by the fact that it is an interior point and that $x^*$ is a maximizer. If $x^* = 0$ or $1$, then it follows by Assumption \ref{ass:u0-fourth}. Hence, 
\begin{align}
&|I_{\mathrm{near}}(t)|= \left|\int_{|y-x^{*}|\le r_0} H_t(x^{*},y)\,\int_{y}^{x^{*}}u_0'(z)dz dy \right| \\
&\qquad\le \int_{|y-x^{*}|\le r_0} |H_t(x^{*},y)|\,\int_{\min\{y,x^*\}}^{\max\{y,x^*\}}\left|u_0'(z)-u_0'(x^{*}) \right| dz \,dy \\
&\qquad\le C_\alpha \int_{|y-x^{*}|\le r_0} |H_t(x^{*},y)|\,\int_{\min\{y,x^*\}}^{\max\{y,x^*\}}|z-x^{*}|^{\alpha} dz \,dy \\
&\qquad\leq 
C_\alpha \int_{|y-x^{*}|\le r_0} |H_t(x^{*},y)|\,|x^{*}-y|^{\alpha+1} \,dy \leq C_{T,\rho,\alpha,r_0} t^{(\alpha+1)/4}.
\label{eq:12345}
\end{align}
In the last inequality we used Lemma \ref{integralboundholderforH}.

For the far term, using $0\le u_0(x^{*})-u_0(y)\le 2\|u_0\|_\infty$ and Lemma \ref{pointwiseboundonH} we get 
\begin{align}
&I_{\mathrm{far}}(t) \le 2\|u_0\|_\infty \int_{\{|y-x^{*}|> r_0\}\cap\{ 0<y<1 \}} |H_t(x^*,y)|\,dy \nonumber \\
&\le C_{T,\rho} \|u_0\|_\infty \int_{\{|y-x^{*}|> r_0\}\cap\{ 0<y<1 \}} 
t^{-1/4}
\left(
e^{-c|x^*-y|^{4/3}/t^{1/3}}
+
e^{-c\,m(x^*,y)^{4/3}/t^{1/3}}
\right)\,dy \label{eq:Ifar-1}
\end{align}
We estimate the two integrals separately. First, since $\alpha > -1$,
\begin{align}
   \int_{\{|y-x^{*}|> r_0\}\cap\{ 0<y<1 \}} 
t^{-1/4}
e^{-c|x^*-y|^{4/3}/t^{1/3}}
\,dy &\le \int_{|z|>r_0} t^{-1/4}e^{-c|z|^{4/3}/t^{1/3}}\,dz \nonumber  \\
&= \int_{|u|>r_0 t^{-1/4}} e^{-c|u|^{4/3}}\,du \leq C_{T,r_0} e^{-c_1 r_0^{4/3}/t^{1/3}} \leq C_{T,r_0,\alpha} t^{\frac{\alpha+1}{4}} \label{eq:Ifar-2}
\end{align}
The first inequality follows by the change of variables $z=y-x^*$. The second by letting $z=t^{1/4}u$, and hence $dz=t^{1/4}du$ and
$
\frac{|z|^{4/3}}{t^{1/3}}=|u|^{4/3}.
$

For the second term, we have that on the set $\{|y-x^*|>r_0\}$, $m(x^*,y)\ge r_0$. Therefore
\begin{align} 
    \int_{\{|y-x^{*}|> r_0\}\cap\{ 0<y<1 \}} 
t^{-1/4}
e^{-c\,m(x^*,y)^{4/3}/t^{1/3}} \,dy  \nonumber
&\le
\int_{\{|y-x^{*}|> r_0\}\cap\{ 0<y<1 \}} t^{-1/4}e^{-c r_0^{4/3}/t^{1/3}}\,dy \\
&\le
t^{-1/4}e^{-c r_0^{4/3}/t^{1/3}} \leq C_{T,r_0, \alpha} t^{\frac{\alpha+1}{4}}. \label{eq:Ifar-3}
\end{align}
Combining \eqref{eq:Ifar-1}, \eqref{eq:Ifar-2}, and \eqref{eq:Ifar-3},
\begin{equation} \label{IfarboundforH}
I_{\mathrm{far}}(t)\le 
C_{T,\rho,r_0} \|u_0\|_\infty t^{\frac{\alpha+1}{4}}.
\end{equation}

From \eqref{eq:12345} and \eqref{IfarboundforH} we obtain 
\[
|D_1(t)|
\le
C_{T,\rho,r_0,\alpha} t^{\frac{\alpha+1}{4}}.
\]
Consequently,  since $D_1(t)$ is deterministic and from \eqref{eq:D2-fourth-order}, we have, for small $t$,
\begin{equation*}
\E(D(t)^2)^{1/2}
\le |D_1(t)|+\E(D_2(t)^2)^{1/2}
\le C_{T,r_0,\rho,u_0,b,\alpha} t^{\frac{\alpha+1}{4}}
\end{equation*}
Applying Chebyshev's inequality with threshold \(\frac14 c_1 t^{3/8}\), we obtain, since $\alpha > \frac{1}{2}$
\begin{align*}
\PP\Bigl(|D(t)|\ge \tfrac14 c_1 t^{3/8}\Bigr)
&\le
\frac{\E[D(t)^2]}{\bigl(\frac14 c_1 t^{3/8}\bigr)^2} \leq 
C_{T,r_0,\rho,u_0,b,\alpha}\,\frac{t^{\frac{\alpha+1}{2}}}{t^{3/4}}
=
C_{T,r_0,\rho,u_0,b,\alpha}\, t^{\frac{\alpha}{2}-\frac{1}{4}} \rightarrow 0
\end{align*}

We proceed with estimating the term $R(t)$ as in \eqref{eq:Rt-123}. We have 
\begin{align}
 \mathbb{E}|R(t)|^2
&\le
\Lip(\sigma)^2
\int_0^t \int_0^1
H_{t-s}(x^{*},y)^2
\,
\mathbb{E}|u(s,y)-u_0(y)|^2
\,dy\,ds \nonumber \\
&\le
\Lip(\sigma)^2
\int_0^t \int_{|y-x^{*}|<\frac{r_0}{2}}
H_{t-s}(x^{*},y)^2
\,
\mathbb{E}|u(s,y)-u_0(y)|^2
\,dy\,ds \nonumber \\
&\qquad+
\Lip(\sigma)^2
\int_0^t \int_{|y-x^{*}|\geq \frac{r_0}{2}}
H_{t-s}(x^{*},y)^2
\,
\mathbb{E}|u(s,y)-u_0(y)|^2
\,dy\,ds \label{eq:Rt-1234thorder}
\end{align}
For the last term in \eqref{eq:Rt-1234thorder}, by using the bound $\sup_{s \in [0,T]} \sup_{ y \in [0,1]} \mathbb{E}(|u(s,y)|^2) \le C_T$ and Lemma \ref{pointwiseboundonH}
\begin{align}
&\int_0^t \int_{|y-x^{*}|\ge \frac{r_0}{2}}
H_{t-s}(x^{*},y)^2
\,\mathbb{E}\lvert u(s,y)-u_0(y)\rvert^2
\,dy\,ds  \nonumber \\
&\le C_T \int_0^t \int_{\{|y-x^{*}|> \frac{r_0}{2}\}\cap\{0<y<1\}} (t-s)^{-1/2}
\left(
e^{-c |y-x^*|^{4/3}/(t-s)^{1/3}}
+
e^{-c\,\frac{\min\{x^*+y,\; 2-x^*-y\}^{4/3}}{(t-s)^{1/3}}}
\right)
\,dy\,ds
\notag\\
&\le C_T \int_0^t \int_{\{|y-x^{*}|> \frac{r_0}{2}\}\cap\{0<y<1\}} (t-s)^{-1/2}
e^{-c r_0^{4/3}/(t-s)^{1/3}}
\,dy\,ds.
\label{eq:Rt-far4thorder}
\end{align}
The last line follows since, for \(0<x^*,y<1\),
$
x^*+y\ge |x^*-y|,
2-x^*-y\ge |x^*-y|.
$
Then, using the estimate $e^{-x} \le C_\beta x^{-\beta}$ for $x > 0$ and $\beta = 3/2$, we get
\begin{align}
    \int_0^t \int_{|y-x^{*}|\ge \frac{r_0}{2}}
H_{t-s}(x^{*},y)^2
\,\mathbb{E}\lvert u(s,y)-u_0(y)\rvert^2
\,dy\,ds &\le C_T \int_0^t  (t-s)^{-1/2}
e^{-c r_0^{4/3}/(t-s)^{1/3}} ds \nonumber \\
&\le C_{T} \int_0^t (t-s)^{-1/2} \frac{(t-s)^{1/2}}{r_0^{2}} ds \nonumber \\
&\le C_{T,r_0} t. \nonumber
\end{align}

For $|y-x^{*}|< \frac{r_0}{2}$ we claim that $\mathbb{E}|u(s,y)-u_0(y)|^2 \leq C_T s^{1/2}$. Indeed, first decompose it as
\begin{equation} \label{eq:Rt-1004thorder}
\begin{split}
u(s,y)-u_0(y)&=\int_{|z-y|<\frac{r_0}{2}} H_{s}(y,z)(u_0(z)-u_0(y))dz  + \int_{|z-y| \geq \frac{r_0}{2}} H_{s}(y,z)(u_0(z)-u_0(y))dz \\
&+\int_0^s \int_0^1 H_{s-r}(y,z)b(u(r,z))dz dr 
+\int_0^s \int_0^1 H_{s-r}(y,z)\sigma(u(r,z)) dW(z,r).
\end{split}
\end{equation}
Then, for the stochastic term
\begin{equation*}
\begin{split}
   \mathbb{E} \left(\int_0^s \int_0^1 H_{s-r}(y,z)\sigma(u(r,z)) dW(z,r) \right)^2 &= \mathbb{E} \left(\int_0^s \int_0^1 H_{s-r}^2(y,z)\sigma^2(u(r,z)) dz dr \right) \\
   &\leq C_\sigma \int_0^s \int_0^1 H_{s-r}^2(y,z) dz dr  \leq C_{T,\sigma} s^{3/4}. 
\end{split}
\end{equation*}
For the first term we have 
\begin{equation}
\begin{split}
 \left|\int_{|z-y|<\frac{r_0}{2}} H_{s}(y,z)(u_0(z)-u_0(y))dz \right| &\leq \int_{|z-y|<\frac{r_0}{2}} |H_{s}(y,z)| |u_0(z) -u_0(y)| dz \\
 &\leq C\int_{|z-y|<\frac{r_0}{2}}  |H_{s}(y,z)| |z-y| dz \leq C_{T} s^{1/4},
\end{split}
\end{equation}
which follows from a direct modification of Lemma \eqref{integralboundholderforH} and from $u_0$ being Lipschitz in the interval $(x^{*}-r_0, x^{*}+r_0)$.
The other terms in \eqref{eq:Rt-1004thorder} can be estimated as the $I_{\mathrm{far}}$ part of $D_1$ and as $D_2$ above, verifying that $\mathbb{E}|u(s,y)-u_0(y)|^2 \leq C_{T,\sigma,b} s^{1/2}$.

Plugging this in the term in the second line of \eqref{eq:Rt-1234thorder}, 
\begin{equation} \label{eq:Rt-near4thorder}
\begin{split}
&\int_0^t \int_{|y-x^{*}|<\frac{r_0}{2}}
H_{t-s}(x^{*},y)^2
\,
\mathbb{E}|u(s,y)-u_0(y)|^2
\,dy\,ds \\
&\leq C_{T,\sigma,b} \int_0^t \int_{|y-x^{*}|<\frac{r_0}{2}}
H_{t-s}(x^{*},y)^2 s^{1/2}
\,dy\,ds \leq C_{T,\sigma,b} \int_0^t s^{1/2} \int_0^1
H_{t-s}(x^{*},y)^2 
\,dy  \,ds \\
&\leq C_{T,\sigma,b} \int_0^t (t-s)^{-1/4} s^{1/2} \,ds= C_{T,\sigma,b} t^{5/4}.
\end{split}
\end{equation}
We used Lemma \ref{4thgreenbounds} and standard beta function calculations (cf. \eqref{eq:beta-example-2}). Hence, combining \eqref{eq:Rt-1234thorder}, \eqref{eq:Rt-far4thorder}, and \eqref{eq:Rt-near4thorder}, we get that
\begin{equation}
     \mathbb{E}|R(t)|^2 \le C_{T,r_0,b,\sigma} t.
\end{equation}

 Hence, by Chebyshev inequality,
\begin{equation}
\mathbb{P}
\big(
|R(t)| \ge \tfrac14 c_1 t^{3/8}
\big)
\leq C_{T,\sigma} t^{1/4}.
\end{equation}

Finally, combining the bounds for $Z(t)$, $R(t)$ and $D(t)$, we have that for $t$ sufficiently small,
\[
\mathbb{P}
\Big(
Z(t)\ge c_1 t^{3/8},\,
|R(t)|\le\tfrac14 c_1 t^{3/8},\,
|D(t)|\le\tfrac14 c_1 t^{3/8}
\Big)
\ge \frac{c_2}{2}>0.
\]
Therefore from \eqref{eq:u-decomp-D-Z-R-2},
there exist $p_*>0$ and $t_0>0$ such that
\[
\mathbb{P}
\big(
u(t,x^{*}) > u_0(x^{*})
\big)
\ge p_*,
\qquad
\forall t\in(0,t_0].
\]
By appealing to the 0-1 law for the germ $\sigma$-algebra $\mathcal{F}_{0+}$ as in Proposition \ref{dirichletneumannmaximizeratinitial} and \eqref{eq:prob-i.o.-event}, we conclude that $\mathbb{P}
\left(
u(t_n,x^{*}) > u_0(x^{*})
\ \text{i.o.\ as } t_n \downarrow 0
\right)
=1$ and thus 
$\PP \left(\sup_{(t,x) \in [0,T] \times [0,1]} u(t,x) = u_0(x^*)\right) = 0$. The proof of the second part of Theorem \ref{thm:4th-order} is thereby complete.

\section*{Acknowledgements}
The authors are grateful to Robert Dalang and Lluís Quer-Sardanyons for fruitful discussions and helpful communication. The work of AS and PZ was Funded by the Deutsche Forschungsgemeinschaft (DFG, German Research Foundation) under Germany's Excellence Strategy EXC 2044/2 –390685587, Mathematics Münster: Dynamics–Geometry–Structure.



\bibliographystyle{acm}
\bibliography{mybibliography}

\end{document}